\numberwithin{equation}{section}
\numberwithin{figure}{section}
\theoremstyle{plain}
\newtheorem{thm}{\protect\theoremname}[section]
  \theoremstyle{plain}
  \newtheorem{prop}[thm]{\protect\propositionname}
  \theoremstyle{plain}
  \newtheorem{lem}[thm]{\protect\lemmaname}
  \theoremstyle{plain}
  \newtheorem{cor}[thm]{\protect\corollaryname}
\def\R{\mathbb R}
\def\N{\mathbb N}
\def\wq{\infty}
\def\cal{\mathcal}
\def\al{\alpha}
\def\ga{\gamma}
\def\de{\delta}
\def\ep{\epsilon}
\def\la{\lambda}
\def\si{\sigma}
\def\var{\varphi}
\def\om{\omega}
\def\na{\nabla}
\def\Om{\Omega}
\def\De{\Delta}
\def\pa{\partial}
\newcommand{\dist}{{\rm dist}}
\newcommand{\lv}{\left\Vert}
\newcommand{\rv}{\right\Vert}
\newcommand\esssup{{\rm \,esssup\,}}
\numberwithin{equation}{section}
\theoremstyle{definition}
  \providecommand{\corollaryname}{Corollary}
  \providecommand{\lemmaname}{Lemma}
  \providecommand{\propositionname}{Proposition}
\providecommand{\theoremname}{Theorem}
\begin{document}

\title{\title[Infinitely Many Solutions to  Quasilinear Equations] IInfinitely
many solutions for quasilinear elliptic equations involving double
critical terms and boundary geometry}

\author{Chun-Hua Wang, Chang-Lin Xiang}

\date{\today}

\address{[Chun-Hua Wang] School of Mathematics and Statistics, Central China
Normal University, Wuhan 430079, P. R. China. }

\email{[Chun-Hua Wang] chunhuawang@mail.ccnu.edu.cn}

\address{[Chang-Lin Xiang] Department of Mathematics and Statistics, P.O.
Box 35 (MaD) FI-40014 University of Jyv\"askyl\"a, Finland}

\email{[Chang-Lin Xiang] changlin.c.xiang@jyu.fi}
\begin{abstract}
Let $1<p<N$, $p^{*}=Np/(N-p)$, $0<s<p$, $p^{*}(s)=(N-s)p/(N-p)$,
and $\Om\in C^{1}$ be a bounded domain in $\R^{N}$ with $0\in\bar{\Om}.$
In this paper, we study the following problem
\[
\begin{cases}
-\Delta_{p}u=\mu|u|^{p^{*}-2}u+\frac{|u|^{p^{*}(s)-2}u}{|x|^{s}}+a(x)|u|^{p-2}u, & \text{in }\Om,\\
u=0, & \text{on }\pa\Om,
\end{cases}
\]
where $\mu\ge0$ is a constant, $\De_{p}$ is the $p$-Laplacian operator
and $a\in C^{1}(\bar{\Om})$. By an approximation argument, we prove
that if $N>p^{2}+p,a(0)>0$ and $\Omega$ satisfies some geometry
conditions if $0\in\partial\Omega$, say, all the principle curvatures
of $\partial\Omega$ at $0$ are negative, then the above problem
has infinitely many solutions.
\end{abstract}

\maketitle
{\small
\noindent {\bf Keywords:} Quasilinear elliptic equations; Double critical terms; Boundary geometry condition;  Infinitely many solutions; Approximation argument.
\smallskip
\newline\noindent {\bf 2010 Mathematics Subject Classification: } 35J60 $\cdot$ 35B33

\tableofcontents{}

\section{Introduction and main results}

Let $1<p<N$, $p^{*}=Np/(N-p)$, $0<s<p$, $p^{*}(s)=(N-s)p/(N-p)$,
and $\Om\in C^{1}$ be an open bounded domain in $\R^{N}$ with $0\in\bar{\Om}.$
In this paper, we study the following quasilinear elliptic equations
\begin{equation}
\begin{cases}
-\Delta_{p}u=\mu|u|^{p^{*}-2}u+\frac{|u|^{p^{*}(s)-2}u}{|x|^{s}}+a(x)|u|^{p-2}u, & \text{in }\Om,\\
u=0, & \text{on }\pa\Om,
\end{cases}\label{1.1}
\end{equation}
where $\mu\ge0$ is a constant,
\[
\Delta_{p}u=\sum_{i=1}^{N}\partial_{x_{i}}(|\nabla u|^{p-2}\partial_{x_{i}}u),\quad\nabla u=(\partial_{x_{1}}u,\cdots,\partial_{x_{N}}u)
\]
is the $p$-Laplacian operator and $a\in C^{1}(\bar{\Om})$.

The functional corresponding to equation (\ref{1.1}) is
\begin{equation}
I(u)=\frac{1}{p}\int_{\Omega}\big(|\nabla u|^{p}-a(x)|u|^{p}\big)dx-\frac{\mu}{p^{*}}\int_{\Omega}|u|^{p^{*}}dx-\frac{1}{p^{*}(s)}\int_{\Omega}\frac{|u|^{p^{*}(s)}}{|x|^{s}}dx,\label{1.2}
\end{equation}
for $u\in W_{0}^{1,p}(\Om)$. All of the integrals in energy functional
$I$ are well defined, due to the Sobolev inequality
\begin{eqnarray*}
C\left(\int_{\mathbb{R}^{N}}|\varphi|^{p^{*}}dx\right)^{\frac{p}{p^{*}}}\leq\int_{\mathbb{R}^{N}}|\nabla\varphi|^{p}dx, &  & \forall\:\varphi\in W_{0}^{1,p}(\Om),
\end{eqnarray*}
for $C=C(N,p)>0$, and due to the Caffarelli-Kohn-Nirenberg inequality
(see \cite{Caffarelli-Kohn-Nirenberg1984})
\begin{eqnarray*}
C\left(\int_{\Omega}\frac{|\var|^{p^{*}(s)}}{|x|^{s}}dx\right)^{\frac{p}{p^{*}(s)}}\leq\int_{\mathbb{R}^{N}}|\nabla\varphi|^{p}dx, &  & \forall\:\varphi\in W_{0}^{1,p}(\Om),
\end{eqnarray*}
for $C=C(N,p,s)>0$.

Since the pioneer work of Br\'ezis and Nirenberg \cite{BrezisNirenberg1983},
there are enormous results on semilinear problems e.g. \cite{BahriCoron1998,Brezis1985,Cappozzi1985,Cerami1986,Coron1984,DevillanovaSolomini2002,Struwe1984}
and on quasilinear problems e.g. \cite{CaoPengYan2012,CingolaniVannella2009,CingolaniVannella2009-2,DegiovanniLancelotti2009,Egnell1988,GhoussoubYuan2000,GueddaVeron1989,Papageorgiou2007,ZhuXP1988}
with Sobolev exponents.

Without the presence of the Hardy term $|x|^{-s}|u|^{p^{*}(s)-2}u$
in equation (\ref{1.1}), Devillanova and Solimini \cite{DevillanovaSolomini2002}
considered equation (\ref{1.1}) in the semilinear case ($p=2$).
With the assumptions that $\mu>0$ and $a\equiv\la$ in $\Om$ for
some constant $\la>0$, they proved the existence of infinitely many
solutions to equation (\ref{1.1}) if $N>6$. Then Cao, Peng and Yan
\cite{CaoPengYan2012} generalized their result to the quasilinear
case, that is, $1<p<N$. Under the same assumptions on $\mu$ and
$a$ as that of Devillanova and Solimini \cite{DevillanovaSolomini2002},
they proved the existence of infinitely many solutions to equation
(\ref{1.1}) if $N>p^{2}+p$.

In the presence of the Hardy term $|x|^{-s}|u|^{p^{*}(s)-2}u$ in
equation (\ref{1.1}), Yan and Yang \cite{YanYang2013} considered
equation (\ref{1.1}) in the semilinear case. Under the assumption
that $a(0)>0$ and the following geometry assumption imposed on $\Om$:
$\Om\in C^{3}$ and
\begin{equation}
\text{ all the principle curvatures of }\ensuremath{\pa\Om}\text{ at }0\text{ are negative }\text{if }0\in\pa\Om,\label{condition of YY}
\end{equation}
they proved the existence of infinitely many solutions for equation
(\ref{1.1}) if $N>6$.

So a natural problem is whether in the quasilinear case equation (\ref{1.1})
has infinitely many solutions. The functional $I$ defined by (\ref{1.2})
does not satisfy the Palais-Smale condition at large energy level.
So it is impossible to apply the mountain pass lemma \cite{AmbrosettiRabinowitz1973}
directly to obtain the existence of infinitely many solutions for
equation (\ref{1.1}). In this paper, we follow the idea of Devillanova
and Solimini \cite{DevillanovaSolomini2002} to study the following
perturbed problem:
\begin{equation}
\begin{cases}
-\Delta_{p}u=\mu|u|^{p^{*}-2-\ep}u+\frac{|u|^{p^{*}(s)-2-\ep}u}{|x|^{s}}+a(x)|u|^{p-2}u, & \text{in }\Om,\\
u=0, & \text{on }\pa\Om,
\end{cases}\label{1.3}
\end{equation}
where $\ep>0$ is a small constant. See also \cite{CaoPengYan2012,CaoYan2010,YanYang2013}
for applications of the same idea. The functional corresponding to
equation (\ref{1.3}) is
\begin{equation}
I_{\ep}(u)=\frac{1}{p}\int_{\Omega}\big(|\nabla u|^{p}-a(x)|u|^{p}\big)dx-\frac{\mu}{p^{*}-\ep}\int_{\Omega}|u|^{p^{*}-\ep}dx-\frac{1}{p^{*}(s)-\ep}\int_{\Omega}\frac{|u|^{p^{*}(s)-\ep}}{|x|^{s}}dx,\label{1.4}
\end{equation}
for $u\in W_{0}^{1,p}(\Om)$.

Now $I_{\ep}$ is an even functional and satisfies the Palais-Smale
condition in all energy levels. It follows from the symmetric mountain
pass lemma \cite{AmbrosettiRabinowitz1973,Rabinowitz1986} that equation
(\ref{1.3}) has infinitely many solutions. See also \cite{GhoussoubYuan2000,ShenGuo1987}.
Precisely, for $\ep>0$ fixed, there are positive numbers $c_{\ep,l}$
and critical points $u_{\ep,l}$, $l=1,2,\cdots,$ such that
\begin{eqnarray*}
I(u_{\ep,l})=c_{\ep,l}\to\wq, &  & \text{as }l\to\wq.
\end{eqnarray*}
Moreover, for each $l\ge1$ fixed, the sequence $\{c_{\ep,l}\}_{\ep>0}$
is bounded with respect to $\ep$ and thus can be assumed to converge
to a limit $c_{l}$ as $\ep\to0$.

To obtain the existence of infinitely many solutions for equation
(\ref{1.1}), the first step is to investigate whether $u_{\ep,l}$
converges strongly in $W_{0}^{1,p}(\Om)$ as $\ep\to0$. That is,
we need to study the compactness of the set of solutions for equation
(\ref{1.3}) for all $\ep>0$ small. If $u_{\ep,l}$ is proved to
converge to some $u_{l}\in W_{0}^{1,p}(\Om)$ strongly in $W_{0}^{1,p}(\Om)$,
then the next step is to investigate whether $c_{l}\to\wq$ as $l\to\wq$.
If so, then we obtain infinitely many solutions for equation (\ref{1.1})
with arbitrarily large energy level.

Throughout the paper, we use $\|\cdot\|$ to denote the norm of $W_{0}^{1,p}(\Om)$.
We assume that $\Om\in C^{1}$ satisfies the following condition:
\begin{equation}
x\cdot\nu\le0\text{ in a neighborhood of \ensuremath{0}\,\ in }\pa\Om\text{ if }0\in\pa\Om,\label{condition of WX}
\end{equation}
where $\nu$ is the outward unit normal of $\pa\Om$. Our main result
in this paper is the following theorem.
\begin{thm}
\label{thm: main result} Suppose that $a(0)>0$ and $\Om\in C^{1}$
satisfies the condition (\ref{condition of WX}). If $N>p^{2}+p$,
then for any $u_{n}$ $(n=1,2,\cdots),$ which is a solution to equation
(\ref{1.3}) with $\ep=\ep_{n}\to0$, satisfying $||u_{n}||\le C$
for some constant $C$ independent of $n$, $u_{n}$ converges strongly
in \textup{$W_{0}^{1,p}(\Om)$ up to a subsequence as $n\to\wq$.}
\end{thm}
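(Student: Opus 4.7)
The plan is to follow the concentration-compactness/approximation framework developed by Devillanova--Solimini in the semilinear case and adapted to the $p$-Laplacian by Cao--Peng--Yan, extended here to accommodate the additional Hardy--Sobolev term and the purely $C^{1}$ boundary geometry. Since $\|u_{n}\|\le C$, along a subsequence $u_{n}\wto u$ weakly in $W_{0}^{1,p}(\Om)$, strongly in $L^{p}(\Om)$, and almost everywhere; a standard Boccardo--Murat monotonicity argument then yields $\na u_{n}\to\na u$ almost everywhere, so that $u$ is a weak solution of the limit equation (\ref{1.1}). The task is to upgrade this to strong convergence in $W_{0}^{1,p}(\Om)$.

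Set $v_{n}:=u_{n}-u$. A Brezis--Lieb-type splitting reduces the problem to showing $\na v_{n}\to 0$ in $L^{p}(\Om)$. Assume for contradiction this fails. Then by the second concentration-compactness principle adapted to both critical exponents, the defect measures $|\na v_{n}|^{p}\,dx$ concentrate on an at-most-finite set $\{x_{j}\}\subset\bar{\Om}$, together with possible extra concentration at $0$ coming from the Hardy--Sobolev term, and near each such point $v_{n}$, after a suitable rescaling, resembles a nontrivial bubble, namely a positive solution of the limit equation $-\De_{p}U=\mu|U|^{p^{*}-2}U$ on $\R^{N}$ or a half-space, or a Hardy--Sobolev extremal at $0$.

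The analytic heart of the proof is a local Pohozaev identity. Multiplying (\ref{1.3}) by $u_{n}$ and by $(x-x_{j})\cdot\na u_{n}$ and integrating over $B_{\rho}(x_{j})\cap\Om$, I would obtain an identity in which the leading bubble energy at $x_{j}$ must balance two contributions: (i) a term proportional to $a(x_{j})$ whose limit can only be evaluated once one knows higher integrability of the form $u_{n}\in L_{\rm loc}^{q}$ for some $q>p^{*}$, and (ii) a boundary term whose sign is controlled by $x\cdot\nu$ when $x_{j}\in\pa\Om$. The condition $N>p^{2}+p$ enters precisely as the threshold at which a Moser-type iteration, uniform in $\ep_{n}$, delivers this higher integrability; it is the quasilinear analog of $N>6$ in Devillanova--Solimini and the sharp bound in Cao--Peng--Yan. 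Establishing these uniform estimates and rigorously passing to the limit in the local Pohozaev identity is the main technical obstacle of the proof.

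Once the identity is justified, the contradiction is drawn case by case. An interior bubble at $x_{j}\in\Om$ would force $a(x_{j})\le 0$, which is incompatible with $a(0)>0$ when $x_{j}=0$ and with the positive sign of the bubble coefficient when $x_{j}\ne 0$; a bubble on the smooth part of $\pa\Om$ away from $0$ is excluded because only a half-bubble can form and the boundary Pohozaev term has the wrong sign; finally, concentration at $0\in\pa\Om$ is ruled out by hypothesis (\ref{condition of WX}), since $x\cdot\nu\le 0$ near $0$ makes the boundary contribution of the form $\int x\cdot\nu\,|\na u_{n}|^{p}\,d\Si$ non-positive and therefore incompatible with a nontrivial bubble energy. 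All concentration scenarios are thus excluded, so $v_{n}\to 0$ strongly in $W_{0}^{1,p}(\Om)$, proving the theorem. A minor subtlety worth flagging is that since (\ref{condition of WX}) only involves $C^{1}$ regularity of $\pa\Om$, the boundary term must be treated purely through the sign hypothesis rather than through the finer curvature expansion used in \cite{YanYang2013}.
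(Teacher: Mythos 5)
Your overall strategy (global compactness decomposition into bubbles, then a local Pohozaev identity to exclude each concentration scenario) is the same family of argument as the paper's, but as written it has two concrete gaps that would make the contradiction step fail.

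First, your mechanism for excluding concentration at a point $x^{*}\ne 0$ is wrong. You argue that an interior bubble at $x_{j}$ forces $a(x_{j})\le 0$, but the hypothesis only gives $a(0)>0$; away from the origin $a$ may well be negative, so no contradiction results. This is exactly where the present problem differs from Devillanova--Solimini and Cao--Peng--Yan, where $a\equiv\la>0$ everywhere. The paper instead extracts positivity from the Hardy--Sobolev term: taking the Pohozaev center $x_{0}=x_{n}$ (or a nearby point outside $\Om$ when the ball meets $\pa\Om$), the term $\frac{s}{p_{n}(s)}\int_{B_{n}}\frac{|u_{n}|^{p_{n}(s)}}{|x|^{2+s}}(x_{0}\cdot x)\,dx$ is bounded below by $c\int_{B_{n}}|u_{n}|^{p_{n}(s)}dx\ge c'\la_{n}^{-N+p_{n}(s)(N-p)/p}$ because $x_{0}\cdot x\ge\frac{1}{4}|x^{*}|^{2}>0$ there, and this dominates the boundary terms when $N>p^{2}+p$ and $s<p$. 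Without this ingredient your case analysis cannot close.

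Second, the quantitative side is missing, and your attribution of the role of $N>p^{2}+p$ to a Moser iteration is not where it actually enters. The paper applies the Pohozaev identity not on a fixed ball $B_{\rho}(x_{j})$ but on a shrinking ball $B_{t_{n}\la_{n}^{-1/p}}(x_{n})$ around the \emph{slowest} bubble, with $t_{n}$ chosen by averaging over the safe region $\mathcal{A}_{n}^{3}$ so that the sphere integrals are bounded by $C\la_{n}^{(1-N)/p}$; this in turn rests on the decomposition norms $\|\cdot\|_{\ast,p_{1},p_{2},\la_{n}}$ of Section 2 and the safe-region estimates of Section 3. The dimension restriction then appears in the final comparison $\la_{n}^{-p}\le C\la_{n}^{(p-N)/p}$ (and in $\la_{n}^{-N+\frac{N-p}{p}p_{n}(s)}\le C\la_{n}^{-p}$ in the second case), which is impossible precisely when $N>p^{2}+p$. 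On a fixed ball the boundary and lower-order volume terms cannot be compared to the bubble's contribution in this way (other bubbles and the weak limit $u_{0}$ pollute both), so the scale $\la_{n}^{-1/p}$ and the accompanying uniform integral estimates are not an optional technicality but the core of the proof; flagging them as ``the main technical obstacle'' without supplying them leaves the argument incomplete.
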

As an application of Theorem \ref{thm: main result}, we have the
following existence result for equation (\ref{1.1}).
\begin{thm}
\label{thm: infinitely many solutions} Suppose that $a(0)>0$ and
$\Om\in C^{1}$ satisfies the condition (\ref{condition of WX}).
If $N>p^{2}+p$, then equation (\ref{1.1}) has infinitely many solutions.
\end{thm}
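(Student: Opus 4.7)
The plan is to recover infinitely many solutions of (\ref{1.1}) as strong limits, along a sequence $\ep_{n}\to0$, of the critical points $u_{\ep,l}$ of the perturbed, subcritical functionals $I_{\ep}$, using Theorem \ref{thm: main result} as the central compactness input. I would proceed in three steps.

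First, I would apply the symmetric mountain pass theorem (Ambrosetti--Rabinowitz / Rabinowitz \cite{AmbrosettiRabinowitz1973,Rabinowitz1986}) to each $I_{\ep}$ on $W_{0}^{1,p}(\Om)$. Since $I_{\ep}$ is even, of class $C^{1}$, has the mountain pass geometry at $0$ (thanks to $p<p^{*}(s)-\ep<p^{*}-\ep$ for $\ep$ small, together with the Sobolev and Caffarelli--Kohn--Nirenberg inequalities controlling the lower-order term $a(x)|u|^{p}$), and satisfies the Palais--Smale condition at every level (the perturbation being subcritical), one obtains a sequence of critical pairs $(u_{\ep,l},c_{\ep,l})$ defined through the standard genus-based min-max
\[
c_{\ep,l}=\inf_{A\in\Gamma_{l}}\sup_{u\in A}I_{\ep}(u),
\]
where $\Gamma_{l}$ is a class of symmetric compact sets of Krasnoselskii genus at least $l$ chosen independently of $\ep$. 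Using a fixed sequence of finite-dimensional symmetric subspaces $E_{l}\subset W_{0}^{1,p}(\Om)$ as test sets, one verifies the two uniform estimates: (i) $c_{\ep,l}\le M_{l}$ with $M_{l}$ independent of $\ep$ (this is where the subcriticality of $p^{*}-\ep$ and $p^{*}(s)-\ep$ makes $\sup_{E_{l}}I_{\ep}$ bounded uniformly in $\ep$), and (ii) $c_{\ep,l}\ge\psi(l)$ with $\psi(l)\to\wq$, again uniformly in $\ep$, by a standard $\Z_{2}$-pseudo-index argument combined with the continuous embeddings of $W_{0}^{1,p}(\Om)$ into subcritical Lebesgue spaces.

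Second, I would pass to the limit $\ep\to0$ with $l$ fixed. Combining $I_{\ep}(u_{\ep,l})=c_{\ep,l}\le M_{l}$ with the critical point relation $\langle I_{\ep}'(u_{\ep,l}),u_{\ep,l}\rangle=0$ and taking the linear combination $I_{\ep}(u_{\ep,l})-\tfrac{1}{p^{*}(s)-\ep}\langle I_{\ep}'(u_{\ep,l}),u_{\ep,l}\rangle$, one extracts from the two critical terms nonnegative contributions and controls the $a(x)|u|^{p}$ part by the Poincar\'e inequality together with $a\in C^{1}(\bar{\Om})$; this yields $\|u_{\ep,l}\|\le C_{l}$ uniformly in $\ep$. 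Theorem \ref{thm: main result} then applies to the family $\{u_{\ep_{n},l}\}_{n}$, giving, up to a subsequence, $u_{\ep_{n},l}\to u_{l}$ strongly in $W_{0}^{1,p}(\Om)$. Strong convergence allows passage to the limit in the weak formulation of (\ref{1.3}) (the two critical nonlinearities converge in the appropriate dual spaces), so $u_{l}$ solves (\ref{1.1}), and $I(u_{l})=\lim_{n\to\wq}c_{\ep_{n},l}=:c_{l}$.

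Third, the infinitude and nontriviality of $\{u_{l}\}$ follows from the uniform lower bound in step one: passing to the limit in $c_{\ep_{n},l}\ge\psi(l)$ yields $c_{l}\ge\psi(l)\to\wq$, so in particular $c_{l}>0$ for $l$ large, ruling out $u_{l}\equiv0$, and any given energy value is attained by only finitely many $u_{l}$. Hence $\{u_{l}\}_{l\ge 1}$ contains infinitely many distinct weak solutions of (\ref{1.1}).

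The single step I expect to be genuinely delicate is the uniform divergence $c_{\ep,l}\ge\psi(l)\to\wq$: one needs a min-max construction whose lower bound survives the perturbation $\ep\to0$ despite both nonlinearities approaching the critical Sobolev and Hardy--Sobolev exponents. Once this is in place, the strong convergence provided by Theorem \ref{thm: main result} makes the remaining passage to the limit and the identification of $u_{l}$ as a solution of (\ref{1.1}) essentially routine.
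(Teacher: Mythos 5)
Your proposal follows exactly the route the paper takes: the paper's own ``proof'' of Theorem \ref{thm: infinitely many solutions} consists of one line invoking Theorem \ref{thm: main result} together with the method of \cite{CaoPengYan2012}, which is precisely the three-step scheme you describe (symmetric mountain pass for $I_{\ep}$ with $\ep$-uniform upper and lower bounds on $c_{\ep,l}$, uniform bound on $\|u_{\ep,l}\|$ feeding into Theorem \ref{thm: main result}, then passage to the limit and divergence of $c_{l}$). Your outline is correct and in fact supplies more detail than the paper does, correctly isolating the $\ep$-uniform lower bound $c_{\ep,l}\ge\psi(l)\to\wq$ as the delicate point that in the literature is handled by the genus/min-max comparison of \cite{DevillanovaSolomini2002,CaoPengYan2012}.
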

We remark that Theorem \ref{thm: infinitely many solutions} generalizes
the result of Yan and Yang \cite{YanYang2013} from the semilinear
case of equation (\ref{1.1}) to the quasilinear case. To see this,
one only needs to observe that condition (\ref{condition of YY})
is just a special case of condition (\ref{condition of WX}). Indeed,
suppose that $\Om\in C^{3}$ and $0\in\pa\Om$ such that the condition
(\ref{condition of YY}) is satisfied. Then up to a rotation, we can
find a small constant $\de>0$ and a function $\var\in C^{3}$ such
that
\begin{eqnarray}
\Om\cap B_{\de}(0)=\{x\in\R^{N};x_{N}>\var(x^{\prime})\} & \text{ and } & \pa\Om\cap B_{\de}(0)=\{x\in\R^{N};x_{N}=\var(x^{\prime})\}.\label{eq: boundary representation}
\end{eqnarray}
Here we write $x=(x^{\prime},x_{N})\in\R^{N-1}\times\R$. By (\ref{condition of YY})
and (\ref{eq: boundary representation}), $\var$ satisfies the following
Taylor expansion at $x^{\prime}=0$:
\begin{eqnarray*}
\var(x^{\prime})=-\sum_{j=1}^{N-1}\al_{j}x_{j}^{2}+O(|x^{\prime}|^{3}), &  & \text{for }|x^{\prime}|\text{ small enough},
\end{eqnarray*}
with constants $\al_{j}>0$, $j=1,\cdots,N-1$. Then the outward unit
normal $\nu$ of $\pa\Om$ is given by
\begin{eqnarray*}
\nu(x)=\frac{(\pa_{x_{1}}\var(x^{\prime}),\cdots,\pa_{x_{N-1}}\var(x^{\prime}),-1)}{\sqrt{1+\sum_{j=1}^{N-1}|\pa_{x_{j}}\var(x^{\prime})|^{2}}}, &  & \text{for }x\in\pa\Om\cap B_{\de}(0).
\end{eqnarray*}
Thus
\begin{eqnarray*}
x\cdot\nu(x)=-\sum_{j=1}^{N-1}\al_{j}x_{j}^{2}+O(|x^{\prime}|^{3}) &  & \text{for }x\in\pa\Om\cap B_{\de}(0),
\end{eqnarray*}
which implies that
\begin{eqnarray*}
x\cdot\nu(x)\le0 &  & \text{for }x\in\pa\Om\cap B_{\de^{\prime}}(0),
\end{eqnarray*}
for some $0<\de^{\prime}<\de$. That is, (\ref{condition of WX})
is satisfied. So we find that condition (\ref{condition of YY}) is
a special case of condition (\ref{condition of WX}).

On the other hand, condition (\ref{condition of WX}) does allow more
possibilities than that of condition (\ref{condition of YY}). As
an example, suppose that $\pa\Om$ has a piece of concave boundary
close to $0$ if $0\in\pa\Om$. Precisely, let $\var\in C^{1}$ be
given such that (\ref{eq: boundary representation}) holds, and
\[
0=\var(0)\le\varphi(x^{\prime})+\sum_{j=1}^{N-1}\pa_{x_{j}}\var(x^{\prime})(0-x_{j}^{\prime})
\]
 for $x^{\prime}$ close to $0$. Then we have
\[
x\cdot\nu(x)=-\frac{\varphi(x^{\prime})+\sum_{j=1}^{N-1}\pa_{x_{j}}\var(x^{\prime})(0-x_{j}^{\prime})}{\sqrt{1+\sum_{j=1}^{N-1}|\pa_{x_{j}}\var(x^{\prime})|^{2}}}\le0
\]
for $x^{\prime}$ close to $0$. That is, (\ref{condition of WX})
is satisfied. In particular, if $\Om$ has a piece of flat boundary
in a neighborhood of $0$ when $0\in\pa\Om$, then all the principle
curvatures of $\pa\Om$ vanish at the point $0$. So in this case
(\ref{condition of WX}) is satisfied while (\ref{condition of YY})
is not satisfied.

Finally, we point out that in the case when $0\in\pa\Om$, the mean
curvature of $\pa\Om$ at $0$ plays an important role in the existence
of the mountain pass solutions to equation (\ref{1.1}). See for example
\cite{LinCS2010-1,GhoussoubKang2004,GhoussoubRobert2006,LinCS2010,LiLin2012}.

Our paper is organized as follows. In Section 2 we obtain some integral
estimates. In Section 3 we obtain estimates for solutions of equation
(\ref{1.3}) in the region which is close to but is suitably away
from the blow up point. We prove Theorems \ref{thm: main result}
and \ref{thm: infinitely many solutions} in Section 4. In order to
give a clear line of our framework, we will list some necessary estimates
on solutions of quasilinear equation with Hardy potential in Appendix
A, a decay estimate for critical Sobolev growth equation in Appendix
B, some estimates on solutions of $p$-Laplacian equation by Wolff
potential in Appendix C, and a global compactness result for the solution
$u_{n}$ of equation (\ref{1.3}) in Appendix D, respectively.

Our notations are standard. $B_{R}(x)$ is the open ball in $\R^{N}$
centered at $x$ with radius $R$. We write
\[
\fint_{E}udx=\frac{1}{|E|}\int_{E}udx,
\]
whenever $E$ is a measurable set with $0<|E|<\infty$, the $n$-dimensional
Lebesgue measure of $E$. Let $D$ be an arbitrary domain in $\R^{N}$.
We denote by $C_{0}^{\infty}(D)$ the space of smooth functions with
compact support in $D$. For any $1\le r\le\infty$, $L^{r}(D)$ is
the Banach space of Lebesgue measurable functions $u$ such that the
norm
\[
||u||_{r,D}=\begin{cases}
\left(\int_{D}|u|^{r}\right)^{\frac{1}{r}}, & \text{if }1\le r<\infty\\
\esssup_{D}|u|, & \text{if }r=\infty
\end{cases}
\]
is finite. The local space $L_{\text{loc}}^{r}(D)$ consists of functions
belonging to $L^{r}(D^{\prime})$ for all $D^{\prime}\subset\subset D$.
We also denote $d\mu_{s}=|x|^{-s}dx$ and $\|v\|_{q,\mu_{s}}=\left(\int|v|^{q}d\mu_{s}\right)^{1/q}$
when there is no confusion on the domain of the integral. A function
$u$ belongs to the Sobolev space $W^{1,r}(D)$ if $u\in L^{r}(D)$
and its first order weak partial derivatives also belong to $L^{r}(D)$.
We endow $W^{1,r}(D)$ with the norm
\[
||u||_{1,r,D}=||u||_{r,D}+||\na u||_{r,D}.
\]
The local space $W_{\text{loc}}^{1,r}(D)$ consists of functions belonging
to $W^{1,r}(D^{\prime})$ for all open $D^{\prime}\subset\subset D$.
We recall that $W_{0}^{1,r}(D)$ is the completion of $C_{0}^{\infty}(D)$
in the norm $||\cdot||_{1,r,D}$. For the properties of the Sobolev
functions, we refer to the monograph \cite{Ziemer}.

\section{Integral estimates }

Let $u_{n}$, $n=1,2,\ldots,$ be a solution of equation (\ref{1.3})
with $\ep=\ep_{n}\to0$, satisfying $\|u_{n}\|\le C$ for some constant
$C$ independent of $n$. In this section we deduce some integral
estimates for $u_{n}$. For any function $u,$ we define
\begin{equation}
\rho_{x,\la}(u)=\la^{\frac{N-p}{p}}u(\la(\cdot-x))\label{def: scaling transform}
\end{equation}
for any $\la>0$ and $x\in\R^{N}$. By Proposition \ref{prop:D.1},
$u_{n}$ can be decomposed as
\[
u_{n}=u_{0}+\sum_{j=1}^{m}\rho_{x_{n,j},\la_{n,j}}(U_{j})+\omega_{n}.
\]
Here $x_{n,j}=0$ for $j=k+1,\ldots,m$.

To prove that $u_{n}$ strongly converges in $W_{0}^{1,p}(\Om)$,
we only need to show that the bubbles $\rho_{x_{n,j},\la_{n,j}}(U_{j})$
will not appear in the decomposition of $u_{n}$. Among all the bubbles,
we can choose one bubble such that this bubble has the slowest concentration
rate. That is, the corresponding $\la$ is the lowest order infinity
among all the $\la$ appearing in the bubbles. For simplicity, we
denote by $\la_{n}$ the slowest concentration rate and by $x_{n}$
the corresponding concentration point.

For any $q>1$, denote
\[
\|u\|_{*,q}=\Big(\int_{\Omega}|u|^{q}dx\Big)^{\frac{1}{q}}+\Big(\int_{\Omega}|u|^{\frac{(N-s)q}{N}}d\mu_{s}\Big)^{\frac{N}{(N-s)q}}
\]
and $q'=\frac{q}{q-1}.$ Recall that $d\mu_{s}=|x|^{-s}dx$.

For any $p^{*}/p'<p_{2}<p^{*}<p_{1}$, $\al>0$ and $\la\ge1$, we
consider the following relation:
\begin{equation}
\begin{cases}
\|u_{1}\|_{*,p_{1}}\leq\alpha,\\
\|u_{2}\|_{*,p_{2}}\leq\alpha\la^{\frac{N}{p^{*}}-\frac{N}{p_{2}}},
\end{cases}\label{eq: 2.1}
\end{equation}
and define
\begin{equation}
\|u\|_{\ast,p_{1},p_{2},\la}=\inf\al,\label{eq: 2.2}
\end{equation}
where the infimum is taken over all $\al>0$ for which there exist
$u_{1},u_{2}$ such that $|u|\le u_{1}+u_{2}$ and (\ref{eq: 2.1})
holds. Our main result in this section is the following estimate.
\begin{prop}
\label{prop: main result in section 2} Let $u_{n}$, $n=1,2,\ldots,$
be a solution of equation (\ref{1.3}) with $\ep=\ep_{n}\to0$, satisfying
$||u_{n}||\le C$ for some positive constant $C$ independent of $n$.
Then for any $p_{1},p_{2}\in(p^{*}/p^{\prime},\wq)$, $p_{2}<p^{*}<p_{1}$,
there exists a constant $C=C(p_{1},p_{2})>0$, independent of $n$,
such that
\[
\|u_{n}\|_{\ast,p_{1},p_{2},\la_{n}}\le C
\]
 for all $n$. Here $\la_{n}$ is the slowest concentration rate of
$u_{n}$.
\end{prop}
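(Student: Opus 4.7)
The plan is to prove Proposition \ref{prop: main result in section 2} by exploiting the global compactness decomposition of $u_n$ provided by Proposition \ref{prop:D.1} and then estimating each piece of that decomposition separately in the norms defining $\|\cdot\|_{\ast,p_1,p_2,\la_n}$. Write
\[
u_n=u_0+\sum_{j=1}^{m}\rho_{x_{n,j},\la_{n,j}}(U_j)+\om_n,
\]
where $\om_n\to0$ strongly in $W_{0}^{1,p}(\Om)$, $u_0\in W_{0}^{1,p}(\Om)$ is a fixed (weak-limit) solution, and each $U_j$ is a bubble profile on $\R^N$ (or on a half-space) with the decay $|U_j(z)|\lesssim |z|^{-(N-p)/(p-1)}$ at infinity; moreover $\la_{n,j}\ge \la_n$ for every bubble by our choice of $\la_n$ as the slowest concentration rate.

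The first step is to handle the "regular" part $v_n:=u_0+\om_n$. Since $u_0$ is fixed and $\om_n\to0$ in $W_{0}^{1,p}$, the Sobolev and Caffarelli--Kohn--Nirenberg inequalities give $\|v_n\|_{p^*}\le C$ and $\|v_n\|_{p^*(s),\mu_s}\le C$. To upgrade to $p_1>p^*$, the plan is to invoke the pointwise/Moser-type bounds from Appendix A for the weak limit $u_0$ (recall $u_0$ solves the limit equation and is bounded in a neighborhood of $\pa\Om$ and away from $0$) and to bound $\om_n$ by the Wolff-potential estimates in Appendix C together with its $W^{1,p}$-smallness. This yields $\|v_n\|_{\ast,p_1}\le C$, so $v_n$ contributes entirely to $u_1$ in the decomposition $|u_n|\le u_1+u_2$.

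The heart of the argument is the treatment of the bubbles. For each $j$ I will split
\[
\rho_{x_{n,j},\la_{n,j}}(U_j)=V_{n,j}^{\mathrm{in}}+V_{n,j}^{\mathrm{out}},
\qquad
V_{n,j}^{\mathrm{in}}:=\rho_{x_{n,j},\la_{n,j}}(U_j)\chi_{\{|y-x_{n,j}|<1/\la_{n,j}\}},
\]
put all $V_{n,j}^{\mathrm{in}}$ into $u_2$ and all $V_{n,j}^{\mathrm{out}}$ into $u_1$. A direct change of variables together with the invariance of the $W^{1,p}$ norm shows
\[
\bigl\|V_{n,j}^{\mathrm{in}}\bigr\|_{p_2}\le C\,\la_{n,j}^{\frac{N-p}{p}-\frac{N}{p_2}}=C\,\la_{n,j}^{\frac{N}{p^*}-\frac{N}{p_2}}\le C\,\la_{n}^{\frac{N}{p^*}-\frac{N}{p_2}},
\]
the last inequality using that $N/p^*-N/p_2<0$ together with $\la_{n,j}\ge\la_n$; the analogous computation for the $d\mu_s$-integral (splitting according to whether $x_{n,j}=0$ or $x_{n,j}\ne 0$, and using in the latter case that $|y|^{-s}$ is bounded on the support of $V_{n,j}^{\mathrm{in}}$) gives the same scaling. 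For the tail $V_{n,j}^{\mathrm{out}}$ the decay $|U_j(z)|\lesssim|z|^{-(N-p)/(p-1)}$ renders $\int|U_j|^{p_1}\chi_{\{|z|>1\}}<\infty$ precisely because $p_1>p^*/p'=N(p-1)/(N-p)$, and the same scaling computation then yields $\|V_{n,j}^{\mathrm{out}}\|_{p_1}\le C$ uniformly in $n$; the weighted part is handled in the same way after separating the contributions from $\{|y|<|x_{n,j}|/2\}$ and its complement when $x_{n,j}\ne 0$, and directly when $x_{n,j}=0$ since the exponent $(N-s)p_1/N>p^*(s)/p'$.

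The principal obstacle will be the bookkeeping of the weighted integral against $d\mu_s$ for bubbles that are \emph{not} concentrating at $0$ but whose tails overlap the singularity of $|y|^{-s}$, as well as for the interior-to-boundary bubbles dictated by condition (\ref{condition of WX}). I would dispatch these by isolating a fixed ball around $0$, applying the Caffarelli--Kohn--Nirenberg inequality on that ball to the pieces supported away from each bubble's concentration point, and using the scale-invariance $\rho_{x,\la}$ to reduce the remaining integrals to computations on fixed profiles. Summing the $m$ bubbles (a finite number independent of $n$) with the estimate on $v_n$ then produces the required decomposition $|u_n|\le u_1+u_2$ with $\|u_1\|_{\ast,p_1}+\la_n^{\frac{N}{p_2}-\frac{N}{p^*}}\|u_2\|_{\ast,p_2}\le C$, completing the proof.
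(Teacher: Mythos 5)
Your plan is to prove the proposition by estimating each piece of the Struwe-type decomposition directly in $L^{p_1}$ and $L^{p_2}$. This is not the paper's route (the paper works with the majorant $w_n$ solving (\ref{eq: equation of w_n}), proves the bound for one pair of exponents near $p^*$ by an absorption argument in Lemma \ref{lem: Initiation}, and then bootstraps via Lemma \ref{lem: iteration}), and as written your route has two genuine gaps.

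First, the outer bubble tails cannot be placed in the $u_1$ slot. With $V^{\mathrm{out}}_{n,j}=\rho_{x_{n,j},\la_{n,j}}(U_j)\chi_{\{|y-x_{n,j}|>1/\la_{n,j}\}}$, the change of variables $z=\la_{n,j}(y-x_{n,j})$ gives
\[
\int\bigl|V^{\mathrm{out}}_{n,j}\bigr|^{p_1}dy=\la_{n,j}^{\frac{N-p}{p}p_1-N}\int_{|z|>1}|U_j|^{p_1}dz,
\]
and for $p_1>p^{*}$ the exponent $\frac{N-p}{p}p_1-N$ is strictly positive, so $\|V^{\mathrm{out}}_{n,j}\|_{p_1}\sim\la_{n,j}^{\frac{N}{p^{*}}-\frac{N}{p_1}}\to\infty$; the finiteness of $\int_{|z|>1}|U_j|^{p_1}$ (which your condition $p_1>p^{*}/p'$ does guarantee) does not help. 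The entire bubble must go into the $u_2$ slot, where the $\la_n^{\frac{N}{p^{*}}-\frac{N}{p_2}}$ allowance is designed exactly to absorb it; splitting at radius $1/\la_{n,j}$ and sending the tail to $u_1$ is both unnecessary and false.

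Second, and more seriously, the remainder $\om_n$ cannot be handled as you propose. Strong convergence $\om_n\to0$ in $W_0^{1,p}(\Om)$ controls $\|\om_n\|_{p^{*}}$ and nothing above $p^{*}$; since $\om_n$ is a difference of terms and satisfies no equation of its own, neither Moser iteration nor the Wolff-potential estimates of Appendix C apply to it, so there is no way to conclude $\|\om_n\|_{\ast,p_1}\le C$ for $p_1>p^{*}$. Placing $\om_n$ in the $u_2$ slot instead would require the quantitative rate $\|\om_n\|_{p_2}\le C\la_n^{\frac{N}{p^{*}}-\frac{N}{p_2}}\to0$, which is also not available. This is precisely the obstruction the paper's argument is built to avoid: in Lemma \ref{lem: Initiation} the contribution of $\om_n$ enters only through the coefficient $|u_{n,2}|^{p^{*}-p}$ of an auxiliary equation, whose $L^{N/p}$-norm tends to zero, so that the corresponding piece $w_2$ satisfies $\|w_2\|_{\ast,p_1,p_2,\la_n}\le\tfrac12\|w_n\|_{\ast,p_1,p_2,\la_n}$ and is absorbed into the left-hand side. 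Without this equation-based absorption (followed by the iteration of Lemma \ref{lem: iteration} to reach all admissible $p_1,p_2$), the direct term-by-term estimate of the decomposition does not close.
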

Several lemmas are needed to prove Proposition \ref{prop: main result in section 2}.
In the rest of this section, let us fix a bounded domain $D$ with
$\Om\subset\subset D$ and define $r=\frac{1}{3}\dist(\Om,\pa D)$.
\begin{lem}
\label{lem: 2.2} Let $w\in W_{0}^{1,p}(D)$, $w\ge0$, be the solution
of
\begin{equation}
\left\{ \begin{array}{ll}
-\Delta_{p}w=\big(a_{1}(x)+\frac{a_{2}(x)}{|x|^{s}}\big)v^{p-1}, & \text{in }D,\\
w=0, & \text{on }\pa D,
\end{array}\right.\label{eq: 2.2.1}
\end{equation}
where $a_{1},a_{2},v\geq0$ are bounded functions in $D$. Then for
any $\frac{p^{*}}{p'}<p_{2}<p^{*}<p_{1},$ there is a constant $C=C(p_{1},p_{2})>0$,
such that for any $\la\geq1$,
\begin{equation}
\|w\|_{*,p_{1},p_{2},\la}\leq C\left(||a_{1}||_{\frac{N}{p}}+||a_{2}||_{\frac{N-s}{p-s},\mu_{s}}\right)^{\frac{1}{p-1}}||v||_{*,p_{1},p_{2},\la}.\label{eq: 2.2.1-1}
\end{equation}
\end{lem}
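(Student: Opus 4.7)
The plan is to combine the definition of the composite norm, the pointwise Wolff-potential representation from Appendix C, and a one-step Moser argument at each exponent $p_{i}$.

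First, fix $\alpha>\|v\|_{\ast,p_{1},p_{2},\la}$ and use the definition (\ref{eq: 2.1})--(\ref{eq: 2.2}) to pick non-negative $v_{1},v_{2}$ with $v\le v_{1}+v_{2}$, $\|v_{1}\|_{\ast,p_{1}}\le\alpha$, and $\|v_{2}\|_{\ast,p_{2}}\le\alpha\la^{N/p^{\ast}-N/p_{2}}$. The inequality $(a+b)^{p-1}\le C_{p}(a^{p-1}+b^{p-1})$ gives $v^{p-1}\le C(v_{1}^{p-1}+v_{2}^{p-1})$, so the right-hand side of (\ref{eq: 2.2.1}) is dominated by $\mu\le C(\mu_{1}+\mu_{2})$ with $d\mu_{i}:=(a_{1}+a_{2}|x|^{-s})v_{i}^{p-1}dx$. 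The pointwise Wolff bound $w(x)\le C\mathbf{W}_{1,p}^{2r}(\mu)(x)$ from Appendix C, together with the sub-additivity $\mathbf{W}_{1,p}^{2r}(\mu_{1}+\mu_{2})\le C(\mathbf{W}_{1,p}^{2r}(\mu_{1})+\mathbf{W}_{1,p}^{2r}(\mu_{2}))$ (which follows from $(a+b)^{1/(p-1)}\le C(a^{1/(p-1)}+b^{1/(p-1)})$ inserted under the defining integral of $\mathbf{W}_{1,p}^{2r}$), then produces a pointwise splitting $w\le C(\tilde{w}_{1}+\tilde{w}_{2})$, where $\tilde{w}_{i}:=\mathbf{W}_{1,p}^{2r}(\mu_{i})$ is comparable to the local solution of $-\Delta_{p}u_{i}=\mu_{i}$ by Kilpel\"ainen--Mal\'y.

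Next I would estimate $\|\tilde{w}_{i}\|_{\ast,p_{i}}$ by a one-step Moser argument. Testing the auxiliary equation for $u_{i}$ against $u_{i}^{\theta_{i}}$ with $\theta_{i}+p-1=pp_{i}/p^{\ast}$, and applying the Sobolev and Caffarelli--Kohn--Nirenberg inequalities to $u_{i}^{(\theta_{i}+p-1)/p}$ on the left, yields a lower bound proportional to $\|u_{i}\|_{\ast,p_{i}}^{\theta_{i}+p-1}$ (the two pieces of the norm arising respectively from Sobolev against $dx$ and CKN against $d\mu_{s}$). On the right, three-factor H\"older's inequality (with exponents $N/p$ against $dx$ for the $a_{1}$-part, and $(N-s)/(p-s)$ against $d\mu_{s}$ for the $a_{2}|x|^{-s}$-part, the remaining two exponents being $p_{i}/(p-1)$ and $p_{i}/\theta_{i}$) produces $C(\|a_{1}\|_{N/p}+\|a_{2}\|_{(N-s)/(p-s),\mu_{s}})\|v_{i}\|_{\ast,p_{i}}^{p-1}\|u_{i}\|_{\ast,p_{i}}^{\theta_{i}}$. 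The hypothesis $p^{\ast}/p'<p_{i}$ is precisely what forces $\theta_{i}>0$; absorbing the $u_{i}^{\theta_{i}}$ factor on the left gives $\|\tilde{w}_{i}\|_{\ast,p_{i}}\le C(\|a_{1}\|_{N/p}+\|a_{2}\|_{(N-s)/(p-s),\mu_{s}})^{1/(p-1)}\|v_{i}\|_{\ast,p_{i}}$.

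Summing the two estimates and letting $\alpha\to\|v\|_{\ast,p_{1},p_{2},\la}$ yields (\ref{eq: 2.2.1-1}); the scaling factor $\la^{N/p^{\ast}-N/p_{2}}$ propagates automatically from $\|v_{2}\|_{\ast,p_{2}}$ because the estimate is homogeneous in $v_{i}$. The main technical obstacles I anticipate are (i) legitimising the test function $u_{i}^{\theta_{i}}$ when $p_{1}>p^{\ast}$, which requires a short bootstrap through Sobolev and CKN (starting from the $W_{0}^{1,p}(D)$ regularity provided by Appendix C) before the a priori $L^{p_{1}}$-integrability needed to close the single Moser step is available, and (ii) arranging the arithmetic so that the unweighted $L^{p_{i}}$ component and the $|x|^{-s}$-weighted $L^{p_{i}(N-s)/N}$ component of $\|\cdot\|_{\ast,p_{i}}$ are controlled by the \emph{same} right-hand side in one application of H\"older.
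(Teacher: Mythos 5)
Your overall architecture coincides with the paper's: decompose $v\le v_{1}+v_{2}$ according to the definition of $\|\cdot\|_{\ast,p_{1},p_{2},\la}$, estimate an auxiliary solution attached to each $v_{i}$ in its own norm $\|\cdot\|_{\ast,p_{i}}$, and recombine via the infimum defining the composite norm. Your ``one-step Moser argument'' is precisely the paper's Proposition \ref{prop: A.1}/Corollary \ref{cor: A.2} (the test exponent $\theta_{i}=1+p(p_{i}/p^{*}-1)$, the three-factor H\"older with $\|a_{1}\|_{N/p}$ against $dx$ and $\|a_{2}\|_{\frac{N-s}{p-s},\mu_{s}}$ against $d\mu_{s}$, and the role of $p_{i}>p^{*}/p'$ in making $\theta_{i}>0$ are all correct and the exponents close), so that part is sound modulo the standard $\ep$-regularization of the test function when $0<\theta_{i}<1$, i.e.\ for the $p_{2}$-piece.

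The genuine gap is in the pointwise splitting $w\le C(\tilde{w}_{1}+\tilde{w}_{2})$. The bound $w(x)\le C\,\mathbf{W}_{1,p}^{2r}(\mu)(x)$ with $r$ tied to $\dist(\Om,\pa D)$ is not in Appendix C and is false in general: if $\mu$ has no mass in $B_{2r}(x)$ the truncated potential vanishes while $w(x)>0$. The correct global estimate uses a truncation radius comparable to $\diam D$, and when you then try to pass from $\mathbf{W}_{1,p}^{2\diam D}(\mu_{i})$ back to the solution $u_{i}$ via the Kilpel\"ainen--Mal\'y \emph{lower} bound (which is only valid up to the distance to $\pa D$), the tail $\int_{r}^{2\diam D}\bigl(t^{p-N}\mu_{i}(B_{t}(x))\bigr)^{\frac{1}{p-1}}\frac{dt}{t}$ survives as an extra additive term. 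This is exactly the $C\inf_{B_{r}(x)}\tilde{w}$ remainder in the paper's Proposition \ref{prop: C.1}, which your two-term splitting silently discards. The remainder is not negligible for the lemma as stated: it must be shown to be bounded by $C\bigl(\|a_{1}\|_{\frac{N}{p}}+\|a_{2}\|_{\frac{N-s}{p-s},\mu_{s}}\bigr)^{\frac{1}{p-1}}\al$ (the paper does this by applying Corollary \ref{cor: A.2} at the exponent $p_{2}$ to the solution driven by $(2v_{1})^{p-1}+(2v_{2})^{p-1}$ and bounding $\inf_{B_{r}(x)}\tilde{w}$ by the $L^{p_{2}}$-average over $B_{r}(x)$), and then absorbed into the $p_{1}$-component of the splitting, where its being a bounded function on the bounded set $\Om$ makes its $\|\cdot\|_{\ast,p_{1}}$ norm harmless. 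Without this third term and its estimate, your inequality $\|w\|_{\ast,p_{1},p_{2},\la}\le C(\ldots)^{\frac{1}{p-1}}\|v\|_{\ast,p_{1},p_{2},\la}$ does not follow; with it, your argument becomes a rederivation of Proposition \ref{prop: C.1} and then coincides with the paper's proof.
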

\begin{proof}
Let $\al>||v||_{*,p_{1},p_{2},\la}$ be an arbitrary constant. Then
by the definition of $||v||_{*,p_{1},p_{2},\la}$, there exist $v_{1},v_{2}$
such that $|v|\le v_{1}+v_{2}$ and (\ref{eq: 2.1}) holds with $u_{i}=v_{i}$,
$i=1,2$.

Let $w_{i}\in W_{0}^{1,p}(D)$, $w_{i}\ge0$, $i=1,2$, be the solution
of equation (\ref{eq: 2.2.1}) with $v=2v_{i}$. Then Corollary \ref{cor: A.2}
implies that
\begin{equation}
\|w_{i}\|_{\ast,p_{i}}\le C\left(||a_{1}||_{\frac{N}{p}}+||a_{2}||_{\frac{N-s}{p-s},\mu_{s}}\right)^{\frac{1}{p-1}}||v_{i}||_{*,p_{i}}.\label{eq:2.2.2}
\end{equation}

Let $\tilde{w}\in W_{0}^{1,p}(D)$, $\tilde{w}\ge0$, be the solution
of equation
\[
\left\{ \begin{array}{ll}
-\Delta_{p}w=\big(a_{1}(x)+\frac{a_{2}(x)}{|x|^{s}}\big)\left((2v_{1})^{p-1}+(2v_{2})^{p-1}\right), & \text{in }D,\\
w=0, & \text{on }\pa D.
\end{array}\right.
\]
Applying Corollary \ref{cor: A.2} gives us
\[
\begin{aligned}\|\tilde{w}\|_{\ast,p_{2}} & \le C\left(||a_{1}||_{\frac{N}{p}}+||a_{2}||_{\frac{N-s}{p-s},\mu_{s}}\right)^{\frac{1}{p-1}}\lv\left((2v_{1})^{p-1}+(2v_{2})^{p-1}\right)^{\frac{1}{p-1}}\rv_{\ast,p_{2}}\\
 & \le C\left(||a_{1}||_{\frac{N}{p}}+||a_{2}||_{\frac{N-s}{p-s},\mu_{s}}\right)^{\frac{1}{p-1}}\left(||v_{1}||_{*,p_{2}}+||v_{2}||_{*,p_{2}}\right)\\
 & \le C\left(||a_{1}||_{\frac{N}{p}}+||a_{2}||_{\frac{N-s}{p-s},\mu_{s}}\right)^{\frac{1}{p-1}}\al.
\end{aligned}
\]
Thus for any $x\in\Om,$ we have
\begin{equation}
\inf_{B_{r}(x)}\tilde{w}\le\left(\fint_{B_{r}(x)}\tilde{w}^{p_{2}}dy\right)^{\frac{1}{p_{2}}}\le C\left(||a_{1}||_{\frac{N}{p}}+||a_{2}||_{\frac{N-s}{p-s},\mu_{s}}\right)^{\frac{1}{p-1}}\al.\label{eq: 2.2.3}
\end{equation}

Note that $v^{p-1}\le((2v_{1})^{p-1}+(2v_{2})^{p-1})$. Thus $w\le\tilde{w}$
by comparison principle. Applying Proposition \ref{prop: C.1} gives
us
\begin{eqnarray*}
w(x)\le\tilde{w}(x)\le C\inf_{B_{r}(x)}\tilde{w}+Cw_{1}(x)+Cw_{2}(x), &  & \forall\, x\in\Om.
\end{eqnarray*}
Let $\tilde{w}_{1}(x)=C\inf_{B_{r}(x)}\tilde{w}+Cw_{1}(x)$ and $\tilde{w}_{2}(x)=Cw_{2}(x)$
for $x\in\Om$. Then $w\le\tilde{w}_{1}+\tilde{w}_{2}$ in $\Om$.
By (\ref{eq:2.2.2}) and (\ref{eq: 2.2.3}), we have that
\[
\|\tilde{w}_{1}\|_{\ast,p_{1}}\le C\left(||a_{1}||_{\frac{N}{p}}+||a_{2}||_{\frac{N-s}{p-s},\mu_{s}}\right)^{\frac{1}{p-1}}\al,
\]
and that
\[
\|\tilde{w}_{2}\|_{\ast,p_{2}}\le C\left(||a_{1}||_{\frac{N}{p}}+||a_{2}||_{\frac{N-s}{p-s},\mu_{s}}\right)^{\frac{1}{p-1}}\al\la^{\frac{N}{p^{*}}-\frac{N}{p_{2}}}.
\]
Hence by definition (\ref{eq: 2.2}), we obtain that
\[
\|w\|_{*,p_{1},p_{2},\la}\leq C\left(||a_{1}||_{\frac{N}{p}}+||a_{2}||_{\frac{N-s}{p-s},\mu_{s}}\right)^{\frac{1}{p-1}}\al.
\]
Since $\al>||v||_{*,p_{1},p_{2},\la}$ is arbitrary, we get (\ref{eq: 2.2.1-1}).
This finishes the proof.
\end{proof}
We also have the following result which will be used in the proof
of Proposition \ref{prop: main result in section 2}.
\begin{lem}
\label{lem: iteration} Let $w\in W_{0}^{1,p}(D)$, $w\ge0$, be
the solution of
\begin{equation}
\begin{cases}
-\Delta_{p}w=2\mu v^{p^{*}-1}+\frac{2v^{p^{*}(s)-1}}{|x|^{s}}+\frac{A}{|x|^{s}}, & \text{in }D,\\
w=0, & \text{on }\pa D,
\end{cases}\label{eq: 2.3.1}
\end{equation}
 where $v\ge0$ is a bounded function and $A\ge0$ is a constant.
Then for any $p_{1},p_{2}\in(p^{*}-1,\frac{N}{p}(p^{*}-1))$, $p_{2}<p^{*}<p_{1}$,
and for any $\la\ge1$, there exists a constant $C=C(p_{1},p_{2})>0$,
such that
\begin{equation}
\|w\|_{\ast,q_{1},q_{2},\la}\le C\|v\|_{*,p_{1},p_{2},\la}^{\frac{p^{*}-1}{p-1}}+C,\label{eq: 2.3.2}
\end{equation}
where $q_{1},q_{2}$ are given by
\begin{eqnarray*}
q_{1}=\frac{(p-1)N\hat{p}_{1}}{N-p\hat{p}_{1}} & \text{with} & \hat{p}_{1}=\frac{Np_{1}}{(p^{*}(s)-1)N+sp_{1}},
\end{eqnarray*}
and
\begin{eqnarray*}
q_{2}=\frac{(p-1)N\hat{p}_{2}}{N-p\hat{p}_{2}} & \text{with} & \hat{p}_{2}=\frac{p_{2}}{p^{*}-1}.
\end{eqnarray*}
\end{lem}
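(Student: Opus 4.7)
The plan is to mirror the structure of the proof of Lemma~\ref{lem: 2.2}, replacing the linear factor $v^{p-1}$ by the two superlinear nonlinearities appearing in \eqref{eq: 2.3.1} and separately handling the inhomogeneous source $A/|x|^s$. First I would fix an arbitrary $\al > \|v\|_{*,p_1,p_2,\la}$ and use the definition \eqref{eq: 2.2} to obtain $v \leq v_1 + v_2$ with $\|v_1\|_{*,p_1} \leq \al$ and $\|v_2\|_{*,p_2} \leq \al\la^{N/p^* - N/p_2}$. The elementary convexity inequality $(a+b)^r \leq C_r(a^r + b^r)$ applied with $r = p^*-1$ and $r = p^*(s)-1$ converts the right-hand side of \eqref{eq: 2.3.1} into a sum of four pieces built from $v_1$ and $v_2$, plus the constant piece $A/|x|^s$.

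I would then introduce, for $i = 0,1,2$, the nonnegative solutions $w_i \in W_0^{1,p}(D)$ of the $p$-Laplace equation with right-hand sides $A/|x|^s$ (for $i = 0$) and $C\mu v_i^{p^*-1} + C v_i^{p^*(s)-1}/|x|^s$ (for $i = 1,2$). By the comparison principle, $w$ is dominated by the solution $\tilde w$ of the equation whose right-hand side is the sum, and Proposition~\ref{prop: C.1} supplies the pointwise Wolff-potential decomposition
\[
w(x) \leq \tilde w(x) \leq C \inf_{B_r(x)} \tilde w + C\bigl(w_0(x) + w_1(x) + w_2(x)\bigr), \qquad x \in \Om.
\]
For each $w_i$ I would invoke the standard Sobolev-type estimate for the $p$-Laplacian: if $-\Delta_p w_i = f_i$ in $D$ with $w_i = 0$ on $\pa D$ and $f_i \in L^{\hat p}$ with $\hat p < N/p$, then $\|w_i\|_{q,D} \leq C\|f_i\|_{\hat p}^{1/(p-1)}$ for $q = N\hat p(p-1)/(N - p\hat p)$, together with its Caffarelli--Kohn--Nirenberg counterpart controlling the $d\mu_s$-weighted part of $\|\cdot\|_{*,q}$. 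The exponents $\hat p_1 = Np_1/((p^*(s)-1)N + sp_1)$ and $\hat p_2 = p_2/(p^*-1)$ are chosen precisely so that H\"older's inequality applied to both parts of $\|v_i\|_{*,p_i}$ yields $\|f_i\|_{\hat p_i} \leq C\|v_i\|_{*,p_i}^{p^*-1}$, after which the Sobolev estimate gives $\|w_1\|_{*,q_1} \leq C\al^{(p^*-1)/(p-1)}$ and $\|w_2\|_{*,q_2} \leq C\al^{(p^*-1)/(p-1)}\la^{N/p^* - N/q_2}$.

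To conclude, setting $\tilde w_1 = C\inf_{B_r}\tilde w + Cw_0 + Cw_1$ (controlled in $\|\cdot\|_{*,q_1}$) and $\tilde w_2 = Cw_2$ (carrying the $\la$-factor in $\|\cdot\|_{*,q_2}$) decomposes $w \leq \tilde w_1 + \tilde w_2$; the averaged term $\inf_{B_r} \tilde w$ and the constant-source $w_0$ are each bounded by a constant depending only on $A$ and $D$, producing the additive $+C$. This verifies \eqref{eq: 2.1} for $w$ at level $C\al^{(p^*-1)/(p-1)} + C$, and letting $\al \downarrow \|v\|_{*,p_1,p_2,\la}$ delivers \eqref{eq: 2.3.2}. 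I expect the main obstacle to be the exponent arithmetic: namely, checking that the single choice of $\hat p_i$ accommodates simultaneously the unweighted nonlinearity $v_i^{p^*-1}$ and the weighted nonlinearity $v_i^{p^*(s)-1}/|x|^s$ through H\"older on the $d\mu_s$-piece of $\|\cdot\|_{*,p_i}$, and verifying the algebraic identity $(N/p^* - N/p_2)(p^*-1)/(p-1) = N/p^* - N/q_2$ required for the $\la$-scaling to match the definition of $\|\cdot\|_{*,q_1,q_2,\la}$.
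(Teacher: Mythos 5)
Your outline follows the paper's proof almost step for step: the same decomposition $v\le v_1+v_2$ at level $\al$, the same auxiliary solutions estimated via Proposition \ref{prop: A.1} with exactly the exponents $\hat p_1,\hat p_2$ (which are the minima of the two candidate values forced by the unweighted and weighted nonlinearities, respectively), the same pointwise splitting via Proposition \ref{prop: C.1}, and the same scaling identity $\bigl(\tfrac{N}{p^*}-\tfrac{N}{p_2}\bigr)\tfrac{p^*-1}{p-1}=\tfrac{N}{p^*}-\tfrac{N}{q_2}$. The only genuine flaw is your claim that the term $\inf_{B_r(x)}\tilde w$ is ``bounded by a constant depending only on $A$ and $D$'': it is not, since $\tilde w$ solves an equation whose right-hand side involves $v_1$ and $v_2$. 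The correct route, which is what the paper does, is to first apply Proposition \ref{prop: A.1} to $\tilde w$ itself (with the exponent $\hat p_2$) to get $\|\tilde w\|_{\ast,q_2}\le C\al^{\frac{p^*-1}{p-1}}+C$, and then bound $\inf_{B_r(x)}\tilde w\le\bigl(\fint_{B_r(x)}\tilde w^{q_2}\,dy\bigr)^{1/q_2}\le C\al^{\frac{p^*-1}{p-1}}+C$. This still fits into the target level $C\al^{\frac{p^*-1}{p-1}}+C$ for $\tilde w_1$, so the conclusion survives, but as written that step's justification is wrong and the intermediate integral estimate on $\tilde w$ is missing from your argument.
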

\begin{proof}
Let $\al>||v||_{*,p_{1},p_{2},\la}$ be an arbitrary constant. Then
by the definition of $||v||_{*,p_{1},p_{2},\la}$, there exist $v_{1},v_{2}$
such that $|v|\le v_{1}+v_{2}$ and (\ref{eq: 2.1}) holds with $u_{i}=v_{i}$,
$i=1,2$.

Let $w_{1}\in W_{0}^{1,p}(D)$, $w_{1}\ge0$, be the solution of equation
(\ref{eq: 2.3.1}) with $v=2v_{1}$. Let
\[
\hat{p}_{1}=\min\left\{ \frac{p_{1}}{p^{*}-1},\frac{Np_{1}}{\left(p^{*}(s)-1\right)N+sp_{1}}\right\} .
\]
By our assumptions on the parameters $N,p,s$ and $p_{1}$, we get
\[
\hat{p}_{1}=\frac{Np_{1}}{\left(p^{*}(s)-1\right)N+sp_{1}}\in\left(1,\frac{N}{p}\right)
\]
and $(p^{*}-1)\hat{p}_{1}\le p_{1}$, $(p^{*}(s)-1)\frac{(N-s)\hat{p}_{1}}{N-s\hat{p}_{1}}=\frac{(N-s)p_{1}}{N}$.
Thus applying Proposition \ref{prop: A.1} gives us
\begin{eqnarray*}
\|w_{1}\|_{\ast,q_{1}} & \le & C\left(\|v_{1}^{p^{*}-1}\|_{\hat{p}_{1}}+\|v_{1}^{p^{*}(s)-1}+A\|_{\frac{(N-s)\hat{p}_{1}}{N-s\hat{p}_{1}},\mu_{s}}\right)^{\frac{1}{p-1}}\\
 & \le & C\left(\|v_{1}\|_{p_{1}}^{p^{*}-1}+\|v_{1}\|_{\frac{(N-s)p_{1}}{N},\mu_{s}}^{p^{*}(s)-1}+1\right)^{\frac{1}{p-1}}\\
 & \le & C\al^{\frac{p^{*}-1}{p-1}}+C,
\end{eqnarray*}
where $q_{1}=(p-1)N\hat{p}_{1}/(N-p\hat{p}_{1})$.

Similarly, let $w_{2}\in W_{0}^{1,p}(D)$, $w_{2}\ge0$, be the solution
of equation
\[
\begin{cases}
-\Delta_{p}w=2\mu v^{p^{*}-1}+\frac{2v^{p^{*}(s)-1}}{|x|^{s}}, & \text{in }D,\\
w=0, & \text{on }\pa D.
\end{cases}
\]
Let
\[
\hat{p}_{2}=\min\left\{ \frac{p_{2}}{p^{*}-1},\frac{Np_{2}}{(p^{*}(s)-1)N+sp_{2}}\right\} .
\]
Then
\[
\hat{p}_{2}=\frac{p_{2}}{p^{*}-1}\in\left(1,\frac{N}{p}\right)
\]
and $\frac{(N-s)\hat{p}_{2}}{N-s\hat{p}_{2}}\le\frac{(N-s)p_{2}}{N}$,
$(p^{*}-1)\hat{p}_{2}=p_{2}$. Applying Proposition \ref{prop: A.1}
as above, we obtain that
\[
\|w_{2}\|_{\ast,q_{2}}\le\left(C\al^{\frac{p^{*}-1}{p-1}}+C\right)\la^{\frac{N}{p^{*}}-\frac{N}{q_{2}}},
\]
where $q_{2}=(p-1)N\hat{p}_{2}/(N-p\hat{p}_{2})$. To obtain the above
estimate, we used the equality
\[
\left(\frac{N}{p^{*}}-\frac{N}{p_{2}}\right)\frac{p^{*}-1}{p-1}=\frac{N}{p^{*}}-\frac{N}{q_{2}}.
\]

Let $\tilde{w}\in W_{0}^{1,p}(D)$, $\tilde{w}\ge0$ be the solution
of equation
\[
\begin{cases}
-\Delta_{p}w=2\mu\left((2v_{1})^{p^{*}-1}+(2v_{2})^{p^{*}-1}\right)+2\frac{(2v_{1})^{p^{*}(s)-1}+(2v_{2})^{p^{*}(s)-1}}{|x|^{s}}+\frac{A}{|x|^{s}}, & \text{in }D,\\
w=0, & \text{on }\pa D.
\end{cases}
\]
Estimating as above gives that
\[
\|\tilde{w}\|_{\ast,q_{2}}\le C\al^{\frac{p^{*}-1}{p-1}}+C,
\]
which implies that
\begin{eqnarray*}
\inf_{B_{r}(x)}\tilde{w}\le\left(\fint_{B_{r}(x)}\tilde{w}^{q_{2}}dy\right)^{\frac{1}{q_{2}}}\le C\al^{\frac{p^{*}-1}{p-1}}+C, &  & \forall\, x\in\Om.
\end{eqnarray*}

Note that $w\le\tilde{w}$ in $\Om$. Applying Proposition \ref{prop: C.1}
and arguing as that of Lemma \ref{lem: 2.2}, we prove Lemma \ref{lem: iteration}.
This completes the proof.
\end{proof}
Now define $u_{n}=0$ in $D\backslash\Om$. It is easy to see that
\[
\left|\mu|u|^{p^{*}-2-\ep}u+\frac{|u|^{p^{*}(s)-2-\ep}u}{|x|^{s}}+a(x)|u|^{p-2}u\right|\le2\mu|u|^{p^{*}-1}+\frac{2|u|^{p^{*}(s)-1}+A}{|x|^{s}}
\]
for sufficiently large constant $A>0$. Let $w_{n}\in W_{0}^{1,p}(D)$,
$w_{n}\ge0$, be the solution of equation
\begin{equation}
\begin{cases}
-\Delta_{p}w=2\mu|u_{n}|^{p^{*}-1}+\frac{2|u_{n}|^{p^{*}(s)-1}}{|x|^{s}}+\frac{A}{|x|^{s}}, & \text{in }D,\\
w=0, & \text{on }\pa D.
\end{cases}\label{eq: equation of w_n}
\end{equation}
Then by comparison principle,
\begin{eqnarray}
|u_{n}|\le w_{n} &  & \text{in }\Om.\label{eq: 2.15}
\end{eqnarray}
Moreover, since $\|u_{n}\|\le C$, it is easy to obtain from equation
(\ref{eq: equation of w_n}) that
\begin{equation}
\|w_{n}\|_{p^{*}}+\|w_{n}\|_{p^{*}(s),\mu_{s}}\le C\label{eq: 2.16}
\end{equation}
for some $C>0$ independent of $n$.

To prove Proposition \ref{prop: main result in section 2}, it is
enough to prove the estimate of Proposition \ref{prop: main result in section 2}
for $w_{n}$. We have the following result which shows that Proposition
\ref{prop: main result in section 2} holds for $w_{n}$ for some
$p_{1},p_{2}\in(p^{*}/p^{\prime},\wq)$, $p_{2}<p^{*}<p_{1}$.
\begin{lem}
 \label{lem: Initiation} There exist $p_{1},p_{2}\in(p^{*}/p^{\prime},\wq)$,
$p_{2}<p^{*}<p_{1}$, and constant $C=C(p_{1},p_{2})>0$, independent
of $n$, such that
\begin{equation}
\|w_{n}\|_{\ast,p_{1},p_{2},\la_{n}}\le C.\label{eq: initiation formula}
\end{equation}
\end{lem}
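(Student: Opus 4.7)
The plan is to use the profile decomposition of $u_n$ furnished by Proposition~\ref{prop:D.1} to split the source term in equation~(\ref{eq: equation of w_n}) into a ``regular'' and a ``bubble'' piece, solve auxiliary $p$-Laplace equations for each, and combine them via the Wolff potential comparison of Proposition~\ref{prop: C.1} in the same spirit as the proofs of Lemmas~\ref{lem: 2.2} and~\ref{lem: iteration}.

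Write $u_n=\psi_n+b_n$ with $\psi_n=u_0+\omega_n$ and $b_n=\sum_{j=1}^{m}\rho_{x_{n,j},\la_{n,j}}(U_j)$, the decomposition produced by Proposition~\ref{prop:D.1}; by the choice of $\la_n$, $\la_{n,j}\ge\la_n\ge 1$ for every $j$. The inequality $|a+b|^{\tau}\le C(|a|^{\tau}+|b|^{\tau})$, applied with $\tau=p^*-1$ and $\tau=p^*(s)-1$, majorizes the source in~(\ref{eq: equation of w_n}) by $F_{n,1}+F_{n,2}$, where $F_{n,1}$ collects the $\psi_n$-terms together with $A|x|^{-s}$ and $F_{n,2}$ collects the $b_n$-terms. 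Let $w_n^{(i)}\in W_0^{1,p}(D)$ ($i=1,2$), both nonnegative, solve $-\Delta_p w=F_{n,i}$ with zero boundary data, and let $\widetilde w_n$ solve the equation with source $F_{n,1}+F_{n,2}$. By comparison $w_n\le\widetilde w_n$, and by Proposition~\ref{prop: C.1} (used as in Lemmas~\ref{lem: 2.2} and~\ref{lem: iteration}), $\widetilde w_n(x)\le C\inf_{B_r(x)}\widetilde w_n+C\bigl(w_n^{(1)}(x)+w_n^{(2)}(x)\bigr)$ for $x\in\Om$. It therefore suffices to bound $w_n^{(1)}+\inf_{B_r}\widetilde w_n$ in $\|\cdot\|_{*,p_1}$ and $w_n^{(2)}$ in $\|\cdot\|_{*,p_2}$ with the scaling required by~(\ref{eq: 2.1}).

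For the regular piece, $u_0$ is a finite-energy solution of the limit equation and hence belongs to $L^{\infty}(\Om)$ by the critical-growth Moser iteration (the Hardy weight $|x|^{-s}$ remains subcritical since $s<p$), so $u_0\in L^{q}(\Om)$ for every finite $q$. Combined with $\omega_n\to 0$ in $L^{p^*}(\Om)$, this yields uniform control $\||\psi_n|^{p^*-1}\|_{\hat p_1}+\||\psi_n|^{p^*(s)-1}\|_{\frac{(N-s)\hat p_1}{N-s\hat p_1},\mu_s}\le C$ for any $\hat p_1\in(1,N/p)$ slightly larger than $(p^*)'$. Choosing such a $\hat p_1$ so that $p_1:=(p-1)N\hat p_1/(N-p\hat p_1)$ lies in $(p^*,\infty)$ and applying Proposition~\ref{prop: A.1} yields $\|w_n^{(1)}\|_{*,p_1}\le C$; the auxiliary term $\inf_{B_r(x)}\widetilde w_n$ is controlled by the global bound $\|\widetilde w_n\|_{*,p^*}\le C$ and absorbed into $w_n^{(1)}$. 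For the bubble piece, each profile $U_j$ decays like $|y|^{-(N-p)/(p-1)}$ at infinity and thus lies in $L^{q}(\R^N)$ for every $q>p^*/p'$; picking $p_2\in(p^*/p',p^*)$, the scaling identity
\[
\|\rho_{x_{n,j},\la_{n,j}}(U_j)\|_{p_2}=\la_{n,j}^{N/p^*-N/p_2}\|U_j\|_{p_2}\le C\la_n^{N/p^*-N/p_2}
\]
(and its $\mu_s$-weighted analogue) feeds into Proposition~\ref{prop: A.1} with $\hat p_2=p_2/(p^*-1)$, producing $\|w_n^{(2)}\|_{*,q_2}\le C\la_n^{N/p^*-N/q_2}$ with $q_2=(p-1)N\hat p_2/(N-p\hat p_2)$. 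Relabelling $q_2$ as our final $p_2$ (possibly after further shrinking of the initial $p_2$) delivers the scaling required by~(\ref{eq: 2.1}).

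The delicate point is obtaining uniform $L^{q}$-bounds with $q>p^*$ for the non-bubble part $\psi_n$: the $L^{\infty}$-bound on $u_0$ rests on a Moser iteration adapted to the Hardy weight, while the $\omega_n$-contribution must be absorbed via its $L^{p^*}$-smallness from Proposition~\ref{prop:D.1}, which is where the strong-convergence statement of the profile decomposition is used in full. A secondary bookkeeping task is the simultaneous selection of $\hat p_1,\hat p_2$ inside the admissible range $(1,N/p)$ of Proposition~\ref{prop: A.1} so that the output exponents $q_i=(p-1)N\hat p_i/(N-p\hat p_i)$ straddle $p^*$; the dimensional assumption $N>p^2+p$ enters here in the background.
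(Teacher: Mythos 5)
There is a genuine gap in your treatment of the non-bubble part, and it is precisely the point you flag as ``delicate'' without resolving. You group $\psi_n=u_0+\omega_n$ together and claim that $\|\,|\psi_n|^{p^*-1}\|_{\hat p_1}\le C$ uniformly for some $\hat p_1$ \emph{strictly larger} than $(p^*)'$. But $\|\,|\psi_n|^{p^*-1}\|_{\hat p_1}=\|\psi_n\|_{(p^*-1)\hat p_1}^{p^*-1}$, and $\hat p_1>(p^*)'$ forces $(p^*-1)\hat p_1>p^*$ (indeed $\hat p_1=(p^*)'$ is exactly the value for which $(p-1)N\hat p_1/(N-p\hat p_1)=p^*$). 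The only information Proposition \ref{prop:D.1} gives on $\omega_n$ is convergence to $0$ in $W_0^{1,p}(\Om)$, hence in $L^{p^*}(\Om)$; there is no uniform bound on $\|\omega_n\|_q$ for any $q>p^*$, so the claimed estimate on $F_{n,1}$ fails and Proposition \ref{prop: A.1} cannot be applied to $w_n^{(1)}$ with an output exponent $p_1>p^*$. The same obstruction applies to the weighted norm $\|\,|\psi_n|^{p^*(s)-1}\|_{\frac{(N-s)\hat p_1}{N-s\hat p_1},\mu_s}$.

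The paper's proof avoids this by two devices you are missing. First, $\omega_n$ is kept as a \emph{separate} third piece rather than being merged with $u_0$. Second, and crucially, the source of (\ref{eq: equation of w_n}) is not majorized by full powers $|u_{n,i}|^{p^*-1}$ but is factored as $\bigl(|u_{n,i}|^{p^*-p}+|u_{n,i}|^{p^*(s)-p}|x|^{-s}\bigr)w_n^{p-1}$, using $|u_n|\le w_n$. For the $\omega_n$-piece the relevant coefficient norm is then $\|\,|\omega_n|^{p^*-p}\|_{N/p}=\|\omega_n\|_{p^*}^{p^*-p}\to 0$ (note $(p^*-p)\tfrac{N}{p}=p^*$), which \emph{is} controlled by the $L^{p^*}$-smallness. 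Lemma \ref{lem: 2.2} then bounds the corresponding solution by $\tfrac{1}{2C'}\|w_n\|_{*,p_1,p_2,\la_n}$, and this term is absorbed into the left-hand side of the final inequality $\|\tilde w_n\|_{*,p_1,p_2,\la_n}\le C+\tfrac12\|\tilde w_n\|_{*,p_1,p_2,\la_n}$. This absorption step — estimating the $\omega_n$-contribution in terms of the unknown quantity itself with a small constant, rather than by an a priori bound — is the idea your argument needs; without it the uniform bound for $q>p^*$ you require simply is not available. Your handling of the $u_0$ piece (bounded, hence in every $L^q$) and of the bubble piece (explicit decay from Proposition \ref{prop: B.1} plus the scaling identity) is essentially sound and matches the paper's, modulo the same factorization.
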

\begin{proof}
By Proposition \ref{prop:D.1}, $u_{n}$ can be decomposed as
\[
u_{n}=u_{0}+\sum_{j=1}^{k}\rho_{x_{n,j},\la_{n,j}}(U_{j})+\sum_{j=k+1}^{m}\rho_{0,\la_{n,j}}(U_{j})+\omega_{n}.
\]
Write $x_{n,j}=0$ for $j=k+1,\ldots,m$. In the following proof,
we denote
\begin{eqnarray*}
u_{n,0}=u_{0}, & {\displaystyle u_{n,1}=\sum_{j=1}^{m}\rho_{x_{n,j},\la_{n,j}}(U_{j})}, & \text{ and }u_{n,2}=\om_{n}.
\end{eqnarray*}
By (\ref{eq: 2.15}), we have
\[
2\mu|u_{n}|^{p^{*}-1}+\frac{2|u_{n}|^{p^{*}(s)-1}}{|x|^{s}}+\frac{A}{|x|^{s}}\le C\sum_{i=0}^{2}\left(|u_{n,i}|^{p^{*}-p}+\frac{|u_{n,i}|^{p^{*}(s)-p}}{|x|^{s}}\right)w_{n}^{p-1}+\frac{A}{|x|^{s}}.
\]

Let $\tilde{w}_{n}\in W_{0}^{1,p}(D)$, $\tilde{w}_{n}\ge0$, be the
solution of equation
\begin{equation}
\begin{cases}
-\Delta_{p}w={\displaystyle C\sum_{i=0}^{2}\left(|u_{n,i}|^{p^{*}-p}+\frac{|u_{n,i}|^{p^{*}(s)-p}}{|x|^{s}}\right)w_{n}^{p-1}+\frac{A}{|x|^{s}}}, & \text{in }D,\\
w=0, & \text{on }\pa D.
\end{cases}\label{eq: 2.18}
\end{equation}
 Comparison principle implies that
\begin{eqnarray*}
w_{n}\le\tilde{w}_{n}, &  & \text{in }D.
\end{eqnarray*}
By (\ref{eq: 2.16}), it is easy to derive that
\begin{equation}
\|\tilde{w}_{n}\|_{p^{*}}+\|\tilde{w}_{n}\|_{p^{*}(s),\mu_{s}}\le C.\label{eq: 2.19}
\end{equation}
Thus we have
\begin{eqnarray}
\inf_{B_{r}(x)}\tilde{w}_{n}\le C, &  & \forall\, x\in\Om.\label{eq: 2.20}
\end{eqnarray}

Now let $w_{i}\in W_{0}^{1,p}(D)$, $w_{i}\ge0$, $i=0,1,2$, be the
solution of equation
\[
\left\{ \begin{array}{ll}
{\displaystyle -\Delta_{p}w=C\left(|u_{n,i}|^{p^{*}-p}+\frac{|u_{n,i}|^{p^{*}(s)-p}}{|x|^{s}}\right)w_{n}^{p-1}+\frac{A\de_{i0}}{|x|^{s}},} & \text{in }D,\\
w=0, & \text{on }\pa D,
\end{array}\right.
\]
where $\de_{00}=1$ and $\de_{10}=\de_{20}=0$.

Then by Proposition \ref{prop: C.1} and (\ref{eq: 2.20}), we obtain
that
\begin{eqnarray}
\tilde{w}_{n}(x)\le C+Cw_{0}(x)+Cw_{1}(x)+Cw_{2}(x), &  & \forall\, x\in\Om.\label{eq: 2.21}
\end{eqnarray}
In the following we estimate $w_{i}$, $i=0,1,2$, term by term.

First we estimate $w_{0}$. We will use Proposition \ref{prop: A.1}
to estimate $w_{0}$. Since $0<s<p$, we can choose $q\ge1$ such
that
\[
\frac{s}{N}+\frac{p-1}{p^{*}}<\frac{1}{q}<\frac{p}{N}+\frac{p-1}{p^{*}}=\frac{p^{*}-1}{p^{*}}
\]
and that
\[
q<\frac{N}{p}.
\]
Then
\begin{eqnarray*}
\frac{(p-1)Nq}{N-pq}>p^{*} & \text{ and } & \frac{(p-1)(N-s)q}{N-sq}<p^{*}(s).
\end{eqnarray*}
Let $p_{1}=\frac{(p-1)Nq}{N-pq}$. Applying Proposition \ref{prop: A.1}
to $w_{0}$ gives us
\begin{eqnarray}
\|w_{0}\|_{\ast,p_{1}} & \le & C\left(\lv|u_{n,0}|^{p^{*}-p}w_{n}^{p-1}\rv_{q}+\lv|u_{n,0}|^{p^{*}(s)-p}w_{n}^{p-1}+A\rv_{\frac{(N-s)q}{N-sq},\mu_{s}}\right)^{\frac{1}{p-1}}\nonumber \\
 & \le & C\left(\lv w_{n}^{p-1}\rv_{q}+\lv w_{n}^{p-1}\rv_{\frac{(N-s)q}{N-sq},\mu_{s}}+1\right)^{\frac{1}{p-1}}\nonumber \\
 & \le & C\left(\lv w_{n}\rv_{(p-1)q}+\lv w_{n}\rv_{\frac{(p-1)(N-s)q}{N-sq},\mu_{s}}+1\right)\label{eq: 2.22}\\
 & \le & C\left(\|w_{n}\|_{p^{*}}+\|w_{n}\|_{p^{*}(s),\mu_{s}}+1\right)\nonumber \\
 & \le & C.\nonumber
\end{eqnarray}
Here in the second inequality we used the boundedness of $u_{n,0}=u_{0}$
and in the last inequality we used (\ref{eq: 2.16}). So this gives
estimate for $w_{0}$.

Next we use Corollary \ref{cor: A.3} to estimate $w_{1}$. We will
choose $p_{2}<p^{*}$, $p_{2}$ close to $p^{*}$ enough such that
\begin{equation}
\|w_{1}\|_{\ast,p_{2}}\le C\la_{n}^{\frac{N}{p^{*}}-\frac{N}{p_{2}}}.\label{eq: 2.23}
\end{equation}
Indeed, applying Corollary \ref{cor: A.3} to $w_{1}$ gives us that
\[
\|w_{1}\|_{\ast,p_{2}}\le C\left(\lv|u_{n,1}|^{p^{*}-p}\rv_{r_{1}}+\lv|u_{n,1}|^{p^{*}(s)-p}\rv_{r_{2},\mu_{s}}\right)^{\frac{1}{p-1}}\|w_{n}\|_{\ast,p^{*}},
\]
where  $r_{1},r_{2}$ are defined by
\begin{eqnarray*}
\frac{1}{r_{1}}=(p-1)\left(\frac{1}{p_{2}}-\frac{1}{p^{*}}\right)+\frac{p}{N} & \text{ and } & \frac{1}{r_{2}}=(p-1)\left(\frac{N}{(N-s)p_{2}}-\frac{1}{p^{*}(s)}\right)+\frac{p-s}{N-s}.
\end{eqnarray*}
By (\ref{eq: 2.16}), we have
\begin{equation}
\|w_{1}\|_{\ast,p_{2}}\le C\left(\lv|u_{n,1}|^{p^{*}-p}\rv_{r_{1}}+\lv|u_{n,1}|^{p^{*}(s)-p}\rv_{r_{2},\mu_{s}}\right)^{\frac{1}{p-1}}.\label{eq: 2.24}
\end{equation}
We only need to estimate $\lv|u_{n,1}|^{p^{*}-p}\rv_{r_{1}}$ and
$\lv|u_{n,1}|^{p^{*}(s)-p}\rv_{r_{2},\mu_{s}}$.

For all $1\le j\le m$, it is easy to see that
\[
\int_{\R^{N}}|\rho_{x_{n,j},\la_{n,j}}(U_{j})|^{\left(p^{*}-p\right)r_{1}}dy=\la_{n,j}^{pr_{1}-N}\int_{\R^{N}}|U_{j}|^{\left(p^{*}-p\right)r_{1}}dy.
\]
By Proposition \ref{prop: B.1}, for all $1\le j\le m$,
\begin{eqnarray*}
|U_{j}(y)|\le\frac{C}{1+|y|^{\frac{N-p}{p-1}}}, &  & \forall\, y\in\R^{N}.
\end{eqnarray*}
Since $\frac{N-p}{p-1}(p^{*}-p)r_{1}\to\frac{pN}{p-1}$ as $p_{2}\to p^{*}$,
we can choose $p_{2}$ close to $p^{*}$ enough such that $\frac{N-p}{p-1}(p^{*}-p)r_{1}>N$.
Then
\[
\int_{\R^{N}}|U_{j}|^{\left(p^{*}-p\right)r_{1}}dy<\wq.
\]
Thus for all $1\le j\le m$,
\[
\int_{\R^{N}}|\rho_{x_{n,j},\la_{n,j}}(U_{j})|^{\left(p^{*}-p\right)r_{1}}dy\le C\la_{n,j}^{pr_{1}-N}.
\]
Therefore
\begin{equation}
\begin{aligned}\lv|u_{n,1}|^{p^{*}-p}\rv_{r_{1}}^{\frac{1}{p-1}} & =\lv u_{n,1}\rv_{(p^{*}-p)r_{1}}^{\frac{p^{*}-p}{p-1}}\le C\sum_{j=1}^{m}\lv\rho_{x_{n,j},\la_{n,j}}(U_{j})\rv_{(p^{*}-p)r_{1}}^{\frac{p^{*}-p}{p-1}}\\
 & \le C\sum_{j=1}^{m}\la_{n,j}^{\frac{pr_{1}-N}{(p^{*}-p)r_{1}}\cdot\frac{p^{*}-p}{p-1}}\le C\la_{n}^{\frac{N}{p^{*}}-\frac{N}{p_{2}}}.
\end{aligned}
\label{eq: 2.25}
\end{equation}
We used the equality
\[
\frac{pr_{1}-N}{(p^{*}-p)r_{1}}\cdot\frac{p^{*}-p}{p-1}=\frac{N}{p^{*}}-\frac{N}{p_{2}}
\]
in the last inequality of (\ref{eq: 2.25}). This gives estimate for
$\lv|u_{n,1}|^{p^{*}-p}\rv_{r_{1}}$.

We can also choose $p_{2}$ close to $p^{*}$ enough such that for
all $1\le j\le m$,
\[
\int_{\R^{N}}|\rho_{x_{n,j},\la_{n,j}}(U_{j})|^{\left(p^{*}(s)-p\right)r_{2}}d\mu_{s}\le C\la_{n,j}^{(p-s)r_{2}-N+s}.
\]
Indeed, we have
\[
\int_{\R^{N}}|\rho_{x_{n,j},\la_{n,j}}(U_{j})|^{\left(p^{*}(s)-p\right)r_{2}}d\mu_{s}=\la_{n,j}^{(p-s)r_{2}-N+s}\int_{\R^{N}}\frac{|U_{j}(y)|^{\left(p^{*}(s)-p\right)r_{2}}}{|y+\la_{n,j}x_{n,j}|^{s}}dy.
\]
Write $y_{n,j}=-\la_{n,j}x_{n,j}$. Let
\begin{eqnarray*}
I_{1}=\int_{B_{1}(y_{n,j})}\frac{|U_{j}(y)|^{\left(p^{*}(s)-p\right)r_{2}}}{|y-y_{n,j}|^{s}}dy, & \text{ and } & I_{2}=\int_{\R^{N}\backslash B_{1}(y_{n,j})}\frac{|U_{j}(y)|^{\left(p^{*}(s)-p\right)r_{2}}}{|y-y_{n,j}|^{s}}dy.
\end{eqnarray*}
Since $U_{j}$ is bounded and $0<s<N$, we have
\[
I_{1}\le C.
\]
Let $\de>0$ be a number to be determined. By H\"older's inequality,
we have
\begin{eqnarray*}
I_{2} & \le & \left(\int_{\R^{N}\backslash B_{1}(y_{n,j})}\frac{1}{|y-y_{n,j}|^{N+\de}}dy\right)^{\frac{s}{N+\de}}\left(\int_{\R^{N}\backslash B_{1}(y_{n,j})}|U_{j}(y)|^{\frac{\left(p^{*}(s)-p\right)r_{2}(N+\de)}{N+\de-s}}dy\right)^{\frac{N+\de-s}{N+\de}}\\
 & \le & C_{\de}\left(\int_{\R^{N}}|U_{j}(y)|^{\frac{\left(p^{*}(s)-p\right)r_{2}(N+\de)}{N+\de-s}}dy\right)^{\frac{N+\de-s}{N+\de}}.
\end{eqnarray*}
Since
\begin{eqnarray*}
\frac{N-p}{p-1}\frac{\left(p^{*}(s)-p\right)r_{2}(N+\de)}{N+\de-s}\to\frac{p(N-s)(N+\de)}{(p-1)(N+\de-s)} &  & \text{as }p_{2}\to p^{*},
\end{eqnarray*}
and
\begin{eqnarray*}
\frac{p(N-s)(N+\de)}{(p-1)(N+\de-s)}>N &  & \text{for }\de>0\text{ small enough},
\end{eqnarray*}
we can $p_{2}$ close to $p^{*}$ enough and $\de>0$ small enough
such that $\frac{N-p}{p-1}\frac{\left(p^{*}(s)-p\right)r_{2}(N+\de)}{N+\de-s}>N$.
Then
\[
\int_{\R^{N}}|U_{j}(y)|^{\frac{\left(p^{*}(s)-p\right)r_{2}(N+\de)}{N+\de-s}}dy<\wq.
\]
Then we obtain that
\[
I_{2}\le C.
\]
Combining the estimates of $I_{1}$ and $I_{2}$ we obtain that
\[
\int_{\R^{N}}|\rho_{x_{n,j},\la_{n,j}}(U_{j})|^{\left(p^{*}(s)-p\right)r_{2}}d\mu_{s}\le C\la_{n,j}^{(p-s)r_{2}-N+s}.
\]
Hence we have
\begin{equation}
\begin{aligned}\lv|u_{n,1}|^{p^{*}(s)-p}\rv_{r_{2},\mu_{s}}^{\frac{1}{p-1}} & =\lv u_{n,1}\rv_{(p^{*}(s)-p)r_{2},\mu_{s}}^{\frac{p^{*}(s)-p}{p-1}}\\
 & \:\le C\sum_{j=1}^{m}\la_{n,j}^{\frac{(p-s)r_{2}-N+s}{\left(p^{*}(s)-p\right)r_{2}}\cdot\frac{p^{*}(s)-p}{p-1}}\\
 & \le C\la_{n}^{\frac{N}{p^{*}}-\frac{N}{p_{2}}}.
\end{aligned}
\label{eq: 2.26}
\end{equation}
In the above inequality we used the equality
\[
\frac{(p-s)r_{2}-N+s}{\left(p^{*}(s)-p\right)r_{2}}\cdot\frac{p^{*}(s)-p}{p-1}=\frac{N}{p^{*}}-\frac{N}{p_{2}}.
\]
Combining (\ref{eq: 2.24})-(\ref{eq: 2.26}) gives (\ref{eq: 2.23}).

Finally we use Lemma \ref{lem: 2.2} to estimate $w_{2}$. By Lemma
\ref{lem: 2.2}, we have
\begin{equation}
\begin{aligned}\|w_{2}\|_{\ast,p_{1},p_{2},\la_{n}} & \le C\left(\||u_{n,2}|^{p^{*}-p}\|_{\frac{N}{p}}+\||u_{n,2}|^{p^{*}(s)-p}\|_{\frac{N-s}{p-s},\mu_{s}}\right)^{\frac{1}{p-1}}\|w_{n}\|_{\ast,p_{1},p_{2},\la_{n}}\\
 & \le\frac{1}{2C^{\prime}}\|w_{n}\|_{\ast,p_{1},p_{2},\la_{n}},
\end{aligned}
\label{eq: 2.27}
\end{equation}
since $\om_{n}\to0$ in $W_{0}^{1,p}(\Om)$, where the constant $C^{\prime}$
is given by (\ref{eq: 2.21}).

Now combining (\ref{eq: 2.16}), (\ref{eq: 2.21})-(\ref{eq: 2.23})
and (\ref{eq: 2.27}), we obtain that
\begin{eqnarray*}
\|\tilde{w}_{n}\|_{\ast,p_{1},p_{2},\la_{n}} & \le & C+C\|w_{0}\|_{\ast,p_{1},p_{2},\la_{n}}+C\|w_{1}\|_{\ast,p_{1},p_{2},\la_{n}}+C\|w_{2}\|_{\ast,p_{1},p_{2},\la_{n}}\\
 & \le & C+C\|w_{0}\|_{\ast,p_{1}}+C\|w_{1}\|_{\ast,p_{1}}+\frac{1}{2}\|w_{n}\|_{\ast,p_{1},p_{2},\la_{n}}\\
 & \le & C+\frac{1}{2}\|\tilde{w}_{n}\|_{\ast,p_{1},p_{2},\la_{n}},
\end{eqnarray*}
which completes the proof.
\end{proof}
Now we can prove Proposition \ref{prop: main result in section 2}.

\begin{proof}[Proof of Proposition \ref{prop: main result in section 2}]Since
$w_{n}$ is a solution to equation (\ref{eq: equation of w_n}), we
can use Lemma \ref{lem: 2.2} and Lemma \ref{lem: iteration} to prove
Proposition \ref{prop: main result in section 2}. See details in
e.g. \cite{CaoPengYan2012}. This finishes the proof of Proposition
\ref{prop: main result in section 2}. \end{proof}

\section{Estimates on safe regions}

Since the number of the bubbles of $u_{n}$ is finite, by Proposition
\ref{prop:D.1} we can always find a constant $\bar{C}>0,$ independent
of $n$, such that the region
\[
\mathcal{A}_{n}^{1}=\Bigl(B_{(\bar{C}+5)\la_{n}^{-\frac{1}{p}}}(x_{n})\backslash B_{\bar{C}\la_{n}^{-\frac{1}{p}}}(x_{n})\Bigl)\cap\,\Omega
\]
does not contain any concentration point of $u_{n}$ for any $n$.
We call this region a safe region for $u_{n}.$

Let
\[
\mathcal{A}_{n}^{2}=\Bigl(B_{(\bar{C}+4)\la_{n}^{-\frac{1}{p}}}(x_{n})\backslash B_{(\bar{C}+1)\la_{n}^{-\frac{1}{p}}}(x_{n})\Bigl)\cap\,\Omega.
\]
In this section, we prove the following result.
\begin{prop}
\label{prop: 3.1} Let $u_{n}$ be a solution of equation (\ref{1.3})
with $\ep=\ep_{n}\to0$, satisfying $||u_{n}||\le C$ for some positive
constant $C$ independent of $n$.  Then for any constant $q\geq p$,
there is a constant $C>0$ independent of $n$, such that
\[
\int_{\mathcal{A}_{n}^{2}}|u_{n}|^{q}dx\leq C\la_{n}^{-\frac{N}{p}}.
\]

\end{prop}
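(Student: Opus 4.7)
The plan is to reduce the statement to a uniform $L^\infty$ bound on $u_n$ over the safe annulus. Since $\mathcal{A}_n^2$ is contained in an annulus of inner and outer radii of order $\la_n^{-1/p}$, one has the elementary volume estimate $|\mathcal{A}_n^2|\le C\la_n^{-N/p}$. Thus, once one establishes $\|u_n\|_{L^\infty(\mathcal{A}_n^2)}\le C$ uniformly in $n$, the claim follows immediately from
\[
\int_{\mathcal{A}_n^2}|u_n|^q\,dx\le \|u_n\|_{L^\infty(\mathcal{A}_n^2)}^q\,|\mathcal{A}_n^2|\le C\la_n^{-N/p}
\]
for every $q\ge p$.

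To obtain the $L^\infty$ bound on the safe region, I would rewrite equation (\ref{1.3}) on $\mathcal{A}_n^1$ as $-\De_p u_n=V_n(x)|u_n|^{p-2}u_n$ with
\[
V_n(x)=\mu|u_n|^{p^*-p-\ep_n}+\frac{|u_n|^{p^*(s)-p-\ep_n}}{|x|^s}+a(x),
\]
and then run a Moser-type iteration on $\mathcal{A}_n^1$ using cutoffs supported there and equal to $1$ on $\mathcal{A}_n^2$, relying on the estimates for quasilinear equations with Hardy potentials from Appendix A together with the pointwise Wolff potential bounds of Proposition \ref{prop: C.1}. The crucial input is that the $L^{N/p}$-norm of $|u_n|^{p^*-p-\ep_n}$ and the weighted $L^{(N-s)/(p-s)}(d\mu_s)$-norm of $|u_n|^{p^*(s)-p-\ep_n}$ (precisely the norms appearing in Corollary \ref{cor: A.2} and Lemma \ref{lem: 2.2}) are small on $\mathcal{A}_n^1$. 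This smallness follows because, by Proposition \ref{prop: main result in section 2} combined with the pointwise bubble decay in Proposition \ref{prop: B.1}, and because $\mathcal{A}_n^1$ lies at distance of order $\la_n^{-1/p}$ from every concentration point $x_{n,j}$, the rescaling $y\mapsto\la_{n,j}(y-x_{n,j})$ sends $\mathcal{A}_n^1$ into a region escaping to infinity in $\R^N$, forcing
\[
\int_{\mathcal{A}_n^1}|u_n|^{p^*}\,dx+\int_{\mathcal{A}_n^1}|u_n|^{p^*(s)}\,d\mu_s\to 0\quad\text{as }n\to\infty.
\]

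The main obstacle is precisely this iteration at the critical integrability level: a standard Moser iteration would require the coefficient to lie in $L^{N/p+\de}$ for some $\de>0$, which is unavailable here. Instead one must exploit the smallness on $\mathcal{A}_n^1$ in the exactly critical norms to absorb the leading term at each step and force convergence. An added subtlety when $0\in\bar{\Om}$ is that the Hardy singularity $|x|^{-s}$ may sit close to the safe region, so the $|x|^{-s}$-weighted contribution must be handled separately via the Caffarelli--Kohn--Nirenberg inequality and the quasilinear Hardy estimates in Appendix A; this is the role those appendices play in closing the iteration.
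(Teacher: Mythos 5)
Your overall strategy --- exploit the vanishing of the critical norms of $u_{n}$ on the safe annulus to run a local regularity argument and then integrate over a set of measure $O(\la_{n}^{-N/p})$ --- is the same in spirit as the paper's, which rescales $v_{n}(x)=u_{n}(\la_{n}^{-1/p}x)$ and applies the critical-smallness iteration of Lemma \ref{lem: A.4} on unit balls $B_{1}(z)$. But as written your argument has two genuine gaps.

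First, the reduction to a uniform $L^{\infty}(\mathcal{A}_{n}^{2})$ bound cannot be closed by the iteration you describe. With coefficients controlled only in the critical spaces $L^{N/p}$ and $L^{(N-s)/(p-s)}(d\mu_{s})$, the smallness threshold needed to absorb the leading term depends on the exponent reached (in Lemma \ref{lem: A.4} the constant $\de=\de(\tau)$ degenerates as $\tau$ grows, and one takes a minimum over finitely many steps), so the scheme yields $L^{q}$ for each finite $q$ with a constant $C(q)$, not $L^{\infty}$. You half-acknowledge this obstacle, but ``absorb the leading term at each step and force convergence'' is exactly what fails at the critical level. Fortunately $L^{\infty}$ is not needed: for a fixed $q$ it suffices to prove $\int_{B_{\la_{n}^{-1/p}/2}(y)}|u_{n}|^{q}dx\le C\la_{n}^{-N/p}$ for each $y\in\mathcal{A}_{n}^{2}$ and then cover; this is what the rescaled Lemma \ref{lem: A.4} delivers, and the range $p\le q\le p^{*}$ follows by H\"older from the case $q>p^{*}$. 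Rescaling to unit scale also spares you from tracking the $\la_{n}^{1/p}$ blow-up of the cutoff gradients in your fixed-scale iteration on $\mathcal{A}_{n}^{1}$.

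Second, and more importantly, any such iteration needs a uniform starting datum: a bound on $\bigl(\la_{n}^{N/p}\int_{B_{\la_{n}^{-1/p}}(y)}|u_{n}|^{\ga}dx\bigr)^{1/\ga}$ for some $\ga$ and all $y\in\mathcal{A}_{n}^{2}$. Proposition \ref{prop: main result in section 2} alone does not give this: the component $u_{n,1}$ bounded in $L^{p_{1}}$ only yields $\fint_{B_{r}(y)}|u_{n,1}|^{\ga}dx\le Cr^{-N\ga/p_{1}}$, which at the scale $r\sim\la_{n}^{-1/p}$ grows like a positive power of $\la_{n}$. The paper supplies the missing input as Lemma \ref{lem: 3.2}, obtained by combining Proposition \ref{prop: main result in section 2} with the Wolff-potential estimate of Proposition \ref{prop: C.2} applied to the majorant $w_{n}$ of (\ref{eq: equation of w_n}); you cite Proposition \ref{prop: C.1} (the splitting lemma), but it is Proposition \ref{prop: C.2} and the ensuing estimate of the integrals over $t\in(r,R)$ that produce the normalized $L^{\ga}$ bound at scale $\la_{n}^{-1/p}$. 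Without this step your outline has no quantitative input from which to start the iteration.
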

In order to prove Proposition \ref{prop: 3.1}, we need the following
lemma.
\begin{lem}
\label{lem: 3.2} Let $D$ be a bounded domain with $\Om\subset\subset D$
and $w_{n}$ the solution of equation (\ref{eq: equation of w_n}).
Then there exist a number $\ensuremath{\gamma>p-1}$ and a constant
$C>0$ independent of $n,$ such that
\begin{eqnarray*}
\Big(\frac{1}{r^{N}}\int_{B_{r}(y)\cap\Om}w_{n}^{\gamma}dx\Big)^{\frac{1}{\gamma}}\leq C, &  & \forall y\in\Om,
\end{eqnarray*}
for all $r\ge\bar{C}\la_{n}^{-\frac{1}{p}}$.\end{lem}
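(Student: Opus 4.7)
I begin by fixing an exponent $\gamma$ in the open interval $(p-1,\,p^*/p')$, where $p^*/p'=N(p-1)/(N-p)$; this interval is nonempty since $p^*/p'>p-1$. The upper endpoint is the critical threshold at which the standard Talenti-type profile $U$, with decay $|y|^{-(N-p)/(p-1)}$ at infinity, fails to be $L^{\gamma}$-integrable at infinity. A direct scaling computation on a single bubble $\rho_{x,\lambda}(U)$ gives
\[
\frac{1}{r^N}\int_{B_r(x)}\rho_{x,\lambda}(U)^\gamma\,dy \le C\,\lambda^{-(N-p)\gamma/(p(p-1))}\,r^{-(N-p)\gamma/(p-1)},
\]
which is uniformly bounded exactly in the regime $r\ge \bar C\,\lambda^{-1/p}$. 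This model computation is the scaling heuristic behind the lower bound on $r$ in the statement.

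Next I apply Proposition~\ref{prop: main result in section 2} with admissible parameters $p^*/p'<p_2<p^*<p_1$ to obtain $w_{n,1},w_{n,2}\ge 0$ such that $w_n\le w_{n,1}+w_{n,2}$, $\|w_{n,1}\|_{p_1}\le C$ and $\|w_{n,2}\|_{p_2}\le C\lambda_n^{N/p^*-N/p_2}$. Using the elementary inequality $w_n^\gamma\le C(w_{n,1}^\gamma+w_{n,2}^\gamma)$ and H\"older on each piece yields
\[
\frac{1}{r^N}\int_{B_r(y)\cap\Omega}w_{n,1}^\gamma\,dx \le C\,r^{-N\gamma/p_1}, \qquad \frac{1}{r^N}\int_{B_r(y)\cap\Omega}w_{n,2}^\gamma\,dx \le C\,r^{-N\gamma/p_2}\,\lambda_n^{\gamma(N/p^*-N/p_2)}.
\]
Substituting $r\ge\bar C\lambda_n^{-1/p}$, the first piece is dominated by $\lambda_n^{N\gamma/(pp_1)}$ and the second by $\lambda_n^{\gamma[N/p^*-N(p-1)/(pp_2)]}$. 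Since $p_2>p^*/p'$, the second exponent is strictly positive, so the direct H\"older estimate is not uniform in $n$; this is the feature that the abstract $L^{p_i}$ decomposition cannot detect.

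To close the argument I would invoke Proposition~\ref{prop: C.1}, which (as already used in the proofs of Lemmas~\ref{lem: 2.2} and~\ref{lem: Initiation}) furnishes a pointwise comparison
\[
w_n(x)\le C\inf_{B_\rho(x)}w_n + C\,W_n(x),
\]
at a suitable scale $\rho\sim r$, where $W_n$ is a Wolff-potential-type object built from the right-hand side of the equation satisfied by $w_n$. Bounding $\inf_{B_\rho(x)}w_n\le\big(\fint_{B_\rho(x)}w_n^{p_2}\big)^{1/p_2}$ and feeding the output of Proposition~\ref{prop: main result in section 2} back in transfers the bubble-type concentration structure of the source into genuine Morrey-type decay of $w_n$; the H\"older loss on $w_{n,2}$ is absorbed into the Wolff-potential contribution, and the desired estimate $(r^{-N}\int_{B_r(y)\cap\Omega}w_n^\gamma)^{1/\gamma}\le C$ follows.

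\textbf{Main obstacle.} The crux is that the decomposition from Proposition~\ref{prop: main result in section 2} records only $L^{p_i}$-integrability of its two pieces, which is too weak to reproduce the Morrey-type decay dictated by the bubble scaling: pure H\"older produces a positive exponent of $\lambda_n$ whenever $p_2>p^*/p'$. The key improvement is to exploit the underlying $p$-Laplacian structure, via Proposition~\ref{prop: C.1}, to convert abstract integrability into a genuine Morrey bound and thereby obtain a uniform-in-$n$ estimate for all balls with $r\ge\bar C\lambda_n^{-1/p}$.
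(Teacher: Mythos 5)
You have correctly identified the paper's strategy in outline: combine the Wolff-potential estimate from Appendix C with the decomposition norm bound of Proposition \ref{prop: main result in section 2}, after observing (rightly) that a direct H\"older estimate on the two pieces $w_{n,1},w_{n,2}$ cannot give a Morrey-type bound uniform in $n$ (in fact \emph{both} pieces fail this way, not only the second: $r^{-N\gamma/p_1}$ is also unbounded as $r\downarrow \bar C\la_n^{-1/p}$). But the proposal stops exactly where the proof begins. The assertion that ``the H\"older loss on $w_{n,2}$ is absorbed into the Wolff-potential contribution'' is the entire content of the lemma and is nowhere demonstrated. The correct tool is Proposition \ref{prop: C.2} (not Proposition \ref{prop: C.1}, which gives the splitting $w\le C\inf w + Cw_1+Cw_2$ and plays no role here): applied at radius $r$ it directly produces the left-hand side $\bigl(r^{-N}\int_{B_r(y)\cap\Om}w_n^\ga\bigr)^{1/\ga}$, bounded by $C$ plus the Wolff potential $\int_r^R\bigl(t^{p-N}\int_{B_t(y)}f_n\bigr)^{1/(p-1)}\frac{dt}{t}$ of the source $f_n=2\mu|u_n|^{p^*-1}+2|x|^{-s}|u_n|^{p^*(s)-1}+A|x|^{-s}$, and it also fixes $\ga\in(p-1,(p-1)N/(N-p+1))$, a smaller interval than your $(p-1,p^*/p')$.

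What is missing is the estimate of that Wolff potential, which is where the hypothesis $r\ge\bar C\la_n^{-1/p}$ and the flexibility in $(p_1,p_2)$ actually enter. One must choose $p_2=p^*-1$ for the Sobolev term (and $p_2=N(p^*(s)-1)/(N-s)$ for the Hardy term), so that $\int_{B_t(y)}|u_{n,2}|^{p^*-1}dx=\|u_{n,2}\|_{p_2}^{p_2}\le C\la_n^{(p-N)/p}$ uniformly in $t$; then
\[
\la_n^{\frac{p-N}{p(p-1)}}\int_r^\infty t^{\frac{p-N}{p-1}}\,\frac{dt}{t}\le C\la_n^{\frac{p-N}{p(p-1)}}\,r^{\frac{p-N}{p-1}}\le C
\]
precisely because $r\ge\bar C\la_n^{-1/p}$. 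For the $u_{n,1}$ piece one needs $p_1$ large enough that $\frac{N}{p-1}\bigl(1-\frac{p^*-1}{p_1}\bigr)+\frac{p-N}{p-1}>0$, so that the $dt/t$ integral converges at $t=0$; your observation that H\"older alone leaves a factor $\la_n^{N\ga/(pp_1)}$ is resolved not by a further structural input but by this convergent $t$-integration inside the Wolff potential. The Hardy-weighted term $I_2$ requires the same argument with the measure $d\mu_s$ and is not addressed in your proposal at all. Without these computations the proof is a plan, not an argument.
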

\begin{proof}
We will combine Proposition \ref{prop: main result in section 2}
and Proposition \ref{prop: C.2} to prove Lemma \ref{lem: 3.2}. Since
$w_{n}$ is the solution of equation (\ref{eq: equation of w_n}),
applying proposition \ref{prop: C.2} gives us a number $\ga\in(p-1,(p-1)N/(N-p+1))$
and a constant $C=C(N,p,\ga)$ such that
\[
\begin{aligned}\Big(\frac{1}{r^{N}}\int_{B_{r}(y)\cap\Om}w_{n}^{\gamma}dx\Big)^{\frac{1}{\gamma}} & \le C+C\int_{r}^{R}\left(\frac{1}{t^{N-p}}\int_{B_{t}(y)}\left(2\mu|u_{n}|^{p^{*}-1}+\frac{2|u_{n}|^{p^{*}(s)-1}}{|x|^{s}}+\frac{A}{|x|^{s}}\right)dx\right)^{\frac{1}{p-1}}\frac{dt}{t}\\
 & \le C+C\int_{r}^{R}\left(\frac{1}{t^{N-p}}\int_{B_{t}(y)}\left(|u_{n}|^{p^{*}-1}+\frac{|u_{n}|^{p^{*}(s)-1}}{|x|^{s}}\right)dx\right)^{\frac{1}{p-1}}\frac{dt}{t},
\end{aligned}
\]
for all $0<r<R$, where $R=\dist(\Om,\pa D)$. Let
\[
I_{1}=\int_{r}^{R}\left(\frac{1}{t^{N-p}}\int_{B_{t}(y)}|u_{n}|^{p^{*}-1}dx\right)^{\frac{1}{p-1}}\frac{dt}{t}
\]
and
\[
I_{2}=\int_{r}^{R}\left(\frac{1}{t^{N-p}}\int_{B_{t}(y)}\frac{|u_{n}|^{p^{*}(s)-1}}{|x|^{s}}dx\right)^{\frac{1}{p-1}}\frac{dt}{t}
\]
such that
\begin{equation}
\Big(\frac{1}{r^{N}}\int_{B_{r}(y)\cap\Om}w_{n}^{\gamma}dx\Big)^{\frac{1}{\gamma}}\le C+CI_{1}+CI_{2}.\label{eq: 3.2.1}
\end{equation}
We now estimate $I_{1}$ and $I_{2}$ for $r\ge\bar{C}\la_{n}^{-1/p}$.

By Proposition \ref{prop: main result in section 2}, $\|u_{n}\|_{\ast,p_{1},p_{2},\la}\le C$
for any $p_{1},p_{2}\in(p^{*}/p^{\prime},\wq)$, $p_{2}<p^{*}<p_{1}$.

Let $p_{1}>p^{*}$ be a number to be determined and $p_{2}=p^{*}-1$.
There exist $u_{n,1}$, $u_{n,2}$ with $|u_{n}|\le u_{n,1}+u_{n,2}$
such that $\|u_{n,1}\|_{\ast,p_{1}}\le C$ and $\|u_{n,2}\|_{\ast,p_{2}}\le C\la_{n}^{\frac{N}{p^{*}}-\frac{N}{p_{2}}}$.
Then
\[
\int_{B_{t}(y)}|u_{n,1}|^{p^{*}-1}dx\le C\left(\int_{B_{t}(y)}|u_{n,1}|^{p_{1}}dx\right)^{\frac{p^{*}-1}{p_{1}}}|B_{t}(y)|^{1-\frac{p^{*}-1}{p_{1}}}\le Ct^{\left(1-\frac{p^{*}-1}{p_{1}}\right)N},
\]
and
\[
\int_{B_{t}(y)}|u_{n,2}|^{p^{*}-1}dx=\int_{B_{t}(y)}|u_{n,2}|^{p_{2}}dx\le C\la_{n}^{\left(\frac{N}{p^{*}}-\frac{N}{p_{2}}\right)p_{2}}=C\la_{n}^{\frac{p-N}{p}}.
\]
Thus
\begin{eqnarray*}
\int_{B_{t}(y)}|u_{n}|^{p^{*}-1}dx & \le & C\int_{B_{t}(y)}|u_{n,1}|^{p^{*}-1}dx+C\int_{B_{t}(y)}|u_{n,2}|^{p^{*}-1}dx\\
 & \le & Ct^{\left(1-\frac{p^{*}-1}{p_{1}}\right)N}+C\la_{n}^{\frac{p-N}{p}}.
\end{eqnarray*}
Since $\frac{N}{p-1}\left(1-\frac{p^{*}-1}{p_{1}}\right)+\frac{p-N}{p-1}\to\frac{p}{p-1}$
as $p_{1}\to\wq$, we can choose $p_{1}>p^{*}$ large enough such
that $\frac{N}{p-1}\left(1-\frac{p^{*}-1}{p_{1}}\right)+\frac{p-N}{p-1}>0$.
Then
\[
\int_{0}^{R}t^{\left(1-\frac{p^{*}-1}{p_{1}}\right)\frac{N}{p-1}+\frac{p-N}{p-1}}\frac{dt}{t}<C.
\]
Note also that for $r\ge\bar{C}\la_{n}^{-1/p}$, we have
\[
\int_{r}^{\wq}t^{\frac{p-N}{p-1}}\frac{dt}{t}\le C\la_{n}^{\frac{N-p}{p(p-1)}}.
\]
 Therefore
\begin{eqnarray}
I_{1} & \le & \int_{r}^{R}\left(Ct^{\left(1-\frac{p^{*}-1}{p_{1}}\right)N}+C\la_{n}^{\frac{p-N}{p}}\right)^{\frac{1}{p-1}}t^{\frac{p-N}{p-1}}\frac{dt}{t}\nonumber \\
 & \le & C\int_{0}^{R}t^{\frac{N}{p-1}\left(1-\frac{p^{*}-1}{p_{1}}\right)+\frac{p-N}{p-1}}\frac{dt}{t}+C\la_{n}^{\frac{p-N}{p(p-1)}}\int_{r}^{\wq}t^{\frac{p-N}{p-1}}\frac{dt}{t}\label{eq: 3.2.2}\\
 & \le & C.\nonumber
\end{eqnarray}
This gives estimate for $I_{1}$.

Next we estimate $I_{2}$. Let $p_{1}>p^{*}$ to be determined and
$p_{2}=N\left(p^{*}(s)-1\right)/(N-s)$. There exist $\bar{u}_{n,1}$,
$\bar{u}_{n,2}$ with $|u_{n}|\le\bar{u}_{n,1}+\bar{u}_{n,2}$ such
that $\|\bar{u}_{n,1}\|_{\ast,p_{1}}\le C$ and $\|\bar{u}_{n,2}\|_{\ast,p_{2}}\le C\la_{n}^{\frac{N}{p^{*}}-\frac{N}{p_{2}}}$.
Then
\begin{eqnarray*}
\int_{B_{t}(y)}|\bar{u}_{n,1}|^{p^{*}(s)-1}d\mu_{s} & \le & \left(\int_{B_{t}(y)}|\bar{u}_{n,1}|^{\frac{N-s}{N}p_{1}}d\mu_{s}\right)^{\frac{\left(p^{*}(s)-1\right)N}{(N-s)p_{1}}}\left(\int_{B_{t}(y)}d\mu_{s}\right)^{1-\frac{\left(p^{*}(s)-1\right)N}{(N-s)p_{1}}}\\
 & \le & Ct^{N-s-\frac{\left(p^{*}(s)-1\right)N}{p_{1}}},
\end{eqnarray*}
and
\[
\int_{B_{t}(y)}|\bar{u}_{n,2}|^{p^{*}(s)-1}d\mu_{s}=\int_{B_{t}(y)}|\bar{u}_{n,2}|^{\frac{N-s}{N}p_{2}}d\mu_{s}\le C\la_{n}^{\frac{p-N}{p}}.
\]
Arguing as above yields that
\begin{equation}
I_{2}\le C,\label{eq: 3.2.3}
\end{equation}
if we choose $p_{1}$ large enough. This gives estimate for $I_{2}$.

Combining (\ref{eq: 3.2.1})-(\ref{eq: 3.2.3}), we complete the proof
of Lemma \ref{lem: 3.2}.
\end{proof}
Now we can prove Proposition \ref{prop: 3.1}.

\begin{proof}[Proof of Proposition \ref{prop: 3.1}] Let $\ga>p-1$
be as in Lemma \ref{lem: 3.2}. Since $|u_{n}|\le w_{n}$, we have
\begin{eqnarray}
\int_{B_{\la_{n}^{-1/p}}(y)}|u_{n}|^{\ga}dx\leq C\la_{n}^{-\frac{N}{p}}, &  & \forall y\in{\cal A}_{n}^{2}.\label{eq: 3.1.0}
\end{eqnarray}

Let $v_{n}(x)=u_{n}(\la_{n}^{-\frac{1}{p}}x)$, $x\in\Om_{n}=\{x;\la_{n}^{-\frac{1}{p}}x\in\Om\}$.
Then $v_{n}$ is a solution to equation
\[
\begin{cases}
{\displaystyle -\Delta_{p}v_{n}=\la_{n}^{-1}\left(\mu|v_{n}|^{p^{*}-p-\ep_{n}}+\frac{\la_{n}^{\frac{s}{p}}|v_{n}|^{p^{*}(s)-p-\ep_{n}}}{|x|^{s}}+a(\la_{n}^{-\frac{1}{p}}x)\right)|v_{n}|^{p-2}v_{n}}, & x\in\Om_{n},\\
v_{n}=0, & \text{on }\pa\Om_{n}.
\end{cases}
\]

Let $\ensuremath{z=\la_{n}^{\frac{1}{p}}y}$, $y\in{\cal A}_{n}^{2}$.
Since $B_{\la_{n}^{-1/p}}(y)$ does not contain any concentration
point of $u_{n}$, we can deduce that \begin{eqnarray*} \int_{B_{1}(z)}\big|\lambda_{n}^{-1}\big(\mu|v_{n}|^{p^{*}-p-\epsilon_{n}} +a(\lambda_{n}^{-\frac{1}{p}}x)\big)\big|^{\frac{N}{p}}dx &\leq& C\int_{B_{1}(z)} |\lambda_{n}^{-1}(|v_{n}|^{p^{*}-p}+1)|^{\frac{N}{p}}dx \\ &\leq& C\int_{B_{\lambda_{n}^{-\frac{1}{p}}}(y)} |u_{n}|^{p^{*}}dx+C\lambda_{n}^{-\frac{N}{p}} \rightarrow 0, \end{eqnarray*}and
that \begin{eqnarray*} \int_{B_{1}(z)}\frac{|\lambda_{n}^{-1}\lambda_{n}^{\frac{s}{p}} |v_{n}|^{p^{*}(s)-p-\epsilon_{n}}|^{\frac{N-s}{p-s}}}{|x|^{s}}dx  &\leq&C\int_{B_{1}(z)}\frac{|\lambda_{n}^{\frac{s-p}{p}} (|v_{n}|^{p^{*}(s)-p}+1)|^{\frac{N-s}{p-s}}}{|x|^{s}}dx \\ &\leq&C\int_{B_{\lambda_{n}^{-\frac{1}{p}}}(y)} \frac{|u_{n}|^{p^{*}(s)}}{|x|^{s}}dx+C\lambda_{n}^{-\frac{N-s}{p}} \rightarrow 0, \end{eqnarray*} as
$n\to\wq$.

Thus for any $q>p^{*}$, we obtain by Lemma \ref{lem: A.4} and (\ref{eq: 3.1.0})
that,
\[
\|v_{n}\|_{q,B_{1/2}(z)}\le C\left(\int_{B_{1}(z)}|v_{n}|^{\ga}dx\right)^{\frac{1}{\ga}}=C\left(\fint_{B_{\la_{n}^{-1/p}}(y)}|u_{n}|^{\ga}dx\right)^{\frac{1}{\ga}}\le C.
\]
Equivalently, we arrive at
\begin{eqnarray*}
\int_{B_{\frac{1}{2}\la_{n}^{-1/p}}(y)}|u_{n}|^{q}dx\leq C\la_{n}^{-\frac{N}{p}}, &  & \forall y\in{\cal A}_{n}^{2}.
\end{eqnarray*}
Now a simple covering argument proves Proposition \ref{prop: 3.1}
in the case when $q>p^{*}$.

If $p\le q\le p^{*}<2p^{*}$, we apply H\"older's inequality to obtain
that
\[
\left(\fint_{{\cal A}_{n}^{2}}|u_{n}|^{q}dx\right)^{\frac{1}{q}}\le\left(\fint_{{\cal A}_{n}^{2}}|u_{n}|^{2p^{*}}dx\right)^{\frac{1}{2p^{*}}}\le C.
\]
We complete the proof of Proposition \ref{prop: 3.1}.\end{proof}

Let
\[
\mathcal{A}_{n}^{3}=\Big(B_{(\bar{C}+3)\Lambda_{n}^{-\frac{1}{p}}}(x_{n})\backslash B_{(\bar{C}+2)\Lambda_{n}^{-\frac{1}{p}}}(x_{n})\Big)\cap\Omega.
\]
In the end of this section, we prove the following estimate for $u_{n}$.
\begin{prop}
\label{prop: 3.3 gradient estimate} We have
\begin{equation}
\int_{\mathcal{A}_{n}^{3}}|\nabla u_{n}|^{p}dx\leq C\int_{\mathcal{A}_{n}^{2}}\Big(|u_{n}|^{p^{*}}+\frac{|u_{n}|^{p^{*}(s)}}{|x|^{s}}+1\Big)dx+C\la_{n}\int_{\mathcal{A}_{n}^{2}}|u_{n}|^{p}dx.\label{eq: 3.3.1}
\end{equation}
In particular, we have
\begin{equation}
\int_{\mathcal{A}_{n}^{3}}|\nabla u_{n}|^{p}dx\leq C\la_{n}^{\frac{p-N}{p}}.\label{eq:3.3.2}
\end{equation}
\end{prop}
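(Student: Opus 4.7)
The plan is a standard Caccioppoli-type argument combined with the integrability bounds of Proposition \ref{prop: 3.1}. The target annulus $\mathcal{A}_n^3$ sits strictly inside $\mathcal{A}_n^2$ with a buffer of thickness comparable to $\la_n^{-1/p}$, so a cutoff supported in $\mathcal{A}_n^2$ and equal to $1$ on $\mathcal{A}_n^3$ can be chosen with gradient of size $O(\la_n^{1/p})$; this is the origin of the factor $\la_n$ that appears in (\ref{eq: 3.3.1}).

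Concretely, I would choose a radial $\eta\in C_{0}^{\infty}(\R^{N})$ depending only on $|x-x_{n}|$, with $\eta\equiv1$ on $\{(\bar C+2)\la_n^{-1/p}\leq|x-x_n|\leq(\bar C+3)\la_n^{-1/p}\}$, $\eta\equiv 0$ outside $\{(\bar C+1)\la_n^{-1/p}\leq|x-x_n|\leq(\bar C+4)\la_n^{-1/p}\}$, and $|\nabla\eta|\leq C\la_n^{1/p}$. Because $u_n\in W_0^{1,p}(\Om)$, the product $\eta^{p}u_{n}$ is an admissible test function in the weak formulation of (\ref{1.3}), and its support in $\Om$ lies inside $\mathcal{A}_n^2$.

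Inserting $\eta^{p}u_{n}$ and expanding $\nabla(\eta^{p}u_{n})=\eta^{p}\nabla u_{n}+p\eta^{p-1}u_{n}\nabla\eta$, I would use Young's inequality with exponents $p/(p-1)$ and $p$ to estimate
\[
\Bigl|p\int_{\Om}\eta^{p-1}u_{n}|\nabla u_{n}|^{p-2}\nabla u_{n}\cdot\nabla\eta\,dx\Bigr|\leq\tfrac{1}{2}\int_{\Om}\eta^{p}|\nabla u_{n}|^{p}\,dx+C\int_{\Om}|\nabla\eta|^{p}|u_{n}|^{p}\,dx,
\]
and absorb the first term into the left side. Applying the pointwise bounds $|u_{n}|^{p^{*}-\ep_{n}}\leq|u_{n}|^{p^{*}}+1$ and $|u_{n}|^{p^{*}(s)-\ep_{n}}\leq|u_{n}|^{p^{*}(s)}+1$, together with $|\nabla\eta|^{p}\leq C\la_{n}$, boundedness of $a$, and the fact that $|x|^{-s}$ is bounded on $\mathcal{A}_n^2$ either by a constant (when $x_n$ stays away from $0$) or by $C\la_{n}^{s/p}$ (when $x_{n}=0$), yields (\ref{eq: 3.3.1}).

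For (\ref{eq:3.3.2}) I would invoke Proposition \ref{prop: 3.1} with $q=p$, $q=p^{*}$, and $q=p^{*}(s)$ to get $\int_{\mathcal{A}_n^2}|u_n|^q\,dx\leq C\la_n^{-N/p}$; combined with $|\mathcal{A}_n^2|\leq C\la_n^{-N/p}$ and $s<p$, every term on the right of (\ref{eq: 3.3.1}) is $\leq C\la_n^{(p-N)/p}$, with the dominant contribution coming from $C\la_{n}\int_{\mathcal{A}_{n}^{2}}|u_{n}|^{p}\,dx$. The only mildly delicate point is the Hardy weight on $\mathcal{A}_n^2$ when $x_n=0$: the loss of $\la_n^{s/p}$ is swallowed by the dominant $\la_n^{(p-N)/p}$ term precisely because $s<p$. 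Beyond this, the argument is routine, so the principal task is just the careful bookkeeping of exponents.
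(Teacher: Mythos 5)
Your Caccioppoli argument for (\ref{eq: 3.3.1}) is exactly the paper's: test the equation with $\phi^{p}u_{n}$ for a cutoff $\phi$ supported in $\mathcal{A}_{n}^{2}$ with $\phi\equiv1$ on $\mathcal{A}_{n}^{3}$ and $|\nabla\phi|\le C\la_{n}^{1/p}$, absorb the cross term by Young's inequality, and use $|u_{n}|^{p^{*}-\ep_{n}}\le|u_{n}|^{p^{*}}+1$ and $|u_{n}|^{p^{*}(s)-\ep_{n}}\le|u_{n}|^{p^{*}(s)}+1$. That part is fine; note that for (\ref{eq: 3.3.1}) itself you need no bound on $|x|^{-s}$ at all, since the Hardy term appears verbatim on the right-hand side.

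The gap is in your treatment of $\int_{\mathcal{A}_{n}^{2}}|x|^{-s}|u_{n}|^{p^{*}(s)}dx$ for (\ref{eq:3.3.2}). Your dichotomy ``either $x_{n}$ stays away from $0$, so $|x|^{-s}\le C$, or $x_{n}=0$, so $|x|^{-s}\le C\la_{n}^{s/p}$'' omits the case $x_{n}\to0$ with $x_{n}\ne0$. Nothing in Proposition \ref{prop:D.1} prevents the slowest concentration point from satisfying, say, $|x_{n}|=(\bar{C}+2)\la_{n}^{-1/p}$, in which case the origin lies inside $\mathcal{A}_{n}^{2}$ and $|x|^{-s}$ is unbounded there; more generally, whenever $\dist(0,\mathcal{A}_{n}^{2})=o(\la_{n}^{-1/p})$ the pointwise bound $|x|^{-s}\le C\la_{n}^{s/p}$ fails. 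The paper closes this case with H\"older's inequality at an exponent $q>p^{*}(s)$: by Proposition \ref{prop: 3.1}, $\int_{\mathcal{A}_{n}^{2}}|u_{n}|^{q}dx\le C\la_{n}^{-N/p}$, while $\int_{\mathcal{A}_{n}^{2}}|x|^{-\frac{sq}{q-p^{*}(s)}}dx\le C\la_{n}^{-\frac{1}{p}\left(N-\frac{sq}{q-p^{*}(s)}\right)}$ once $q$ is chosen large enough that $\frac{sq}{q-p^{*}(s)}<N$ (the integral of this radially decreasing weight over any ball of radius $\sim\la_{n}^{-1/p}$ is dominated by the case of a ball centered at the origin). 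Multiplying the two factors gives $C\la_{n}^{(s-N)/p}\le C\la_{n}^{(p-N)/p}$, uniformly in the position of $x_{n}$ relative to the origin. With this substitution the rest of your bookkeeping for (\ref{eq:3.3.2}) goes through unchanged.
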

\begin{proof}
Let $\phi\in C_{0}^{\infty}(\mathcal{A}_{n}^{2})$ be a cut-off function
with $\phi=1$ in $\mathcal{A}_{n}^{3}$, $0\leq\phi\leq1$ and $|\nabla\phi|\leq C\la_{n}^{\frac{1}{p}}$.
Multiplying the equation of $u_{n}$ by $\phi^{p}u_{n}$ yields that
\[
\int_{{\cal A}_{n}^{2}}|\na u_{n}|^{p-2}\na u_{n}\cdot\na(\phi^{p}u_{n})dx=\int_{{\cal A}_{n}^{2}}\left(\mu|u_{n}|^{p^{*}-2-\ep_{n}}u_{n}+\frac{|u_{n}|^{p^{*}(s)-2-\ep_{n}}u_{n}}{|x|^{s}}+a|u_{n}|^{p-2}u_{n}\right)\phi^{p}u_{n}dx.
\]
 It is easy to derive (\ref{eq: 3.3.1}) from the equality above.

Let $q>p^{*}(s)$. By Proposition \ref{prop: 3.1}, we have
\begin{eqnarray}
\int_{{\cal A}_{n}^{2}}\frac{\phi^{p}|u_{n}|^{p^{*}(s)}}{|x|^{s}}dx & \le & \left(\int_{{\cal A}_{n}^{2}}\phi^{p}|u_{n}|^{q}dx\right)^{\frac{p^{*}(s)}{q}}\left(\int_{{\cal A}_{n}^{2}}\phi^{p}|x|^{-\frac{sq}{q-p^{*}(s)}}dx\right)^{\frac{q-p^{*}(s)}{q}}\nonumber \\
 & \le & C\la_{n}^{\frac{p^{*}(s)N}{pq}}\la_{n}^{-\frac{1}{p}\left(N-\frac{sq}{q-p^{*}(s)}\right)\left(\frac{q-p^{*}(s)}{q}\right)}\label{eq: 3.3.3}\\
 & = & C\la_{n}^{\frac{s-N}{p}}.\nonumber
\end{eqnarray}
Now from (\ref{eq: 3.3.3}), (\ref{eq: 3.3.1}) and Proposition \ref{prop: 3.1},
we obtain that
\[
\int_{\mathcal{A}_{n}^{3}}|\nabla u_{n}|^{p}dx\le C\la_{n}^{-\frac{N}{p}}+C\la_{n}^{\frac{p-N}{p}}+C\la_{n}^{\frac{s-N}{p}}\le C\la_{n}^{\frac{p-N}{p}}.
\]
This proves (\ref{eq:3.3.2}). We finish the proof.
\end{proof}

\section{Proof of main results}

In this section we prove Theorem \ref{thm: main result} and Theorem
\ref{thm: infinitely many solutions}.

For simplicity, write $p_{n}=p^{*}-\ep_{n}$ and $p_{n}(s)=p^{*}(s)-\ep_{n}$.
Choose $t_{n}\in[\bar{C}+2,\bar{C}+3]$ such that
\begin{equation}
\begin{array}{ll}
 & {\displaystyle \int_{\partial B_{t_{n}\lambda_{n}^{-\frac{1}{p}}}(x_{n})}\Bigl(\mu|u_{n}|^{p_{n}}+|u_{n}|^{p}+\lambda_{n}^{-1}|\nabla u_{n}|^{p}+\lambda_{n}^{-\frac{s}{p}}\frac{|u_{n}|^{p_{n}(s)}}{|x|^{s}}\Bigl)d\sigma}\vspace{2mm}\\
\leq & {\displaystyle C\lambda_{n}^{\frac{1}{p}}\int_{\mathcal{A}_{n}^{3}}\Bigl(\mu|u_{n}|^{p_{n}}+|u_{n}|^{p}+\lambda_{n}^{-1}|\nabla u_{n}|^{p}+\lambda_{n}^{-\frac{s}{p}}\frac{|u_{n}|^{p_{n}(s)}}{|x|^{s}}\Bigl)dx.}
\end{array}\label{eq: 4.1}
\end{equation}
By Proposition \ref{prop: 3.1}, (\ref{eq:3.3.2}) and (\ref{eq: 3.3.3}),
we obtain that
\begin{equation}
\int_{\partial B_{t_{n}\lambda_{n}^{-\frac{1}{p}}}(x_{n})}\left(\mu|u_{n}|^{p_{n}}+|u_{n}|^{p}+\lambda_{n}^{-1}|\nabla u_{n}|^{p}+\lambda_{n}^{-\frac{s}{p}}\frac{|u_{n}|^{p_{n}(s)}}{|x|^{s}}\right)d\sigma\le C\la_{n}^{\frac{1-N}{p}}.\label{eq: 4.2}
\end{equation}

We also have the following Pohozaev identity for $u_{n}$ on $B_{n}=B_{t_{n}\lambda_{n}^{-\frac{1}{p}}}(x_{n})\cap\Om$
\begin{eqnarray*}
 &  & \left(\frac{N}{p_{n}}-\frac{N-p}{p}\right)\mu\int_{B_{n}}|u_{n}|^{p_{n}}dx+\int_{B_{n}}\Big[a(x)-\frac{1}{p}\nabla a(x)\cdot(x-x_{0})\Big]|u_{n}|^{p}dx\vspace{2mm}\\
 &  & +\left(\frac{N-s}{p_{n}(s)}-\frac{N-p}{p}\right)\int_{B_{n}}\frac{|u_{n}|^{p_{n}(s)}}{|x|^{s}}dx+\frac{s}{p_{n}(s)}\int_{B_{n}}\frac{|u_{n}|^{p_{n}(s)}}{|x|^{2+s}}(x_{0}\cdot x)dx\vspace{2mm}\\
 & = & \frac{N-p}{p}\int_{\partial B_{n}}|\nabla u_{n}|^{p-2}\frac{\partial u_{n}}{\partial\nu}u_{n}d\sigma+\int_{\partial B_{n}}|\nabla u_{n}|^{p-2}\nabla u_{n}\cdot(x-x_{0})\frac{\partial u_{n}}{\partial\nu}d\sigma\vspace{2mm}\\
 &  & \,\,\,-\frac{1}{p}\int_{\partial B_{n}}|\nabla u_{n}|^{p}(x-x_{0})\cdot\nu d\sigma\vspace{2mm}\\
 &  & \,\,\,+\int_{\partial B_{n}}(x-x_{0})\cdot\nu\Big[\frac{1}{p_{n}}|u_{n}|^{p_{n}}+\frac{1}{p_{n}(s)}\frac{|u_{n}|^{p_{n}(s)}}{|x|^{s}}+\frac{1}{p}a(x)|u_{n}|^{p}\Big]d\sigma,
\end{eqnarray*}
where $\nu$ is the outward unit normal to $\partial B_{n}$ and $x_{0}\in\R^{N}$.
Since $p_{n}<p^{*}$ and $p_{n}(s)<p^{*}(s)$, we have the following
inequality from above
\begin{equation}
\begin{aligned} & \int_{B_{n}}\Big[a(x)-\frac{1}{p}\nabla a(x)\cdot(x-x_{0})\Big]|u_{n}|^{p}dx+\frac{s}{p_{n}(s)}\int_{B_{n}}\frac{|u_{n}|^{p_{n}(s)}}{|x|^{2+s}}(x_{0}\cdot x)dx\vspace{4mm}\\
\le\; & \frac{N-p}{p}{\displaystyle \int_{\partial B_{n}}|\nabla u_{n}|^{p-2}\frac{\partial u_{n}}{\partial\nu}u_{n}d\sigma}+\int_{\partial B_{n}}|\nabla u_{n}|^{p-2}\nabla u_{n}\cdot(x-x_{0})\frac{\partial u_{n}}{\partial\nu}d\sigma\vspace{4mm}\\
 & \,\,\,-\frac{1}{p}\int_{\partial B_{n}}|\nabla u_{n}|^{p}(x-x_{0})\cdot\nu d\sigma\vspace{4mm}\\
 & \,\,\,+\int_{\partial B_{n}}(x-x_{0})\cdot\nu\Big[\frac{1}{p_{n}}|u_{n}|^{p_{n}}+\frac{1}{p_{n}(s)}\frac{|u_{n}|^{p_{n}(s)}}{|x|^{s}}+\frac{1}{p}a(x)|u_{n}|^{p}\Big]d\sigma.
\end{aligned}
\label{eq: 4.4}
\end{equation}
 Now we can prove Theorem \ref{thm: main result}.

\begin{proof}[Proof of Theorem \ref{thm: main result}] Since $\{x_{n}\}\subset\Om$
is a bounded sequence, we may assume that $x_{n}\to x^{*}\in\bar{\Om}$
as $n\to\wq$. We have two cases:

\noindent\emph{Case 1. $x^{*}=0$;}

\noindent\emph{Case 2. $x^{*}\neq0$. }

In Case 1, choose $x_{0}=0$ in (\ref{eq: 4.4}). Then we obtain that
\begin{equation}
\begin{aligned} & \int_{B_{n}}\Big[a(x)-\frac{1}{p}\nabla a(x)\cdot x\Big]|u_{n}|^{p}dx\vspace{4mm}\\
\le\: & \frac{N-p}{p}{\displaystyle \int_{\partial B_{n}}|\nabla u_{n}|^{p-2}\frac{\partial u_{n}}{\partial\nu}u_{n}d\sigma}+\int_{\partial B_{n}}|\nabla u_{n}|^{p-2}\nabla u_{n}\cdot x\frac{\partial u_{n}}{\partial\nu}d\sigma\vspace{4mm}\\
 & \,\,\,-\frac{1}{p}\int_{\partial B_{n}}|\nabla u_{n}|^{p}x\cdot\nu d\sigma\vspace{4mm}\\
 & \,\,\,+\int_{\partial B_{n}}x\cdot\nu\Big[\frac{1}{p_{n}}|u_{n}|^{p_{n}}+\frac{1}{p_{n}(s)}\frac{|u_{n}|^{p_{n}(s)}}{|x|^{s}}+\frac{1}{p}a(x)|u_{n}|^{p}\Big]d\sigma.
\end{aligned}
\label{eq: 4.5}
\end{equation}

Decompose $\pa B_{n}$ by $\pa B_{n}=\pa_{i}B_{n}\cup\pa_{e}B_{n}$,
where $\pa_{i}B_{n}=\pa B_{n}\cap\Om$ and $\pa_{e}B_{n}=\pa B_{n}\cap\pa\Om$.

Consider the case $0\in\pa\Om$ first. Observe that $u_{n}=0$ on
$\pa\Om$. Thus (\ref{eq: 4.5}) implies that
\begin{equation}
\begin{aligned}L_{1}:= & \int_{B_{n}}\Big[a(x)-\frac{1}{p}\nabla a(x)\cdot x\Big]|u_{n}|^{p}dx-\left(1-\frac{1}{p}\right)\int_{\partial_{e}B_{n}}|\nabla u_{n}|^{p}x\cdot\nu d\sigma\vspace{4mm}\\
\le & \frac{N-p}{p}{\displaystyle \int_{\pa_{i}B_{n}}|\nabla u_{n}|^{p-2}\frac{\partial u_{n}}{\partial\nu}u_{n}d\sigma}+\int_{\pa_{i}B_{n}}|\nabla u_{n}|^{p-2}\nabla u_{n}\cdot x\frac{\partial u_{n}}{\partial\nu}d\sigma\vspace{4mm}\\
 & \,\,\,-\frac{1}{p}\int_{\pa_{i}B_{n}}|\nabla u_{n}|^{p}x\cdot\nu d\sigma\vspace{4mm}\\
 & \,\,\,+\int_{\pa_{i}B_{n}}x\cdot\nu\Big[\frac{1}{p_{n}}|u_{n}|^{p_{n}}+\frac{1}{p_{n}(s)}\frac{|u_{n}|^{p_{n}(s)}}{|x|^{s}}+\frac{1}{p}a(x)|u_{n}|^{p}\Big]d\sigma.\\
 & =:R_{1}.
\end{aligned}
\label{eq: 4.6}
\end{equation}
By assumption (\ref{condition of WX}), we have
\[
\int_{\partial_{e}B_{n}}|\nabla u_{n}|^{p}x\cdot\nu d\sigma\le0.
\]
Also note that $a(0)>0$. Thus (\ref{eq: 4.6}) gives us
\begin{equation}
L_{1}\ge\frac{1}{2}a(0)\int_{B_{n}}|u_{n}|^{p}dx.\label{eq: 4.7.1}
\end{equation}
On the other hand, since $|x|\le C\la_{n}^{-1/p}$ for $x\in\pa B_{n}$,
by (\ref{eq: 4.2}), we have
\begin{eqnarray}
R_{1} & \le & C\la_{n}^{-\frac{1}{p}}\int_{\pa_{i}B_{n}}\Bigl(|u_{n}|^{p_{n}}+|u_{n}|^{p}+|\nabla u_{n}|^{p}+\frac{|u_{n}|^{p_{n}(s)}}{|x|^{s}}\Bigl)d\sigma\nonumber \\
 &  & +C\int_{\pa_{i}B_{n}}|\na u_{n}|^{p-1}|u_{n}|d\si\label{eq: 4.7.2}\\
 & \le & C\la_{n}^{\frac{p-N}{p}}.\nonumber
\end{eqnarray}
Thus combining (\ref{eq: 4.6}), (\ref{eq: 4.7.1}) and (\ref{eq: 4.7.2})
implies that
\begin{equation}
\int_{B_{n}}|u_{n}|^{p}dx\le C\la_{n}^{\frac{p-N}{p}}.\label{eq: 4.8}
\end{equation}
Now arguing as that of \cite{CaoPengYan2012}, we have
\[
\int_{B_{n}}|u_{n}|^{p}dx\ge C^{\prime}\la_{n}^{-p}.
\]
Therefore we arrive at
\begin{equation}
\la_{n}^{-p}\le C\la_{n}^{\frac{p-N}{p}}.\label{eq: 4.12}
\end{equation}
Since $\la_{n}\to\wq$, (\ref{eq: 4.12}) can not happen under the
assumption that
\[
N>p^{2}+p.
\]

The case $0\in\Om$ turns out to be easier than the previous case
since $\pa_{e}B_{n}=\emptyset$ now. So (\ref{eq: 4.6}) holds as
well with $\pa_{i}B_{n}=\pa B_{n}$. Arguing as above, we get a contradiction.
So we complete the proof of Theorem \ref{thm: main result} in Case
1.

Now we consider Case 2. That is, $x^{*}\ne0$. We have two possibilities:
either $B_{t_{n}\lambda_{n}^{-\frac{1}{p}}}(x_{n})\subset\subset\Omega$
or $B_{t_{n}\lambda_{n}^{-\frac{1}{p}}}(x_{n})\cap(\R^{N}\backslash\Omega)\neq\emptyset$.

Suppose that $B_{t_{n}\lambda_{n}^{-\frac{1}{p}}}(x_{n})\subset\subset\Omega$.
Then $B_{n}=B_{t_{n}\lambda_{n}^{-\frac{1}{p}}}(x_{n})$. We take
$x_{0}=x_{n}$ in (\ref{eq: 4.4}) and obtain that
\begin{equation}
\begin{aligned}L_{2}:= & \frac{s}{p_{n}(s)}\int_{B_{n}}\frac{|u_{n}|^{p_{n}(s)}}{|x|^{2+s}}(x_{n}\cdot x)dx\vspace{4mm}\\
\le & -\int_{B_{n}}\Big[a(x)-\frac{1}{p}\nabla a(x)\cdot(x-x_{n})\Big]|u_{n}|^{p}dx\\
 & +\frac{N-p}{p}{\displaystyle \int_{\pa B_{n}}|\nabla u_{n}|^{p-2}\frac{\partial u_{n}}{\partial\nu}u_{n}d\sigma}+\int_{\pa B_{n}}|\nabla u_{n}|^{p-2}\nabla u_{n}\cdot(x-x_{n})\frac{\partial u_{n}}{\partial\nu}d\sigma\vspace{4mm}\\
 & \,\,\,-\frac{1}{p}\int_{\pa B_{n}}|\nabla u_{n}|^{p}(x-x_{n})\cdot\nu d\sigma\vspace{4mm}\\
 & \,\,\,+\int_{\pa B_{n}}(x-x_{n})\cdot\nu\Big[\frac{1}{p_{n}}|u_{n}|^{p_{n}}+\frac{1}{p_{n}(s)}\frac{|u_{n}|^{p_{n}(s)}}{|x|^{s}}+\frac{1}{p}a(x)|u_{n}|^{p}\Big]d\sigma.\\
 & =:R_{2}.
\end{aligned}
\label{eq: 4.14}
\end{equation}
Since $x_{n}\to x^{*}$, we have $x_{n}\cdot x\ge\frac{1}{2}|x_{n}|^{2}\ge\frac{1}{4}|x^{*}|^{2}$.
Thus
\[
L_{2}\ge C\int_{B_{n}}|u_{n}|^{p_{n}(s)}dx.
\]
Again, applying the same argument as that of \cite{CaoPengYan2012}
gives us that
\begin{equation}
L_{2}\ge C^{\prime}\la_{n}^{-N+p_{n}(s)\frac{N-p}{p}}.\label{eq: 4.15}
\end{equation}
On the other hand, by arguing as before, we easily get that
\begin{equation}
R_{2}\le C\la_{n}^{\frac{p-N}{p}}+C\int_{B_{n}}|u_{n}|^{p}dx,\label{eq: 4.16}
\end{equation}
in which the assumption $a\in C^{1}(\bar{\Om})$ was used. We claim
that
\begin{equation}
\int_{B_{n}}|u_{n}|^{p}dx\le C\la_{n}^{-p}.\label{eq: 4.17}
\end{equation}
Indeed, let $p_{1}>p^{*}$ such that $\frac{N}{p}(1-\frac{p}{p_{1}})>p$.
This is possible since $N>p^{2}+p$. Also, let $p_{2}=p$. Then we
have $p^{*}/p^{\prime}<p_{2}<p^{*}$. By proposition \ref{prop: main result in section 2},
there exist $v_{i}\geq0,i=1,2,$ such that $|u_{n}|\leq v_{1}+v_{2}$
and
\[
\|v_{1}\|_{*,p_{1}}\leq C,\,\,\,\|v_{2}\|_{*,p}\leq C\lambda_{n}^{\frac{N-p}{p}-\frac{N}{p}}=C\la_{n}^{-1}.
\]
Hence
\[
\int_{B_{n}}|u_{n}|^{p}dx\leq2^{p-1}\int_{B_{n}}|v_{1}|^{p}dx+2^{p-1}\int_{B_{n}}|v_{2}|^{p}dx\leq C\lambda_{n}^{-\frac{N}{p}(1-\frac{p}{p_{1}})}+C\lambda_{n}^{-p}\leq C\lambda_{n}^{-p}.
\]
This gives (\ref{eq: 4.17}). Now combining (\ref{eq: 4.15})-(\ref{eq: 4.17})
gives us
\begin{equation}
\lambda_{n}^{-N+\frac{N-p}{p}p_{n}(s)}\leq C\lambda_{n}^{-p}+C\lambda_{n}^{\frac{p-N}{p}}\leq C\lambda_{n}^{-p},\label{eq: 4.18}
\end{equation}
which is impossible since $N>p^{2}+p$ and $s<p$.

It remains to consider $B_{t_{n}\lambda_{n}^{-\frac{1}{p}}}(x_{n})\cap(\R^{N}\setminus\Omega)\neq\emptyset$.
In (\ref{eq: 4.4}), we take $x_{0}\in\R^{N}\setminus\Omega$ with
$|x_{0}-x_{n}|\leq2t_{n}\lambda_{n}^{-\frac{1}{p}}$ and $\nu\cdot(x-x_{0})\leq0$
in $\partial\Omega\cap B_{n}.$ With this choice of $x_{0},$ we get
from (\ref{eq: 4.4}),
\[
\begin{aligned} & \frac{s}{p_{n}(s)}\int_{B_{n}}\frac{|u_{n}|^{p_{n}(s)}}{|x|^{2+s}}(x_{n}\cdot x)dx\vspace{4mm}\\
\le & -\int_{B_{n}}\Big[a(x)-\frac{1}{p}\nabla a(x)\cdot(x-x_{n})\Big]|u_{n}|^{p}dx\\
 & +\frac{N-p}{p}{\displaystyle \int_{\pa_{i}B_{n}}|\nabla u_{n}|^{p-2}\frac{\partial u_{n}}{\partial\nu}u_{n}d\sigma}+\int_{\pa_{i}B_{n}}|\nabla u_{n}|^{p-2}\nabla u_{n}\cdot(x-x_{n})\frac{\partial u_{n}}{\partial\nu}d\sigma\vspace{4mm}\\
 & \,\,\,-\frac{1}{p}\int_{\pa_{i}B_{n}}|\nabla u_{n}|^{p}(x-x_{n})\cdot\nu d\sigma\vspace{4mm}\\
 & \,\,\,+\int_{\pa_{i}B_{n}}(x-x_{n})\cdot\nu\Big[\frac{1}{p_{n}}|u_{n}|^{p_{n}}+\frac{1}{p_{n}(s)}\frac{|u_{n}|^{p_{n}(s)}}{|x|^{s}}+\frac{1}{p}a(x)|u_{n}|^{p}\Big]d\sigma.
\end{aligned}
\]
Arguing as before, we find that (\ref{eq: 4.18}) still holds. Thus
we get a contradiction. We complete the proof of Theorem \ref{thm: main result}.\end{proof}

Now we can prove Theorem \ref{thm: infinitely many solutions}.

\begin{proof}[Proof of Theorem \ref{thm: infinitely many solutions}]
With Theorem \ref{thm: main result} at hand, we can prove Theorem
\ref{thm: infinitely many solutions} by the same method as that of
\cite{CaoPengYan2012}. So we omit the details.  \end{proof}

\appendix

\section{Estimates for quasilinear problems with Hardy potential}

In this section, we deduce some elementary estimates for solutions
of a quasilinear elliptic problem involving a Hardy potential. Let
$D$ be a bounded domain in $\R^{N}$ and $0\in D.$ For any $0\le t<p$,
write $d\mu_{t}=|x|^{-t}dx$ and $\|w\|_{q,\mu_{t}}^{q}=\int_{D}|w|^{q}d\mu_{t}.$
We also use the notation $\|w\|_{q}=\|w\|_{q,\mu_{0}}.$ Let us recall
that
\[
\|w\|_{*,q}=\|w\|_{q}+||w\|_{\frac{(N-s)}{N}q,\mu_{s}}.
\]

\begin{prop}
\label{prop: A.1} For any $f_{_{i}}\geq0$ and $f_{i}\in L^{\infty}({D}),i=1,2,$
let $w\in W_{0}^{1,p}(D)$ be the solution of
\[
\begin{cases}
-\Delta_{p}w=f_{1}(x)+\frac{f_{2}(x)}{|x|^{s}}, & x\in D,\\
w=0, & \text{on }\pa D.
\end{cases}
\]
Then, for any $1<q<N/p$, there is $C=C(N,p,s,q)>0$ such that
\[
\|w\|_{\ast,\frac{(p-1)Nq}{N-pq}}\le C\left(\|f_{1}\|_{q}+\|f_{2}\|_{\frac{(N-s)q}{N-sq},\mu_{s}}\right){}^{\frac{1}{p-1}}.
\]
\end{prop}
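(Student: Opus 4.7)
The plan is to run a single Moser-type test function argument, coupling the Sobolev and Caffarelli-Kohn-Nirenberg inequalities so that both components of $\|w\|_{*,r}$ with $r=(p-1)Nq/(N-pq)$ are produced simultaneously. Since $f_1,f_2\ge 0$, comparison gives $w\ge 0$ in $D$. Guided by scaling, I would choose the exponent
\[
1+\be=\frac{(p-1)N(q-1)}{N-pq},\qquad p+\be=\frac{(N-p)r}{N},
\]
so that $1+\be>0$ throughout $q\in(1,N/p)$, and verify by a direct computation the matching identities $(1+\be)q'=r$ and $(1+\be)\bigl[(N-s)q/(N-sq)\bigr]'=(N-s)r/N$.

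The next step is to test the weak form of the equation against $(w+\ep)^{1+\be}-\ep^{1+\be}\in W_0^{1,p}(D)$ and pass $\ep\to 0^+$ by monotone/dominated convergence (justified by the a priori bound $w\in L^\infty(D)$, which is available because $f_1+f_2/|x|^s\in L^q(D)$ for some $q>N/p$ thanks to $s<p$), obtaining
\[
(1+\be)\int_D|\na w|^pw^\be\,dx=\int_D\Bigl(f_1+\frac{f_2}{|x|^s}\Bigr)w^{1+\be}\,dx.
\]
The left-hand side equals $(1+\be)\bigl(p/(p+\be)\bigr)^p\int_D|\na w^{(p+\be)/p}|^p\,dx$; applying the Sobolev inequality and the Caffarelli-Kohn-Nirenberg inequality to $w^{(p+\be)/p}\in W_0^{1,p}(D)$ (whose target exponents are precisely $r$ and $(N-s)r/N$) bounds it below by $C^{-1}\bigl(\|w\|_r^{p+\be}+\|w\|_{(N-s)r/N,\mu_s}^{p+\be}\bigr)$. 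On the right, Hölder's inequality with the matched identities yields the upper bound $\bigl(\|f_1\|_q+\|f_2\|_{(N-s)q/(N-sq),\mu_s}\bigr)\|w\|_{*,r}^{1+\be}$. Combining these and cancelling the common factor $\|w\|_{*,r}^{1+\be}$ produces
\[
\|w\|_{*,r}^{p-1}\le C\bigl(\|f_1\|_q+\|f_2\|_{(N-s)q/(N-sq),\mu_s}\bigr),
\]
which is the desired bound after taking the $(p-1)$-th root.

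The main obstacle is the subcritical regime $1<q<(p^{*})'$, where $\be<0$ and the integrand $|\na w|^pw^\be$ is potentially singular on $\{w=0\}$. The $(w+\ep)^{1+\be}$ regularization, together with the $L^\infty$ bound on $w$ (so the right-hand side of the tested identity stays finite) and the fact that $|\na w|$ is bounded near $\pa D$ with $w$ vanishing linearly in $\dist(\cdot,\pa D)$ (ensuring integrability for every $\be>-1$), is what makes the limiting procedure rigorous uniformly across $q\in(1,N/p)$; a secondary technicality is verifying the chain-rule identity $|\na w|^pw^\be=(p/(p+\be))^p|\na w^{(p+\be)/p}|^p$ at the level of truncations before the limit.
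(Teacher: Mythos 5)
Your argument is correct and is essentially the paper's own proof: the same regularized test function $(w+\ep)^{1+\be}-\ep^{1+\be}$ (the paper writes the exponent as $1+p(r-1)$ with $p^{*}r=(p-1)Nq/(N-pq)$, treats $r\ge1$ without regularization and $1/p'<r<1$ with it), the same Sobolev plus Caffarelli--Kohn--Nirenberg lower bound on $\int_{D}|\na w^{(p+\be)/p}|^{p}dx$, and the same H\"older matching of exponents on the right-hand side. The only point to drop is your appeal to $|\na w|$ being bounded near $\pa D$ with $w$ vanishing linearly --- this is unjustified for a general bounded domain $D$ and is also unnecessary, since monotone convergence of $\int_{D}|\na w|^{p}(w+\ep)^{\be}dx$ together with the finiteness of the limit of the tested right-hand side (using $w\in L^{\infty}(D)$) already makes the passage $\ep\to0$ rigorous.
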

\begin{proof}
By the maximum principle, we find that $w\geq0.$ We claim that if
$\ensuremath{r>1/p',}$ then
\begin{equation}
\lv w\rv_{\ast,p^{*}r}^{pr}\le C\int_{D}\left(f_{1}+\frac{f_{2}}{|x|^{s}}\right)w^{1+p(r-1)}dx,\label{eq: a.1}
\end{equation}
for some $C=C(r)>0$.

First we suppose that $r\ge1$. Since $f_{1},f_{2}$ are bounded functions,
it is standard to prove that $w\in L^{\infty}(D)$ by Moser's iteration
method \cite{Moser1960}. Then we can take a test function $\xi=w^{1+p(r-1)}$
so that
\begin{equation}
\frac{1+p(r-1)}{r^{p}}\int_{D}|\na w^{r}|^{p}dx=\int_{D}\left(f_{1}+\frac{f_{2}}{|x|^{s}}\right)w^{1+p(r-1)}dx.\label{eq: a.2}
\end{equation}
Applying Sobolev inequality and Caffarelli-Kohn-Nirenberg inequality
give us
\[
\lv w\rv_{p^{*}r}^{pr}+\lv w\rv_{p^{*}(s)r}^{pr}\le C\int_{D}|\na w^{r}|^{p}dx
\]
for some $C=C(N,p,s)>0$. Thus
\begin{equation}
\lv w\rv_{\ast,p^{*}r}^{pr}\le C\int_{D}|\na w^{r}|^{p}dx.\label{eq: a.3}
\end{equation}
Therefore, combining (\ref{eq: a.2}) and (\ref{eq: a.3}) yields
(\ref{eq: a.1}).

Now let $r\in(1/p^{\prime},1)$ and $\ep>0$. Define $\xi=w(w+\ep)^{p(r-1)}$.
It is easy to verify that $\xi\in W_{0}^{1,p}(D)$ and
\[
\na\xi=(w+\epsilon)^{p(r-1)}\na w+p(r-1)w(w+\epsilon)^{p(r-1)-1}\na w.
\]
Take $\xi$ as a test function. We have
\[
\int_{D}|\na w|^{p-2}\na w\cdot\na\xi dx=\int_{D}\left(f_{1}+\frac{f_{2}}{|x|^{s}}\right)\xi dx.
\]
A simple calculation gives that
\begin{eqnarray*}
\int_{D}|\na w|^{p-2}\na w\cdot\na\xi dx & \ge & (1+p(r-1))\int_{D}(w+\ep)^{p(r-1)}|\na w|^{p}\\
 & = & \frac{1+p(r-1)}{r^{p}}\int_{D}|\na\left((w+\ep)^{r}-\ep^{r}\right)|dx\\
 & \ge & C(r)\left(\lv\left((w+\ep)^{r}-\ep^{r}\right)\rv_{p^{*}}^{p}+\lv\left((w+\ep)^{r}-\ep^{r}\right)\rv_{p^{*}(s),\mu_{s}}^{p}\right),
\end{eqnarray*}
for $C=C(N,p,s,r)>0$.

Let $w_{\ep}=\left((w+\ep)^{r}-\ep^{r}\right)^{1/r}$. Then there
exists $C>0$ such that
\[
\int_{D}|\na w|^{p-2}\na w\cdot\na\xi dx\ge C(r)\lv w_{\ep}\rv_{\ast,p^{*}r}^{pr}.
\]
Thus
\[
\lv w_{\ep}\rv_{\ast,p^{*}r}^{pr}\le C\int_{D}\left(f_{1}+\frac{f_{2}}{|x|^{s}}\right)w(w+\ep)^{p(r-1)}dx.
\]
Letting $\ep\to0$, we obtain (\ref{eq: a.1}) in the case when $r\in(1/p^{\prime},1)$.

To prove Proposition \ref{prop: A.1}, we apply H\"older's inequality
to (\ref{eq: a.1}) and obtain that
\[
\begin{aligned}\lv w\rv_{\ast,p^{*}r}^{pr} & \le C\left(\lv f_{1}\rv_{\frac{p^{*}r}{p^{*}r-(1+p(r-1))}}+\lv f_{2}\rv_{\frac{p^{*}(s)r}{p^{*}(s)r-(1+p(r-1))},\mu_{s}}\right)\left(\lv w\rv_{p^{*}r}^{1+p(r-1)}+\lv w\rv_{p^{*}(s)r,\mu_{s}}^{1+p(r-1)}\right)\\
 & \le C\left(\lv f_{1}\rv_{\frac{p^{*}r}{p^{*}r-(1+p(r-1))}}+\lv f_{2}\rv_{\frac{p^{*}(s)r}{p^{*}(s)r-(1+p(r-1))},\mu_{s}}\right)\lv w\rv_{\ast,p^{*}r}^{1+p(r-1)},
\end{aligned}
\]
which implies that
\[
\lv w\rv_{\ast,p^{*}r}\le C\left(\lv f_{1}\rv_{\frac{p^{*}r}{p^{*}r-(1+p(r-1))}}+\lv f_{2}\rv_{\frac{p^{*}(s)r}{p^{*}(s)r-(1+p(r-1))},\mu_{s}}\right)^{\frac{1}{p-1}}.
\]

Give $q\in(1,N/p)$. Let $r\in(1/p^{\prime},\wq)$ be such that $q=\frac{p^{*}r}{p^{*}r-(1+p(r-1))}$.
Then a simple calculation gives us
\begin{eqnarray*}
\frac{p^{*}(s)r}{p^{*}(s)r-(1+p(r-1))}=\frac{(N-s)q}{N-sq} & \text{ and } & p^{*}r=\frac{(p-1)Nq}{N-pq}.
\end{eqnarray*}
 We finish the proof.
\end{proof}
As a consequence of Proposition \ref{prop: A.1} we have the following
corollary.
\begin{cor}
\label{cor: A.2} Let $w\in W_{0}^{1,p}(D)$ be the solution of
\[
\begin{cases}
-\Delta_{p}w=\left(a_{1}(x)+\frac{a_{2}(x)}{|x|^{s}}\right)v^{p-1}, & x\in D,\\
w=0, & \text{on }\pa D,
\end{cases}
\]
where $a_{1},a_{2},v\in L^{\wq}(D)$ are nonnegative functions. Then
for any $\wq>q>p^{*}/p^{\prime}$, there holds
\[
\|w\|_{*,q}\le C\left(\|a_{1}\|_{\frac{N}{p}}+\|a_{2}\|_{\frac{N-s}{p-s},\mu_{s}}\right)^{\frac{1}{p-1}}\|v\|_{*,q}
\]
for $C=C(N,p,s,q)>0$.\end{cor}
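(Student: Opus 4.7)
The plan is to deduce Corollary \ref{cor: A.2} from Proposition \ref{prop: A.1} by choosing $f_{1}=a_{1}v^{p-1}$ and $f_{2}=a_{2}v^{p-1}$, selecting the inner exponent in Proposition \ref{prop: A.1} so that its output exponent equals the prescribed $q$, and then distributing each product norm by H\"older's inequality in order to separate $a_{i}$ from $v^{p-1}$.

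Concretely, given $q\in(p^{*}/p',\infty)$, I would set
\[
\tilde q \;=\; \frac{qN}{(p-1)N+qp},
\]
so that a direct computation yields $(p-1)N\tilde q/(N-p\tilde q)=q$. One checks that the hypothesis $q>p^{*}/p'=(p-1)N/(N-p)$ is equivalent to $\tilde q>1$, while $\tilde q<N/p$ is automatic. Applying Proposition \ref{prop: A.1} with the pair $(f_{1},f_{2})=(a_{1}v^{p-1},\,a_{2}v^{p-1})$ at exponent $\tilde q$ then gives
\[
\|w\|_{*,q} \;\leq\; C \Bigl( \|a_{1}v^{p-1}\|_{\tilde q} + \|a_{2}v^{p-1}\|_{\frac{(N-s)\tilde q}{N-s\tilde q},\mu_{s}} \Bigr)^{1/(p-1)}.
\]

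Next I would split each product by H\"older's inequality. For the first term, I want the $v$-factor to produce $\|v\|_{q}^{p-1}$; the conjugate relation $1/\tilde q=p/N+(p-1)/q$ (which follows by inserting the explicit $\tilde q$ above) forces the $a_{1}$-exponent to be exactly $N/p$. For the second term, applied in the weighted measure $d\mu_{s}$, I want the $v$-factor to produce $\|v\|_{(N-s)q/N,\mu_{s}}^{p-1}$; the corresponding conjugacy computation, namely $(N-s\tilde q)/((N-s)\tilde q)=(p-s)/(N-s)+N(p-1)/((N-s)q)$, pins the $a_{2}$-exponent down to $(N-s)/(p-s)$. Bounding $\|v\|_{q}$ and $\|v\|_{(N-s)q/N,\mu_{s}}$ each by $\|v\|_{*,q}$ and extracting the $(p-1)$-th root (using homogeneity of the right-hand side in $v$) delivers the claimed inequality.

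The only real obstacle is the arithmetic bookkeeping to confirm that with the chosen $\tilde q$, the two H\"older splits produce exactly the norms $\|a_{1}\|_{N/p}$ and $\|a_{2}\|_{(N-s)/(p-s),\mu_{s}}$. This is routine but must be handled carefully, because the weighted measure $d\mu_{s}$ modifies the conjugacy relation in the second factor; the identities displayed above are the precise checks one needs, and all of them reduce to simple algebraic manipulations of the single parameter $\tilde q$.
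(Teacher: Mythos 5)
Your proposal is correct and is essentially identical to the paper's own proof: the paper sets $r=Nq/(N(p-1)+pq)$ (your $\tilde q$), applies Proposition \ref{prop: A.1} with $f_i=a_iv^{p-1}$, and performs the same two H\"older splits to extract $\|a_1\|_{N/p}$ and $\|a_2\|_{(N-s)/(p-s),\mu_s}$ against $\|v\|_{\ast,q}^{p-1}$. The conjugacy identities you display are exactly the checks needed, and they all verify.
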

\begin{proof}
Let $\wq>q>p^{*}/p^{\prime}$ and define $r=Nq/(N(p-1)+pq)$. Then
$1<r<N/p$ and $q=(p-1)Nr/(N-pr)$.

By applying Proposition \ref{prop: A.1} with $f_{i}=a_{i}v^{p-1}$,
$i=1,2$, we obtain that
\[
\|w\|_{*,q}\le C\left(\|f_{1}\|_{r}+||f_{2}\|_{\frac{(N-s)r}{N-sr},\mu_{s}}\right){}^{\frac{1}{p-1}},
\]
for $C=C(N,p,s,q)>0$. By H\"older's inequality and the definition
of $\|\cdot\|_{\ast,q}$, we have that
\[
\|f_{1}\|_{r}\le\|a_{1}\|_{\frac{N}{p}}\|v\|_{q}^{p-1}\le\|a_{1}\|_{\frac{N}{p}}\|v\|_{\ast,q}^{p-1}
\]
and that
\[
||f_{2}||_{\frac{(N-s)r}{N-sr},\mu_{s}}\le\|a_{2}\|_{\frac{N-s}{p-s},\mu_{s}}\|v\|_{\frac{(N-s)q}{N},\mu_{s}}^{p-1}\le\|a_{2}\|_{\frac{N-s}{p-s},\mu_{s}}\|v\|_{\ast,q}^{p-1}.
\]
Combining the above inequalities gives Corollary \ref{cor: A.2}.
\end{proof}
We also have the following corollary.
\begin{cor}
\label{cor: A.3} Let $w\in W_{0}^{1,p}(D)$ be the solution of
\[
\begin{cases}
-\Delta_{p}w=\left(a_{1}(x)+\frac{a_{2}(x)}{|x|^{s}}\right)v^{p-1}, & x\in D,\\
w=0, & \text{on }\pa D,
\end{cases}
\]
where $a_{1},a_{2},v\in L^{\wq}(D)$ are nonnegative functions. Then
for any $p_{2}\in(p^{*}/p^{\prime},p^{*})$, there is a constant $C=C(N,p,s,p_{2})>0$
such that
\[
\|w\|_{*,p_{2}}\le C\left(\|a_{1}\|_{r_{1}}+\|a_{2}\|_{r_{2},\mu_{s}}\right)^{\frac{1}{p-1}}\|v\|_{*,p^{*}},
\]
where $r_{1},r_{2}$ are defined by
\begin{eqnarray}
\frac{1}{r_{1}}=(p-1)\left(\frac{1}{p_{2}}-\frac{1}{p^{*}}\right)+\frac{p}{N}, & \text{ and } & \frac{1}{r_{2}}=(p-1)\left(\frac{N}{(N-s)p_{2}}-\frac{1}{p^{*}(s)}\right)+\frac{p-s}{N-s}.\label{eq: A.3.1}
\end{eqnarray}
\end{cor}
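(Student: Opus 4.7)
The plan is to reduce Corollary~\ref{cor: A.3} to Proposition~\ref{prop: A.1} combined with a two-step H\"older application, exactly as was done for Corollary~\ref{cor: A.2}, but with a sharper choice of exponents that lets us control $v$ only in the (stronger) norm $\|\cdot\|_{\ast,p^\ast}$ rather than in $\|\cdot\|_{\ast,p_2}$.

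First, given $p_2\in(p^\ast/p',p^\ast)$, I would set
\[
q=\frac{Np_2}{N(p-1)+pp_2},
\]
so that $1/q=(p-1)/p_2+p/N$ and one checks $q\in(1,N/p)$ (the left inequality uses $p_2>p^\ast/p'$, the right uses $p_2<\infty$). With this $q$, the identity $(p-1)Nq/(N-pq)=p_2$ holds, hence Proposition~\ref{prop: A.1} applied to the equation for $w$ with $f_1=a_1 v^{p-1}$ and $f_2=a_2 v^{p-1}$ yields
\[
\|w\|_{\ast,p_2}\le C\Bigl(\|a_1 v^{p-1}\|_{q}+\|a_2 v^{p-1}\|_{\frac{(N-s)q}{N-sq},\mu_s}\Bigr)^{\frac{1}{p-1}}.
\]

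Next, I would control the two norms on the right by H\"older. For the first, I pair $a_1$ against $v^{p-1}\in L^{p^\ast/(p-1)}$: choosing $r_1$ by $1/r_1+(p-1)/p^\ast=1/q$ gives exactly the formula stated for $r_1$, and produces $\|a_1 v^{p-1}\|_q\le\|a_1\|_{r_1}\|v\|_{p^\ast}^{p-1}$. For the second, I write $q^\ast:=(N-s)q/(N-sq)$ and pair $a_2$ (in the measure $d\mu_s$) against $v^{p-1}\in L^{p^\ast(s)/(p-1)}(d\mu_s)$: the choice $1/r_2+(p-1)/p^\ast(s)=1/q^\ast$ gives exactly the stated formula for $r_2$, and yields $\|a_2 v^{p-1}\|_{q^\ast,\mu_s}\le\|a_2\|_{r_2,\mu_s}\|v\|_{p^\ast(s),\mu_s}^{p-1}$.

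Finally, since $(N-s)p^\ast/N=p^\ast(s)$, one has
\[
\|v\|_{p^\ast}+\|v\|_{p^\ast(s),\mu_s}=\|v\|_{\ast,p^\ast},
\]
so substituting the two H\"older estimates into the bound from Proposition~\ref{prop: A.1} gives the desired inequality. There is no real obstacle: the whole argument is a bookkeeping exercise, and the only place where care is needed is verifying that the exponents $r_1,r_2$ defined in (\ref{eq: A.3.1}) are precisely the Lebesgue conjugates dictated by forcing $v$ into $L^{p^\ast}$ and $L^{p^\ast(s)}(d\mu_s)$ respectively. The arithmetic $1/q-(p-1)/p^\ast=(p-1)(1/p_2-1/p^\ast)+p/N$ and its weighted analogue $1/q^\ast-(p-1)/p^\ast(s)=(p-1)(N/((N-s)p_2)-1/p^\ast(s))+(p-s)/(N-s)$ are immediate from $1/q=(p-1)/p_2+p/N$, so the two formulas in (\ref{eq: A.3.1}) drop out without further effort.
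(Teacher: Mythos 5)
Your proposal is correct and follows exactly the paper's own route: apply Proposition \ref{prop: A.1} with $f_i=a_i v^{p-1}$ at the exponent $q=Np_2/(N(p-1)+pp_2)$ (whose weighted companion is $(N-s)p_2/(N(p-1)+(p-s)p_2)$), then use H\"older to split off $v^{p-1}$ in $L^{p^*/(p-1)}$ and $L^{p^*(s)/(p-1)}(d\mu_s)$, which is precisely what forces the exponents $r_1,r_2$ of (\ref{eq: A.3.1}). The exponent arithmetic and the verification $q\in(1,N/p)$ from $p_2>p^*/p'$ all check out.
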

\begin{proof}
The proof is similar to that of Corollary \ref{cor: A.2}. By applying
Proposition \ref{prop: A.1} with $f_{i}=a_{i}v^{p-1}$, $i=1,2$,
we obtain
\[
\|w\|_{*,p_{2}}\le C\left(\|f_{1}\|_{\frac{Np_{2}}{(p-1)N+p_{2}p}}+\|f_{2}\|_{\frac{(N-s)p_{2}}{(p-1)N+(p-s)p_{2}},\mu_{s}}\right){}^{\frac{1}{p-1}}.
\]
Define $r_{1},r_{2}$ by (\ref{eq: A.3.1}). Applying H\"older's inequality
gives us that
\[
\|f_{1}\|_{\frac{Np_{2}}{(p-1)N+p_{2}p}}\le\|a_{1}\|_{r_{1}}\|v\|_{p^{*}}^{^{p-1}},
\]
 and that
\[
\|f_{2}\|_{\frac{(N-s)p_{2}}{(p-1)N+(p-s)p_{2}},\mu_{s}}\le\|a_{2}\|_{r_{2},\mu_{s}}\|v\|_{p^{*}(s),\mu_{s}}^{^{p-1}}.
\]
Combining the above inequalities gives Corollary \ref{cor: A.3}.
\end{proof}
We will need the following lemma in Section 3.
\begin{lem}
\label{lem: A.4}Let $w\in W_{\text{loc }}^{1,p}(\R^{N})$, $w\ge0$
be a weak solution of the equation
\[
-\De_{p}w\le\left(a_{1}(x)+\frac{a_{2}(x)}{|x|^{s}}\right)w^{p-1}
\]
in $\R^{N}$, where $a_{1},a_{2}\in L_{\text{loc}}^{\infty}(\R^{N})$
are nonnegative functions. Then for any unit ball $B_{1}(y)\subset\R^{N}$
and for any $q>p^{*}$, there is a small constant $\de=\de(q)>0$
such that if
\[
\left(\int_{B_{1}(y)}a_{1}^{\frac{N}{p}}dx\right)^{\frac{p}{N}}+\left(\int_{B_{1}(y)}a_{2}^{\frac{N-s}{p-s}}d\mu_{s}\right)^{\frac{p-s}{N-s}}<\de,
\]
then for any $\ga\in(0,p^{*})$, there has
\[
||w||_{q,B_{1/2}(y)}\le C||w||_{\ga,B_{1}(y)}
\]
for some $C=C(N,p,s,q,\ga)>0$. \end{lem}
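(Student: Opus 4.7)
The plan is to apply a Moser iteration tailored to the Hardy-weighted setting, very much in the spirit of the proof of Proposition \ref{prop: A.1}. Fix $r > 1/p'$ and a cutoff $\eta \in C_c^\infty(B_R(y))$ with $\eta \equiv 1$ on $B_{R'}(y)$ and $|\nabla \eta| \le 2/(R-R')$, for $1/2 \le R' < R \le 1$. First, I would test the inequality $-\Delta_p w \le (a_1 + a_2|x|^{-s}) w^{p-1}$ against $\xi = \eta^p w^{1+p(r-1)}$ (or, for $r<1$, against $\eta^p w (w+\epsilon)^{p(r-1)}$ followed by $\epsilon \to 0$, exactly as in Proposition \ref{prop: A.1}). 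Expanding $\nabla \xi$ and absorbing the cross terms produced by $\nabla \eta$ via Young's inequality yields a Caccioppoli-type bound
\begin{equation*}
\int \eta^p |\nabla w^r|^p\, dx \le C(r) \int |\nabla \eta|^p w^{pr}\, dx + C(r) \int \Bigl(a_1 + \frac{a_2}{|x|^s}\Bigr)\eta^p w^{pr}\, dx.
\end{equation*}

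The next step is to control the right-hand side by applying the Sobolev and Caffarelli--Kohn--Nirenberg inequalities to $\eta w^r$, giving $\|\eta w^r\|_{p^*}^p + \|\eta w^r\|_{p^*(s),\mu_s}^p \le C \int |\nabla(\eta w^r)|^p\, dx$, and then bounding the two $a_i$-integrals by H\"older's inequality as $\|a_1\|_{N/p, B_1(y)} \|\eta w^r\|_{p^*}^p$ and $\|a_2\|_{(N-s)/(p-s),\mu_s, B_1(y)} \|\eta w^r\|_{p^*(s),\mu_s}^p$. Taking $\delta$ small enough so that $C(r)\delta$ times the Sobolev/CKN constant is less than $1/2$, these contributions are absorbed into the left-hand side, leaving
\begin{equation*}
\|w\|_{L^{p^* r}(B_{R'}(y))} \le \Bigl(\frac{C(r)}{(R-R')^p}\Bigr)^{1/(pr)} \|w\|_{L^{pr}(B_R(y))}.
\end{equation*}

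The conclusion follows by a standard Moser iteration. For $\gamma > p-1$, I would set $pr_0 = \gamma$, $r_{k+1} = (p^*/p)\, r_k$, $R_k = 1/2 + 2^{-k-1}$, and verify that the product of the $k$-th step constants converges in log-scale (because $r_k$ grows geometrically while the logarithmic contribution is of order $k/(p^*/p)^k$), reaching any prescribed $q > p^*$ after finitely many steps. To handle $\gamma \in (0, p-1]$, I would first run the iteration from $L^p(B_1(y))$ up to $L^\infty(B_{3/4}(y))$, then combine the interpolation $\|w\|_{L^p(B_R)}^p \le \|w\|_{L^\infty(B_R)}^{p-\gamma}\|w\|_{L^\gamma(B_R)}^{\gamma}$ with a shrinking-radii iteration lemma of the type ``$f(R') \le A(R-R')^{-\alpha} f(R)^\theta + B$ with $\theta\in(0,1)$ implies $f$ is bounded,'' to absorb the $L^\infty$ factor and close the bound.

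The principal obstacle is the simultaneous treatment of the Hardy-weighted term $a_2/|x|^s$: it forces the parallel use of the $\mu_s$-weighted CKN inequality alongside the ordinary Sobolev inequality, and this is precisely what dictates the pair of norms $\|a_1\|_{N/p} + \|a_2\|_{(N-s)/(p-s),\mu_s}$ in the smallness hypothesis. A secondary difficulty is that the Caccioppoli constant $C(r)$ deteriorates as $r \to 1/p'$, so the smallness threshold $\delta$ must be chosen depending on the target exponent $q$, which explains the dependence $\delta = \delta(q)$ in the statement.
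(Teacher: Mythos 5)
Your main chain of estimates --- testing with $\eta^{p}w^{1+p(r-1)}$ (with the $\ep$-regularization for $r<1$), the Caccioppoli inequality, H\"older against $\|a_{1}\|_{N/p}$ and $\|a_{2}\|_{\frac{N-s}{p-s},\mu_{s}}$, absorption via the Sobolev and Caffarelli--Kohn--Nirenberg inequalities under the smallness hypothesis, and a finitely-iterated Moser step --- is exactly the paper's argument, and your iteration inequality is the paper's (\ref{eq: A.4.5}). The gap is in how you lower the starting exponent to a general $\ga\in(0,p^{*})$. First, starting the iteration at $pr_{0}=\ga$ makes $\de$ depend on $\ga$ as well as on $q$ (your own closing remark notes that $C(r)$ deteriorates as $r\to1/p'$), whereas the statement asks for $\de=\de(q)$. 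More seriously, for $\ga\le p-1$ your detour through a quantitative bound $\|w\|_{L^{\infty}(B_{3/4})}\le C\|w\|_{L^{p}(B_{1})}$ with $C=C(N,p,s,q,\ga)$ is not available: the hypotheses control $a_{1}$ only in $L^{N/p}$ and $a_{2}$ only in $L^{\frac{N-s}{p-s}}(\mu_{s})$, and at this critical integrability each Moser step $\tau$ needs its own smallness threshold $\de(\tau)$, with $\de(\tau)\to0$ as $\tau\to\infty$, so a single $\de$ cannot carry the iteration to $L^{\infty}$ (indeed $-\De u=Vu$ with $V\in L^{N/2}$ admits logarithmically unbounded solutions). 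The qualitative fact $w\in L_{\text{loc}}^{\infty}$ uses $a_{i}\in L_{\text{loc}}^{\infty}$ and comes with a constant that is not admissible here.

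The repair is the route the paper takes: run the finitely many iteration steps only from $L^{p^{*}}$ up to $L^{q}$ (so $\de=\de(q)$ through the number of steps $k$ with $p\chi^{k}\le q<p\chi^{k+1}$, $\chi=p^{*}/p$), obtaining $\|w\|_{q,B_{r}}\le C(R-r)^{-\si}\|w\|_{p^{*},B_{R}}$; then for any $\ga\in(0,p^{*})$ choose $\theta\in(0,1)$ with $1/p^{*}=\theta/\ga+(1-\theta)/q$, interpolate $\|w\|_{p^{*}}\le\|w\|_{\ga}^{\theta}\|w\|_{q}^{1-\theta}$, apply Young's inequality to reach $\|w\|_{q,B_{r}}\le\frac{1}{2}\|w\|_{q,B_{R}}+C(R-r)^{-\si/\theta}\|w\|_{\ga,B_{R}}$, and close with the standard shrinking-radii absorption lemma. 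You already have every ingredient for this; the interpolation simply has to be performed at the level of $L^{p^{*}}$ rather than through $L^{\infty}$.
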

\begin{proof}
It is standard to show that $w\in L_{\text{loc}}^{\infty}(\R^{N})$
by Moser's iteration method \cite{Moser1960}. Thus for any $\eta\in C_{0}^{\infty}(B_{1}(y))$,
we can take a test function by $\var=\eta^{p}w^{1+p(\tau-1)}$ for
any $\tau\ge1$. Write $B_{r}=B_{r}(y)$ for $r>0$ in the following
proof. Then we have
\begin{equation}
\int_{B_{1}}|\na w|^{p-2}\na w\cdot\na\var dx\le\int_{B_{1}}\left(a_{1}(x)+\frac{a_{2}(x)}{|x|^{s}}\right)\eta^{p}w^{p\tau}dx.\label{eq: A.4.1}
\end{equation}
Firstly, we have
\[
\int_{B_{1}}|\na w|^{p-2}\na w\cdot\na\var dx\ge\frac{C}{\tau^{p-1}}\int_{B_{1}}|\na(\eta w)|^{p}dx-C\int_{B_{1}}|\na\eta|^{p}w^{p\tau}dx.
\]
Secondly, we have
\[
\int_{B_{1}}a_{1}(x)\eta^{p}w^{p\tau}dx\le\left(\int_{B_{1}}a_{1}^{\frac{N}{p}}dx\right)^{\frac{p}{N}}\left(\int_{B_{1}}(\eta w^{\tau})^{p^{*}}dx\right)^{\frac{p}{p^{*}}},
\]
and
\[
\int_{B_{1}}a_{2}(x)\eta^{p}w^{p\tau}d\mu_{s}\le\left(\int_{B_{1}}a_{2}^{\frac{N-s}{p-s}}d\mu_{s}\right)^{\frac{p-s}{N-s}}\left(\int_{B_{1}}(\eta w^{\tau})^{p^{*}(s)}d\mu_{s}\right)^{\frac{p}{p^{*}(s)}}.
\]
Thus (\ref{eq: A.4.1}) implies that
\begin{equation}
\begin{aligned}\int_{B_{1}}|\na(\eta w)|^{p}dx & \le C\int_{B_{1}}|\na\eta|^{p}w^{p\tau}dx\\
 & \;+CA\left(\left(\int_{B_{1}}(\eta w^{\tau})^{p^{*}}dx\right)^{\frac{p}{p^{*}}}+\left(\int_{B_{1}}(\eta w^{\tau})^{p^{*}(s)}d\mu_{s}\right)^{\frac{p}{p^{*}(s)}}\right),
\end{aligned}
\label{eq: A.4.2}
\end{equation}
 where $C=C(\tau)>0$ and $A$ is given by
\[
A=\left(\int_{B_{1}}a_{1}^{\frac{N}{p}}dx\right)^{\frac{p}{N}}+\left(\int_{B_{1}}a_{2}^{\frac{N-s}{p-s}}d\mu_{s}\right)^{\frac{p-s}{N-s}}.
\]
By Sobolev inequality and Caffarelli-Kohn-Nirenberg inequality, we
obtain that
\begin{equation}
\left(\int_{B_{1}}(\eta w^{\tau})^{p^{*}}dx\right)^{\frac{p}{p^{*}}}+\left(\int_{B_{1}}(\eta w^{\tau})^{p^{*}(s)}d\mu_{s}\right)^{\frac{p}{p^{*}(s)}}\le C(N,p,s)\int_{B_{1}}|\na(\eta w)|^{p}dx.\label{eq: A.4.3}
\end{equation}
Combining (\ref{eq: A.4.2}) and (\ref{eq: A.4.3}) yields that
\[
\begin{aligned} & \left(\int_{B_{1}}(\eta w^{\tau})^{p^{*}}dx\right)^{\frac{p}{p^{*}}}+\left(\int_{B_{1}}(\eta w^{\tau})^{p^{*}(s)}d\mu_{s}\right)^{\frac{p}{p^{*}(s)}}\\
\le & \; C\int_{B_{1}}|\na\eta|^{p}w^{p\tau}dx+CA\left(\left(\int_{B_{1}}(\eta w^{\tau})^{p^{*}}dx\right)^{\frac{p}{p^{*}}}+\left(\int_{B_{1}}(\eta w^{\tau})^{p^{*}(s)}d\mu_{s}\right)^{\frac{p}{p^{*}(s)}}\right).
\end{aligned}
\]
Thus we can choose
\begin{equation}
\de=\de(\tau)>0\label{eq: delta}
\end{equation}
 small enough such that if $A<\de$, then $CA<1/2$ and
\[
\left(\int_{B_{1}}(\eta w^{\tau})^{p^{*}}dx\right)^{\frac{p}{p^{*}}}+\left(\int_{B_{1}}(\eta w^{\tau})^{p^{*}(s)}d\mu_{s}\right)^{\frac{p}{p^{*}(s)}}\le C\int_{B_{1}}|\na\eta|^{p}w^{p\tau}dx.
\]
In particular, if $A<\de,$ we have
\begin{equation}
\left(\int_{B_{1}}(\eta w^{\tau})^{p^{*}}dx\right)^{\frac{p}{p^{*}}}\le C(\tau)\int_{B_{1}}|\na\eta|^{p}w^{p\tau}dx.\label{eq: A.4.4}
\end{equation}

Let $0<r<R\le1$ and $\eta\in C_{0}^{\infty}(R_{R})$ be a cut-off
function such that $0\le\eta\le1$, $\eta\equiv1$ in $B_{r}$ and
$|\na\eta|\le2/(R-r)$. Substituting $\eta$ into (\ref{eq: A.4.4})
gives us that
\begin{equation}
\left(\int_{B_{r}}w^{p\chi\tau}dx\right)^{\frac{1}{\chi}}\le\frac{C(\tau)}{(R-r)^{p}}\int_{B_{R}}w^{p\tau}dx,\label{eq: A.4.5}
\end{equation}
where $\chi=p^{*}/p>1$.

Now for any fixed $q>p^{*}$, there exists $k\in\N$ such that $p\chi^{k}\le q<p\chi^{k+1}$.
Let $\tau_{i}=\chi^{i}$, $i=1,\ldots,k$ and let
\[
\de=\min\{\de(\tau_{i})\}_{i=1}^{k},
\]
where $\de(\tau_{i})$ is defined by (\ref{eq: delta}) with $\tau=\tau_{i}$.
Then if $A<\de$, we obtain from (\ref{eq: A.4.5}) that, for all
$\tau=\tau_{i}$, $i=1,\ldots,k$,
\[
\left(\int_{B_{r}}w^{p\chi^{i+1}}dx\right)^{\frac{1}{p\chi^{i+1}}}\le\frac{C(\tau_{i})}{(R-r)^{1/\chi^{i}}}\left(\int_{B_{R}}w^{p\chi^{i}}dx\right)^{\frac{1}{p\chi^{i}}}.
\]
Let $r_{i}=r+(R-r)/2^{i-1}$, $i\ge1$. Take $r=r_{i},R=r_{i-1}$
in the above formula and iterate for finitely many times. We obtain,
for any $0<r<R\le1$,
\[
\left(\int_{B_{r}}w^{p\chi^{k+1}}dx\right)^{\frac{1}{p\chi^{k+1}}}\le\frac{C}{(R-r)^{\si}}\left(\int_{B_{R}}w^{p^{*}}dx\right)^{\frac{1}{p^{*}}}
\]
for some constants $C>0$ and $\si>0$. In particular, we have
\begin{equation}
\left(\int_{B_{r}}w^{q}dx\right)^{\frac{1}{q}}\le\frac{C}{(R-r)^{\si}}\left(\int_{B_{R}}w^{p^{*}}dx\right)^{\frac{1}{p^{*}}}.\label{eq:A.4.6}
\end{equation}

Fix $\ga\in(0,p^{*})$. There exists $\theta\in(0,1)$ such that
\[
\frac{1}{p^{*}}=\frac{\theta}{\ga}+\frac{1-\theta}{q}.
\]
Thus by H\"older's inequality and Young's inequality, (\ref{eq:A.4.6})
implies that
\[
\left(\int_{B_{r}}w^{q}dx\right)^{\frac{1}{q}}\le\frac{1}{2}\left(\int_{B_{R}}w^{q}dx\right)^{\frac{1}{q}}+\frac{C}{(R-r)^{\si/\theta}}\left(\int_{B_{R}}w^{\ga}dx\right)^{\frac{1}{\ga}}.
\]
Now an iteration argument gives us that
\[
\left(\int_{B_{r}}w^{q}dx\right)^{\frac{1}{q}}\le\frac{C}{(R-r)^{\si^{\prime}}}\left(\int_{B_{R}}w^{\ga}dx\right)^{\frac{1}{\ga}}
\]
for some constants $C,\si^{\prime}>0$. Choose $r=1/2$ and $R=1$.
We complete the proof.
\end{proof}

\section{A decay estimate}

We use $\R_{*}^{N}$ to denote either $\R^{N}$ or $\R_{+}^{N}.$
Consider the following equation
\begin{equation}
\begin{cases}
-\Delta_{p}u=\mu|u|^{p^{*}-2}u+\frac{|u|^{p^{*}(s)-2}u}{|x|^{s}}, & \text{in}\;\R_{*}^{N},\\
u\in{\cal D}_{0}^{1,p}(\R_{*}^{N}),
\end{cases}\label{b.1}
\end{equation}
where ${\cal D}_{0}^{1,p}(\R_{*}^{N})$ is the completion of $C_{0}^{\wq}(\R_{\ast}^{N})$
in the norm $\|u\|_{{\cal D}_{0}^{1,p}(\R_{*}^{N})}=\|\na u\|_{p,\R_{\ast}^{N}}$.
In this section, we give an estimate for the decay of solutions to
equation \eqref{b.1} at the infinity.  We have the following result.
\begin{prop}
\label{prop: B.1} Let $u$ be a solution of \eqref{b.1}. Then there
exists a constant $C>0$ such that
\begin{eqnarray*}
|u(x)|\leq\frac{C}{1+|x|^{\frac{N-p}{p-1}}}, &  & \forall\, x\in\R_{\ast}^{N}.
\end{eqnarray*}

\end{prop}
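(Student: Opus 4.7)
My plan is in three steps: (i) show $u \in L^\infty(\R_*^N)$; (ii) show $|u(y)| \to 0$ as $|y| \to \infty$ and upgrade this to an initial quantitative polynomial rate; (iii) bootstrap via the Wolff potential estimates of Appendix C to the sharp rate $(N-p)/(p-1)$.

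For (i) and (ii), I rewrite the equation for $|u|$ as
\[
-\Delta_p |u| \le \Big(\mu|u|^{p^{*}-p} + \tfrac{|u|^{p^{*}(s)-p}}{|x|^s}\Big)|u|^{p-1}
\]
and apply Lemma~\ref{lem: A.4}. Since $u \in L^{p^{*}}(\R_*^N) \cap L^{p^{*}(s)}(\R_*^N, d\mu_s)$, the coefficients $a_1 = \mu|u|^{p^{*}-p}$ and $a_2 = |u|^{p^{*}(s)-p}$ have globally finite $L^{N/p}$ and $L^{(N-s)/(p-s)}(d\mu_s)$ norms; by absolute continuity, their restrictions to $B_1(y_0)$ have norms that vanish as $|y_0| \to \infty$. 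A covering plus Moser argument then yields the $L^\infty$ bound, while for $|y_0|$ large the conclusion of Lemma~\ref{lem: A.4} specializes to $|u(y_0)| \le C\|u\|_{p^{*},B_1(y_0)} \to 0$. A rescaling-plus-Moser argument on dyadic annuli, combined with the integrability of $|u|^{p^{*}}$, upgrades this to an initial polynomial rate $|u(y)| \le C(1+|y|)^{-\alpha_0}$ for some $\alpha_0 > 0$.

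For (iii), the Wolff potential estimate of Appendix C yields
\[
|u(x)| \le C\,\mathbf{W}^{1,p}_R[f](x) + C \inf_{B_R(x)} |u|,\qquad f := \mu|u|^{p^{*}-1} + \tfrac{|u|^{p^{*}(s)-1}}{|x|^s}.
\]
Assuming inductively $|u(y)| \le C(1+|y|)^{-\alpha}$, substituting into $f$, choosing $R$ suitably, and estimating the Wolff integral produces an improved exponent $\alpha' = \Phi(\alpha)$. Iterating drives $\alpha$ to the fixed point $(N-p)/(p-1)$, the decay rate of the fundamental solution of $-\Delta_p$. The algebraic identities
\[
(p^{*}-1)\cdot\tfrac{N-p}{p-1} = N + \tfrac{p}{p-1},\qquad (p^{*}(s)-1)\cdot\tfrac{N-p}{p-1} + s = N + \tfrac{p-s}{p-1},
\]
both exceeding $N$ since $s < p$, guarantee that at this exponent both $|u|^{p^{*}-1}$ and $|u|^{p^{*}(s)-1}\,d\mu_s$ are globally integrable on $\R_*^N$, which is exactly the condition that closes the bootstrap.

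The main obstacle will be executing step (iii) cleanly: extracting a workable initial quantitative rate $\alpha_0 > 0$ from step (ii), and verifying that the two-term structure of $f$ (Sobolev plus Hardy) does not obstruct convergence to the common fixed exponent $(N-p)/(p-1)$, i.e.\ that neither term produces a strictly slower rate. In the half-space case $\R_+^N$, the vanishing boundary trace on $\partial\R_+^N$ allows an odd reflection of $u$ across the hyperplane, producing a solution on $\R^N$ of the same form (as $-\Delta_p$ and both nonlinear terms are odd under $u \mapsto -u$ and symmetric under the spatial reflection), thereby reducing the problem to the whole-space case already handled.
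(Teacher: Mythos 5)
Your reflection step for the half-space case and the two algebraic identities are fine, and your steps (i)--(ii) up to the qualitative statement $|u(y)|\to0$ are essentially Lemma \ref{lem: A.4} plus absolute continuity of the integrals, as in the paper. The core of your argument, however --- the Wolff-potential bootstrap in step (iii) --- has a genuine gap, and the paper takes a different route precisely to avoid it. First, the starting rate: from $u\in L^{p^{*}}$ and Moser iteration on dyadic annuli one only gets $|u(y)|\le o(1)\,|y|^{-(N-p)/p}$, i.e.\ $\al_{0}=(N-p)/p$. The iteration map produced by the Wolff tail is $\al\mapsto((p^{*}-1)\al-p)/(p-1)$ (and the analogous map for the Hardy term), whose fixed point is exactly $(N-p)/p$ --- not $(N-p)/(p-1)$, which is merely where the iteration terminates once $f$ becomes integrable --- and this fixed point is repelling, so the bootstrap only moves if you start \emph{strictly} above $(N-p)/p$. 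Producing a rate $(N-p)/p+\si$ with $\si>0$ is precisely the content of Lemma \ref{lem: B.2}, a separate nontrivial estimate (proved as in \cite[Lemma B.3]{CaoPengYan2012}), and it does not follow from your step (ii) as described.

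Second, and more seriously, the pointwise upper bound you invoke carries the term $C\inf_{B_{R}(x)}|u|$ (note that Proposition \ref{prop: C.2} as actually stated is formulated on bounded domains and even carries an additive constant $C$, so it cannot be quoted for decay at infinity at all; you would need the genuine Kilpel\"ainen--Mal\'y estimate). Under the inductive hypothesis $|u(y)|\le C(1+|y|)^{-\al}$ the infimum term is only $\le C(1+|x|)^{-\al}$, and from the $L^{p^{*}}$ bound alone it is only $o(|x|^{-(N-p)/p})$; either way it reproduces, and never improves, the current rate. So each iteration gives $|u(x)|\le C|x|^{-\al}+C|x|^{-\al'}$ with $\al'>\al$ coming only from the Wolff tail, and the overall bound stalls at $\al_{0}$: the ``$p$-harmonic part'' of $u$ in the exterior domain is invisible to the Wolff potential. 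This is exactly why the paper, after Lemma \ref{lem: B.2}, switches to a comparison argument: it constructs the explicit supersolution $v(x)=|x|^{-(N-p)/(p-1)}(1+|x|^{-\ep})$, verifies $-\De_{p}v=gv^{p-1}$ with $g\ge f$ outside a large ball, and applies the exterior-domain comparison principle (Theorem \ref{thm: Comparison at infinity}, imported from \cite{CXY}) to get $|u|\le Cv$. To salvage your plan you would need both Lemma \ref{lem: B.2} as input and some substitute for that comparison step controlling the homogeneous part.
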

To prove Proposition \ref{prop: B.1}, the following preliminary estimate
is needed.
\begin{lem}
\label{lem: B.2} Let $u$ be a solution of \eqref{b.1}. Then there
is a constant $C>0$ such that
\begin{eqnarray*}
|u(x)|\leq\frac{C}{1+|x|^{\frac{N-p}{p}+\si}}, &  & \forall\,|x|\ge1,
\end{eqnarray*}
 for some $\si>0$.
\end{lem}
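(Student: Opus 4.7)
My plan has three stages. First, I would establish that $u \in L^{\infty}(\R_*^{N})$ and $|u(x)| \to 0$ as $|x| \to \infty$. Writing the equation as $-\Delta_p u \le V u^{p-1}$ with coefficient
\[
V(x) := \mu|u(x)|^{p^{*}-p} + \frac{|u(x)|^{p^{*}(s)-p}}{|x|^{s}},
\]
the integrability $u \in L^{p^{*}}(\R^N_*) \cap L^{p^{*}(s)}(d\mu_s)$ makes $\|V\|_{L^{N/p}(B_1(y))}$ and $\|V\|_{L^{(N-s)/(p-s)}(B_1(y),\mu_s)}$ arbitrarily small once $|y|$ is large. Lemma A.4 applied on each unit ball $B_1(y)$ then gives $\|u\|_{L^{\infty}(B_{1/2}(y))} \le C\|u\|_{L^{p^{*}}(B_1(y))}$, whose right-hand side vanishes as $|y| \to \infty$ by absolute continuity of $\int|u|^{p^{*}}\,dx$.

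Next, I would exploit the scaling invariance. Set $\alpha := (N-p)/p$ and $v_R(y) := R^{\alpha}u(Ry)$ for $R \ge 1$. A direct calculation shows that $v_R$ solves the same equation as $u$, both critical exponents being preserved since $\alpha(p^{*}-p) = p$ and $\alpha(p^{*}(s)-p) = p-s$. Because
\[
\int_{\{1/2 \le |y| \le 2\}} |v_R|^{p^{*}}\,dy = \int_{\{R/2 \le |x| \le 2R\}} |u|^{p^{*}}\,dx \longrightarrow 0 \quad \text{as } R \to \infty,
\]
the first step applied to $v_R$ gives $\|v_R\|_{L^{\infty}(\{3/4 \le |y| \le 3/2\})} \to 0$. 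Rescaling back yields $\sup_{|x|=R}|x|^{\alpha}|u(x)| \to 0$; in particular $V(x) \le \varepsilon(R)|x|^{-p}$ on $\{|x| \ge R\}$ with $\varepsilon(R) \to 0$, the exponent $-p$ arising precisely from $\alpha(p^{*}-p) = \alpha(p^{*}(s)-p) + s = p$.

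For the decisive third step I would compare $u$ with the barrier $\phi(x) := M|x|^{-\beta}$, where $\beta \in (\alpha,\,(N-p)/(p-1))$ lies strictly below the $p$-harmonic exponent. A radial computation gives
\[
-\Delta_p\phi(x) = M^{p-1}c_\beta|x|^{-p-\beta(p-1)}, \qquad c_\beta := \beta^{p-1}\bigl(N-1-(\beta+1)(p-1)\bigr) > 0,
\]
which has exactly the homogeneity of $V\phi^{p-1} \le \varepsilon(R)M^{p-1}|x|^{-p-\beta(p-1)}$. Choosing $R$ so large that $\varepsilon(R) < c_\beta$, and then $M$ large enough that $\phi \ge u$ on $\{|x|=R\}$ (possible by boundedness of $u$), the weak comparison principle for the $p$-Laplacian on the exterior domain $\{|x|>R\}$ delivers $u \le \phi$, establishing the lemma with $\sigma = \beta - \alpha > 0$.

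The principal obstacle is making the comparison rigorous, since the operator $w \mapsto -\Delta_p w - Vw^{p-1}$ is not a priori monotone in $w$. The leverage is the smallness $\varepsilon(R) \to 0$: testing $-\Delta_p u - Vu^{p-1} \le 0$ and $-\Delta_p\phi - V\phi^{p-1} \ge 0$ against $(u-\phi)^+ \in W_0^{1,p}(\{|x|>R\})$ and using the monotonicity of $z \mapsto |z|^{p-2}z$, the cross term $V(u^{p-1}-\phi^{p-1})(u-\phi)^+$ is absorbed into the coercive gradient contribution via the Hardy inequality on the exterior domain, which supplies the $|x|^{-p}$ weight matching the decay of $V$ while $\varepsilon(R)$ supplies the needed smallness. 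An alternative route, avoiding the delicate comparison, is a weighted Moser iteration in the spirit of Lemma A.4 with weight $|x|^{p\beta}$ iterated on dyadic annuli; it is longer but entirely mechanical.
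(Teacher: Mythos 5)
The paper itself does not prove Lemma \ref{lem: B.2}; it is quoted from \cite[Lemma B.3]{CaoPengYan2012} and \cite[Proposition 2.1]{CXY}, so your proposal is being measured against those arguments. Your first two stages are correct and reproduce the standard opening of that proof: the scaling identities $\alpha(p^{*}-p)=p$ and $\alpha(p^{*}(s)-p)=p-s$ with $\alpha=(N-p)/p$ are right, the coefficient norms of $v_R$ on the fixed annulus equal tails of the convergent integrals $\int|u|^{p^{*}}dx$ and $\int|u|^{p^{*}(s)}d\mu_{s}$, and the conclusion $\sup_{|x|=R}|x|^{\alpha}|u(x)|\to0$, hence $V(x)\le\varepsilon(R)|x|^{-p}$ for $|x|\ge R$, is exactly the input the cited proofs use. (Minor point: Lemma \ref{lem: A.4} only gives $L^{q}$ bounds for finite $q>p^{*}$; to reach $L^{\infty}$ you need one further routine step, e.g.\ local boundedness for $-\De_{p}u\le Vu^{p-1}$ once $V\in L^{t}_{\rm loc}$ for some $t>N/p$, which follows from $u\in L^{q}_{\rm loc}$ with $q$ large.)

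The gap is in stage 3, which is the actual content of the lemma. The barrier computation is fine ($c_{\beta}>0$ precisely because $\beta<(N-p)/(p-1)$), but the comparison you need is not available off the shelf: the paper's Theorem \ref{thm: Comparison at infinity} requires $g\in L^{N/p}$ of the exterior domain, and $g=c_{\beta}|x|^{-p}$ fails this ($\int_{|x|>R}|x|^{-N}dx$ diverges); that theorem only becomes usable \emph{after} Lemma \ref{lem: B.2} is known, which is how Proposition \ref{prop: B.1} is then proved with the barrier $|x|^{-\gamma}(1+|x|^{-\ep})$. Your proposed Hardy-inequality absorption is exactly borderline and does not close as stated. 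For $p>2$ one has $u^{p-1}-\phi^{p-1}\le(p-1)u^{p-2}(u-\phi)$ but not $\le(u-\phi)^{p-1}$, and the resulting term $\int Vu^{p-2}w^{2}$ (with $w=(u-\phi)^{+}$), estimated by H\"older against $\int|x|^{-p}w^{p}$, produces the weight $\int_{|x|>R}|x|^{-p(1+\alpha)}dx=\int_{|x|>R}|x|^{-N}dx=\infty$. For $1<p<2$ the coercivity of the monotone difference is only $|\nabla w|^{2}(|\nabla u|+|\nabla\phi|)^{p-2}$, and the resulting inequality has the form $X\le C\varepsilon^{p/2}X^{p/2}K$ with $X=\int|\nabla w|^{p}$, which bounds $X$ but does not force $X=0$. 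This borderline behaviour --- the potential decays exactly like the Hardy weight --- is the whole difficulty of the lemma. The route you relegate to a final remark, a weighted Moser iteration on dyadic annuli with weight $|x|^{p\beta}$ using the smallness $\varepsilon(R)$ to absorb the critical terms and thereby raise the decay exponent from $(N-p)/p$ to $(N-p)/p+\si$, is in fact the proof in \cite{CaoPengYan2012}; it is not merely a ``longer but mechanical'' alternative, it is the argument you should carry out.
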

Lemma \ref{lem: B.2} can be proved as that of \cite[Lemma B.3]{CaoPengYan2012}
or \cite[Proposition 2.1]{CXY}. So we omit the details. We also need
the following comparison principle which is a special case of \cite[Theorem 1.5]{CXY}.
\begin{thm}
\label{thm: Comparison at infinity} Let $\Omega$ be an exterior
domain such that $\Omega^{c}=\mathbb{R}^{N}\backslash\Omega$ is bounded
and $f\in L^{\frac{N}{p}}(\Omega)$. Let $u\in\mathcal{D}^{1,p}(\Omega)$
be a subsolution of equation
\begin{eqnarray}
-\De_{p}u=f|u|^{p-2}u &  & \hbox{in}\:\Omega,\label{eq: subsolution 2}
\end{eqnarray}
and $v\in\mathcal{D}^{1,p}(\Omega)$  a positive supersolution of
\begin{eqnarray}
-\De_{p}v=g|v|^{p-2}v &  & \hbox{in}\:\Omega,\label{eq: supersolution 2}
\end{eqnarray}
such that $\inf_{\pa\Om}v>0$, where functions $g$ belongs to $L^{\frac{N}{p}}(\Omega)$
and $f\leq g$ in $\Omega$. Moreover, assume that
\begin{equation}
\limsup_{R\to\infty}\frac{1}{R}\int_{B_{2R}\backslash B_{R}}u^{p}|\nabla\log v|^{p-1}=0.\label{eq: special condition on v}
\end{equation}
 If $u\leq v$ on $\partial\Omega$, then
\begin{eqnarray*}
u\leq v &  & \hbox{in}\:\Omega.
\end{eqnarray*}

\end{thm}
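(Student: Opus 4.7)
My plan is to combine a Picone-type (Allegretto--Huang) identity with a cut-off at infinity, designed so that the appearing boundary error is precisely the quantity controlled by hypothesis \eqref{eq: special condition on v}.

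Without loss of generality, I may assume $u \ge 0$, since $u_+$ is again a weak subsolution and the conclusion $u \leq v$ is trivial where $u \leq 0$. Let $\eta_R \in C_c^\infty(\R^N)$ be a standard cut-off equal to $1$ on $B_R$, vanishing outside $B_{2R}$, with $|\nabla \eta_R| \le C/R$. Because $u \le v$ on $\partial\Omega$ and $\inf_{\partial\Omega} v > 0$, the overshoot $W := (u^p - v^p)_+$ vanishes in a neighbourhood of $\partial\Omega$, so after a regularization by $\delta > 0$ the Picone-style test functions
\begin{equation*}
\varphi_1 = \eta_R^p\,\frac{W}{(u + \delta)^{p-1}}, \qquad \varphi_2 = \eta_R^p\,\frac{W}{v^{p-1}}
\end{equation*}
belong to $W_0^{1,p}(\Omega)$ with compact support. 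Plugging $\varphi_1$ into the subsolution inequality for $u$ and $\varphi_2$ into the supersolution inequality for $v$, then subtracting, the hypothesis $f \le g$ and $W \ge 0$ render the right-hand side non-positive, while the left-hand side is rewritten via the pointwise Picone identity $|\nabla v|^{p-2}\nabla v\cdot\nabla(u^p/v^{p-1}) \le |\nabla u|^p$ as $\int \eta_R^p\, Q(u,v)\,dx$ plus error terms involving $\nabla \eta_R$. Here $Q(u,v) \ge 0$ is the usual Picone quadratic form, which vanishes if and only if $u$ and $v$ are proportional pointwise.

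The $v$-side cut-off error equals $p\int \eta_R^{p-1}|\nabla v|^{p-1}|\nabla \eta_R|\,u^p/v^{p-1}\,dx \le \frac{C}{R}\int_{B_{2R}\setminus B_R} u^p |\nabla \log v|^{p-1}\,dx$, which is exactly the quantity forced to zero by \eqref{eq: special condition on v}. The $u$-side cut-off error has the form $\int \eta_R^{p-1}|\nabla u|^{p-1}|\nabla \eta_R|\,u\,dx$; H\"older's inequality combined with $u\in\mathcal{D}^{1,p}(\Omega) \hookrightarrow L^{p^*}(\Omega)$ bounds it by $\bigl(\int_{B_{2R}\setminus B_R}|\nabla u|^p\bigr)^{(p-1)/p}$ times a factor bounded uniformly in $R$ (via $(1/R)\|u\|_{L^p(B_{2R}\setminus B_R)} \le C\|u\|_{L^{p^*}}$), and the first factor tends to zero by absolute continuity of $\int_\Omega |\nabla u|^p$. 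Sending $\delta \to 0$ by monotone convergence and then $R\to\infty$ forces $Q(u,v)\equiv 0$ on $\{u > v\}$, hence $u = cv$ with $c\in(0,1]$ on each connected component of that set; combined with $u \le v$ on its boundary this gives $u \le v$ throughout $\Omega$.

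The main obstacle is the simultaneous passage to the limits $\delta \downarrow 0$ and $R\to\infty$: one must justify that the regularized test functions stay genuinely admissible (likely by pre-truncating $u$ at height $n$, carrying through the computation, and then letting $n\to\infty$), and one must verify that the Picone identity yields exactly the cut-off error structure matching hypothesis \eqref{eq: special condition on v}, so that the $v$-side error is absorbed by the hypothesis while the $u$-side error vanishes by the global integrability $|\nabla u|\in L^p(\Omega)$.
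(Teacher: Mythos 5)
The paper never proves this theorem: it is imported wholesale as ``a special case of \cite[Theorem 1.5]{CXY}'' (a separate submitted paper of the second author), so there is no in-paper argument to measure yours against. That said, your Picone-plus-cutoff strategy is clearly the intended mechanism: hypothesis \eqref{eq: special condition on v} is tailor-made for it, since $\frac{1}{R}\int_{B_{2R}\setminus B_R}u^{p}|\nabla\log v|^{p-1}$ is, up to a constant, exactly the $v$-side cutoff error $p\int\eta_R^{p-1}|\nabla\eta_R|\,|\nabla v|^{p-1}u^{p}v^{1-p}$ produced by testing the supersolution inequality with $\eta_R^{p}(u^{p}-v^{p})_+/v^{p-1}$, and your treatment of the $u$-side error (H\"older, the scaling identity $1/p-1/p^{*}=1/N$, and absolute continuity of $\int|\nabla u|^{p}$) is correct. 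Two points need tightening. First, $u\le v$ on $\partial\Omega$ only gives $(u^{p}-v^{p})_+\in W_0^{1,p}$ in the trace sense, not vanishing ``in a neighbourhood of $\partial\Omega$''; this is still enough for admissibility of the test functions once $u$ is truncated at height $n$, as you anticipate, but the claim as written is unjustified. Second, the endgame is garbled: on a component of $\{u>v\}$ the vanishing of the Picone form gives $u=cv$ with $c>1$, not $c\in(0,1]$, and the contradiction comes from the relative boundary of that component --- either it lies in $\Omega$, where $u=v$ forces $c=1$, or it meets $\partial\Omega$, where $u\le v$ together with $\inf_{\partial\Omega}v>0$ rules out $c>1$ --- so $\{u>v\}$ is empty. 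With those repairs the argument is sound and is almost certainly the proof given in the cited source.
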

Now we can prove Proposition \ref{prop: B.1}.

\begin{proof}[\textbf{Proof of Proposition \ref{prop: B.1}}] Let
$u$ be a weak solution to equation (\ref{b.1}). In case $\R_{\ast}^{N}=\R_{+}^{N}$,
we define an odd extension of $u$ by
\[
\tilde{u}(x)=\begin{cases}
u(x^{\prime},x_{N}) & \text{if }x_{N}\ge0;\\
-u(x^{\prime},-x_{N}) & \text{if }x_{N}<0
\end{cases}
\]
 for $x=(x^{\prime},x_{N})\in\R^{N}$. Then it is direct to verify
that $\tilde{u}\in{\cal D}^{1,p}(\R^{N})$ and $\tilde{u}$ is a solution
of equation (\ref{b.1}) in the whole space $\R^{N}$. Thus in the
rest of the proof we assume that $\R_{\ast}^{N}=\R^{N}$.

We use Theorem \ref{thm: Comparison at infinity} to prove Proposition
\ref{prop: B.1}. Let $\ep>0$ and denote $\ga=(N-p)/(p-1)$. Let
$v(x)=|x|^{-\ga}(1+|x|^{-\ep})$ for $x\ne0$. A simple calculation
gives that
\begin{eqnarray*}
-\De_{p}v=g(x)v^{p-1}, &  & \text{for }x\ne0,
\end{eqnarray*}
where
\[
g(x)=\frac{(p-1)(\ga+\ep)^{p}-(N-p)(\ga+\ep)^{p-1}}{(1+|x|^{-\ep})^{p-1}|x|^{p+(p-1)\ep}}.
\]
Since $(p-1)(\ga+\ep)^{p}-(N-p)(\ga+\ep)^{p-1}>0$ for any $\ep>0$,
it is easy to obtain that
\begin{eqnarray*}
g(x)\ge C|x|^{-p-(p-1)\ep}, &  & \text{for }|x|\ge1,
\end{eqnarray*}
for some constant $C>0$. Thus $g\in L^{\frac{N}{p}}(\R^{N}\backslash B_{1}(0))$
since $\ep>0$.

On the other hand, let $u$ be a solution to equation (\ref{b.1})
and denote
\[
f(x)=\mu|u|^{p^{*}-p}+\frac{|u|^{p^{*}(s)-p}}{|x|^{s}}.
\]
Lemma \ref{lem: B.2} implies that
\begin{eqnarray*}
f(x)\le C|x|^{-\al} &  & \text{for }|x|\ge1,
\end{eqnarray*}
where
\[
\al=\min\left\{ (p^{*}-p)\left(\frac{N-p}{p}+\si\right),s+(p^{*}(s)-p)\left(\frac{N-p}{p}+\si\right)\right\} =p+(p^{*}(s)-p)\si
\]
since $\si>0$, we have $\al>p$, and thus $f\in L^{\frac{N}{p}}(\R^{N}\backslash B_{1}(0))$.

Choose $\ep>0$ small such that $p+(p-1)\ep<\al$. Then we can find
a large number $R>1$ such that
\begin{eqnarray*}
g(x)\ge f(x), &  & \text{for }|x|\ge R.
\end{eqnarray*}
It is easy to verify that the condition (\ref{eq: special condition on v})
is satisfied. Therefore, applying Theorem \ref{thm: Comparison at infinity}
with $\Om=\R^{N}\backslash B_{R}(0)$ gives us
\begin{eqnarray*}
\pm u(x)\le Cv(x), &  & \text{for }|x|\ge R.
\end{eqnarray*}
That is,
\begin{eqnarray*}
|u(x)|\le C|x|^{-\frac{N-p}{p-1}}, &  & \text{for }|x|\ge R.
\end{eqnarray*}

So we obtain the decay rate for the solution $u$ at the infinity.
To prove Proposition \ref{prop: B.1}, one only needs to note that
$u\in L_{\text{loc}}^{\infty}(\R^{N})$, which can be done by Moser's
iteration method \cite{Moser1960}. This finishes the proof. \end{proof}

\section{Estimates for ${\displaystyle p}$-Laplacian equation}

In this section, we copy two results on $p$-Laplacian equation from
\cite{CaoPengYan2012} without proof. We assume that $D$ is a bounded
domain with $\Om\subset\subset D$.
\begin{prop}
\label{prop: C.1} (\cite[Lemma 2.2]{CaoPengYan2012})  For any functions
$f_{1}(x)\geq0$ and $f_{2}(x)\geq0,$ let $w\geq0$ be the solution
of
\[
\begin{cases}
-\De_{p}w=f_{1}+f_{2} & \text{in }D,\\
w=0 & \text{on }\pa D.
\end{cases}
\]
Also, let $w_{i},i=1,2,$ be the solution of
\[
\begin{cases}
-\De_{p}w=f_{i} & \text{in }D,\\
w=0 & \text{on }\pa D,
\end{cases}
\]
respectively. Then, there is a constant $C>0$, depending only on
$r=\frac{1}{3}dist(\Om,\partial{D})$, such that
\begin{eqnarray*}
w(x)\leq C\inf_{y\in B_{r}(x)}w(y)+Cw_{1}(x)+Cw_{2}(x), &  & \forall x\in\Omega.
\end{eqnarray*}

\end{prop}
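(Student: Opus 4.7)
\smallskip

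\noindent\textbf{Proof proposal for Proposition \ref{prop: C.1}.} The nonlinearity of $-\Delta_p$ prevents one from writing $w=w_1+w_2$, so linear superposition is unavailable. My plan is to go through the pointwise Wolff potential estimates of Kilpel\"ainen--Mal\'y, which replace linear superposition by an estimate of comparable strength at the level of potentials. Recall the Wolff potential
\[
W^{f}_{1,p}(x,R)=\int_{0}^{R}\Big(\frac{1}{t^{N-p}}\int_{B_{t}(x)}f\,dy\Big)^{\frac{1}{p-1}}\frac{dt}{t},
\]
which is exactly the quantity that appears in the statements collected in Appendix C (cf.\ also Lemma~\ref{lem: 3.2}). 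For nonnegative $f\in L^{1}(D)$, the Kilpel\"ainen--Mal\'y theorem yields both an upper bound
\[
v(x)\le C\Big(\inf_{B_{r}(x)}v+W^{f}_{1,p}(x,2r)\Big)
\]
for any $p$-superharmonic $v\ge 0$ satisfying $-\Delta_p v=f$ on a ball $B_{3r}(x)\subset D$, and a lower bound
\[
C^{-1}\,W^{f}_{1,p}(x,r)\le v(x)
\]
for the Dirichlet solution. I would invoke the versions of these two inequalities that are explicitly recorded in Appendix C of the paper (the same ones used in Section~3).

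The forward calculation is then short. Fix $x\in\Omega$; by the choice of $r=\tfrac13\dist(\Omega,\partial D)$, the ball $B_{3r}(x)$ sits inside $D$. Applying the upper estimate to $w$ with source $f_{1}+f_{2}$ gives
\[
w(x)\le C\inf_{B_{r}(x)}w+C\,W^{f_{1}+f_{2}}_{1,p}(x,2r).
\]
Next I use the elementary subadditivity $(a+b)^{1/(p-1)}\le C_{p}(a^{1/(p-1)}+b^{1/(p-1)})$ inside the defining integral of the Wolff potential to obtain
\[
W^{f_{1}+f_{2}}_{1,p}(x,2r)\le C_{p}\bigl(W^{f_{1}}_{1,p}(x,2r)+W^{f_{2}}_{1,p}(x,2r)\bigr).
\]
Finally, applying the lower Wolff potential estimate to each Dirichlet solution $w_{i}$, we get $W^{f_{i}}_{1,p}(x,2r)\le C\,w_{i}(x)$ for $i=1,2$. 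Chaining these three inequalities yields
\[
w(x)\le C\inf_{B_{r}(x)}w+C w_{1}(x)+C w_{2}(x),
\]
which is the desired bound.

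The main obstacle, and the content hidden behind the ``invoke Appendix~C'' step, is precisely the Wolff potential machinery itself: the upper bound requires a supersolution argument combined with a Caccioppoli-type inequality and a careful De~Giorgi-level iteration, while the lower bound uses a comparison with radial supersolutions near the pole. These are nontrivial but now standard; once they are in hand, the superposition-type inequality reduces to the two lines of splitting and reassembly above. One minor technical point is to make sure that the radius used in the lower estimate for $w_{i}$ matches the one appearing in the upper estimate for $w$, which is exactly why the radius $r=\tfrac13\dist(\Omega,\partial D)$ is chosen so that $B_{3r}(x)\subset D$ for every $x\in\Omega$.
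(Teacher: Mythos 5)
Your strategy is the right one, and it is in fact how this result is established in the source the paper relies on: the paper gives no proof of Proposition \ref{prop: C.1} at all (Appendix C explicitly copies it from \cite[Lemma 2.2]{CaoPengYan2012} without proof), and the proof there is precisely the combination of the two-sided Kilpel\"ainen--Mal\'y Wolff potential estimates with the elementary inequality $(a+b)^{1/(p-1)}\le C_p\bigl(a^{1/(p-1)}+b^{1/(p-1)}\bigr)$ applied inside the potential. One inaccuracy of attribution: neither of the two inequalities you need is actually recorded in Appendix C. Proposition \ref{prop: C.2} is only an upper bound for a $\gamma$-average by the Wolff potential truncated from below at radius $r$, and the pointwise lower bound $W^{f_i}_{1,p}(x,\cdot)\le C\,w_i(x)$ appears nowhere in the paper; both estimates must be imported directly from Kilpel\"ainen--Mal\'y or Trudinger--Wang.

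The one substantive issue is the radius mismatch, which you flag but do not actually resolve. The upper estimate produces $W^{f_1+f_2}_{1,p}(x,2r)$, whereas for a nonnegative $p$-superharmonic function on $B_{3r}(x)$ the lower estimate only gives $W^{f_i}_{1,p}(x,r)\le C\,w_i(x)$; the inclusion $B_{3r}(x)\subset D$ does not upgrade this to radius $2r$. Indeed, the dyadic block $\bigl(r^{p-N}\int_{B_{2r}(x)}f_i\,dy\bigr)^{1/(p-1)}$ of $W^{f_i}_{1,p}(x,2r)$ is controlled by $\inf_{B_{2r}(x)}w_i$ only when $w_i$ is a supersolution on a ball of radius roughly $4r$ about $x$, and the tail $\int_r^{2r}(\cdots)\,dt/t$ is genuinely not dominated by $W^{f_i}_{1,p}(x,r)$. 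The standard repair is to apply the upper estimate at scale $r/2$, giving $w(x)\le C\inf_{B_{r/2}(x)}w+C\,W^{f_1+f_2}_{1,p}(x,r)$ so that the potentials on both sides match, and then to convert $\inf_{B_{r/2}(x)}w$ into $C\inf_{B_{r}(x)}w$ via the weak Harnack inequality for the nonnegative supersolution $w$ on $B_{3r}(x)$, namely $\inf_{B_{r/2}(x)}w\le\bigl(\fint_{B_{r/2}(x)}w^{\gamma}\bigr)^{1/\gamma}\le C\bigl(\fint_{B_{2r}(x)}w^{\gamma}\bigr)^{1/\gamma}\le C\inf_{B_{r}(x)}w$. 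With this adjustment, or any equivalent rescaling of the radii in the two Kilpel\"ainen--Mal\'y inequalities, your argument is complete.
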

Next result gives an estimate for solutions of $p$-Laplacian equation
by Wolff potential.
\begin{prop}
\label{prop: C.2}(\cite[Proposition C.1]{CaoPengYan2012}) There
is a constant $\ga\in(p-1,(p-1)N/(N-p+1))$, such that for any solution
$u\in W^{1,p}(D)\cap L^{\wq}(D)$ to equation
\begin{eqnarray*}
-\De_{p}u=f, &  & \text{in }D,
\end{eqnarray*}
where $f\in L^{1}(D)$, $f\ge0$, there exists a constant $C=C(N,p,\ga)>0$,
such that for any $x\in D$ and $r\in(0,\dist(x,\pa D))$,

\[
\left(\fint_{B_{r}(x)}u^{\ga}dy\right)^{\frac{1}{\ga}}\le C+C\int_{r}^{\dist(x,\pa D)}\left(\frac{1}{t^{N-p}}\int_{B_{t}(x)}fdy\right)^{\frac{1}{p-1}}\frac{dt}{t}.
\]

\end{prop}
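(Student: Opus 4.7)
The plan is to derive this local Wolff potential bound by combining a weak Harnack inequality for $u$ with a dyadic potential-theoretic reduction, adapting ideas from Kilpel\"ainen--Mal\'y and Trudinger--Wang nonlinear potential theory. Since $u\ge 0$ is a supersolution of $-\Delta_p u = f\ge 0$, a standard Caccioppoli plus Moser iteration --- essentially the scheme used in the proof of Proposition \ref{prop: A.1} above --- yields, for every ball $B_{2\rho}(y)\subset D$ and every admissible $\gamma\in(p-1,(p-1)N/(N-p+1))$, the single-scale weak Harnack inequality
\[
\Bigl(\fint_{B_\rho(y)} u^\gamma\,dz\Bigr)^{1/\gamma} \le C\inf_{B_\rho(y)} u + C\Bigl(\rho^p \fint_{B_{2\rho}(y)} f\,dz\Bigr)^{1/(p-1)}.
\]
The upper endpoint of the admissible range of $\gamma$ is the Sobolev conjugate of $p-1$ and reflects that the natural local integrability for such supersolutions is at the level of $|\nabla u|^{p-1}$ rather than $|\nabla u|^p$; this is what makes a Wolff-potential estimate (rather than a Riesz-potential estimate) the correct object.

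Next I would handle the infimum term by invoking the Kilpel\"ainen--Mal\'y pointwise Wolff potential estimate: for a.e.\ $y\in B_r(x)$ and with $R=\dist(x,\partial D)$,
\[
u(y) \le C\inf_{B_R(x)} u + C\int_0^{2R}\Bigl(\frac{1}{t^{N-p}}\int_{B_t(y)} f\,dz\Bigr)^{1/(p-1)}\frac{dt}{t}.
\]
The hypothesis $u\in L^\infty(D)$ absorbs $\inf_{B_R(x)} u$ into a constant. Splitting the Wolff integral at scale $r$, the tail piece $t\in[r,2R]$ is converted to the integral appearing on the right-hand side of the statement via the inclusion $B_t(y)\subset B_{2t}(x)$, valid for $y\in B_r(x)$ and $t\ge r$, followed by a trivial change of variable. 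The short-scale piece $t\in(0,r)$ is uniformly bounded in $y\in B_r(x)$ by a constant, either through the reverse Kilpel\"ainen--Mal\'y lower bound for supersolutions (which dominates this integral by $C u(y) \le C\|u\|_\infty$) or directly by testing the equation against a bump supported on $B_{2r}(x)$. Taking the infimum over $y$ then gives $\inf_{B_r(x)} u \le C + C\int_r^R(\cdots)^{1/(p-1)}\,dt/t$, and substituting this into the weak Harnack of the first paragraph closes the argument; the leftover single-scale term $(r^p\fint_{B_{2r}(x)}f)^{1/(p-1)}$ is dominated by the $[r,2r]$ window of the Wolff integral and is absorbed.

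The main obstacle is the Kilpel\"ainen--Mal\'y pointwise estimate invoked in the second paragraph: its proof demands a delicate dyadic iteration based on a Caccioppoli inequality at each scale together with comparison to the $p$-harmonic replacement, arranged so that the resulting geometric series can be identified with the Wolff integral without amplification of constants. A naive attempt to iterate the weak Harnack of the first paragraph instead produces geometric blow-up of the Harnack constant $C>1$ across dyadic scales and does not close; the Kilpel\"ainen--Mal\'y input is precisely what circumvents this obstruction. Once that classical pointwise potential estimate is in hand, the remainder of the argument reduces to bookkeeping: comparing a dyadic sum with a Riemann integral and absorbing constant terms.
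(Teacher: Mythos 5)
First, a remark on the comparison you asked for: the paper offers no proof of this proposition at all --- it is quoted verbatim from \cite[Proposition C.1]{CaoPengYan2012} ``without proof'', and the argument there runs a Kilpel\"ainen--Mal\'y type dyadic iteration on the infima $\inf_{B_{2^{-j}R}(x)}u$ between the scales $r$ and $R=\dist(x,\pa D)$, rather than invoking the pointwise Wolff-potential bound at every point of $B_r(x)$ as you do.

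Your skeleton (single-scale weak Harnack, a Wolff-potential control of the infimum, and the correct observation that naive iteration of the weak Harnack constant blows up geometrically) is sound, but the execution has a gap that trivializes the result: at two places you absorb terms into the constant using $\|u\|_{L^\infty(D)}$ --- once for $\inf_{B_R(x)}u$, and once for the short-scale Wolff potential via the lower Kilpel\"ainen--Mal\'y bound $W\le Cu(y)\le C\|u\|_\infty$. A constant depending on $\|u\|_{L^\infty(D)}$ proves nothing, since the left-hand side is trivially at most $\|u\|_\infty$; and it is useless for the paper, because Lemma \ref{lem: 3.2} applies the proposition to $w_n$ with $\|w_n\|_{L^\infty}\to\infty$ (the $L^\infty$ hypothesis is purely qualitative, to legitimize test functions). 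Your fallback for the short-scale piece also fails: for general $f\in L^1$ the integral $\int_0^r(t^{p-N}\int_{B_t(y)}f\,dz)^{1/(p-1)}\frac{dt}{t}$ need not even be finite at every $y$, so it is not ``uniformly bounded in $y$ by a constant'', and testing against a bump supported on $B_{2r}(x)$ controls only one dyadic scale, not the sum over all $t<r$. The repairs are: (i) bound $\inf_{B_R(x)}u\le\bigl(\fint_{B_R(x)}u^\gamma\,dz\bigr)^{1/\gamma}$, which is controlled by $\|u\|_{W^{1,p}(D)}$ together with $R\ge\dist(\Om,\pa D)>0$ --- this is where the ``$+C$'' actually comes from; (ii) do not apply the pointwise Kilpel\"ainen--Mal\'y bound at each $y$, but rather the truncated form of its iteration, $\inf_{B_r(x)}u\le C\inf_{B_R(x)}u+C\int_r^{2R}(t^{p-N}\int_{B_t(x)}f\,dz)^{1/(p-1)}\frac{dt}{t}$, which is exactly what the dyadic scheme yields before letting $r\to0$ and which never produces a short-scale piece. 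If you insist on the pointwise route, you must instead supply the $L^\gamma(B_r(x))$ bound for the truncated Wolff potential $W^f(\cdot;0,r)$ in terms of $(r^{p-N}\int_{B_{2r}(x)}f\,dz)^{1/(p-1)}$ (valid for $\gamma<N(p-1)/(N-p)$), an additional nontrivial estimate your proposal does not provide.
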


\section{Global compactness result}

Recall that by (\ref{def: scaling transform}) we define, for any
function $u,$
\[
\rho_{x,\la}(u)=\la^{\frac{N-p}{p}}u(\la(\cdot-x))
\]
 for any $\la>0$ and $x\in\R^{N}$. In this section, we give a global
compactness result in the following proposition.
\begin{prop}
\label{prop:D.1} Let $u_{n}$, $n=1,2,\ldots,$ be a solution of
equation  \eqref{1.3} with $\epsilon=\epsilon_{n}\rightarrow0,$
satisfying $\|u_{n}\|\leq C$ for some constant $C$ independent of
$n$. Then $u_{n}$ can be decomposed as
\[
u_{n}=u_{0}+\sum_{j=1}^{k}\rho_{x_{n,j},\lambda_{n,j}}(U_{j})+\sum_{j=k+1}^{m}\rho_{0,\lambda_{n,j}}(U_{j})+\omega_{n},
\]
where $u_{0}$ is a solution for \eqref{1.1},  $\omega_{n}\rightarrow0$
strongly in $W_{0}^{1,p}(\Omega)$, $x_{n,j}\in\Omega$. And as $n\to\infty$,
$\lambda_{n,j}\rightarrow\infty$ for all $1\le j\le m$, $\lambda_{n,j}d(x_{n,j},\partial\Omega)\rightarrow\infty$
for $j=1,\cdots,k.$

For $j=1,2,\cdot\cdot\cdot,k,$ $U_{j}$ is a solution of
\[
\begin{cases}
-\Delta_{p}u=b_{j}\mu|u|^{p^{*}-2}u, & \text{in }\R^{N},\\
u\in{\cal D}^{1,p}(\R^{N}),
\end{cases}
\]
 for some $b_{j}\in(0,1].$

For $j=k+1,k+2,\cdot\cdot\cdot,m,$ $U_{j}$ is a solution of
\[
\begin{cases}
-\Delta_{p}u=b_{j}\mu|u|^{p^{*}-2}u+b_{j}\frac{|u|^{p^{*}(s)-2}u}{|x|^{s}}, & \text{in }\R_{\ast}^{N},\\
u\in{\cal D}_{0}^{1,p}(\R_{\ast}^{N}),
\end{cases}
\]
 for some $b_{j}\in(0,1],$ where $\R_{*}^{N}=\R^{N}$ if $0\in\Omega,$
while $\R_{*}^{N}=\R_{+}^{N}$ if $0\in\partial\Omega.$

Moreover, set $x_{n,i}=0$ for $i=k+1,\cdots,m.$ For $i,j=1,2,\cdot\cdot\cdot,m,$
if $i\neq j$, then
\[
\frac{\lambda_{n,j}}{\lambda_{n,i}}+\frac{\lambda_{n,i}}{\lambda_{n,j}}+\lambda_{n,j}\lambda_{n,i}|x_{n,i}-x_{n,j}|^{2}\rightarrow\infty
\]
as $n\rightarrow\infty$.\end{prop}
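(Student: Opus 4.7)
The plan is to carry out a Struwe-type iterative bubble extraction, adapted from the semilinear work to the quasilinear setting with double critical growth. Since $\|u_n\| \le C$, pass to a subsequence so that $u_n \rightharpoonup u_0$ weakly in $W_0^{1,p}(\Om)$, $u_n \to u_0$ a.e.\ and strongly in $L^r(\Om)$ for every $r<p^*$. Testing equation~\eqref{1.3} against $\varphi \in C_0^\infty(\Om)$ and using the $S^+$-property of the $p$-Laplacian on bounded sequences together with the Brezis--Lieb lemma applied to the two critical terms (and the fact that $\ep_n \to 0$) shows $u_0$ solves \eqref{1.1}. Set $v_n^{(1)} = u_n - u_0$; this sequence is still bounded in $W_0^{1,p}(\Om)$, converges weakly to $0$, and by Brezis--Lieb its critical energy splits additively from that of $u_0$.

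Now iterate. Suppose $v_n^{(i)} \not\to 0$ strongly. The idea is to locate a profile: using Lions' concentration-compactness principle for the measures $|\nabla v_n^{(i)}|^p dx$, $|v_n^{(i)}|^{p^*}dx$ and $|v_n^{(i)}|^{p^*(s)}|x|^{-s}dx$, weak convergence to zero together with the Sobolev and Caffarelli--Kohn--Nirenberg inequalities forces either a Dirac-type concentration or vanishing; vanishing is ruled out because $v_n^{(i)}$ solves an equation whose right-hand side would then vanish in $L^{(p^*)'}$, giving strong convergence to $0$. So there are $x_{n,j} \in \bar\Om$ and scales $\la_{n,j} \to \infty$ such that the rescaled sequence $w_n = \la_{n,j}^{-(N-p)/p}\, v_n^{(i)}\bigl(x_{n,j} + \cdot/\la_{n,j}\bigr)$, properly cut off, converges weakly to a nontrivial limit $U_j$. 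There are two cases according to whether $\la_{n,j}|x_{n,j}|$ stays bounded or tends to infinity: in the first case the rescaling keeps the singular point $0$ visible and the limit $U_j$ satisfies the double-critical equation on $\R^N$ (if $0\in\Om$) or $\R^N_+$ (if $0\in\pa\Om$, after flattening $\pa\Om$ and using that $\Om\in C^1$ makes the rescaled domains exhaust the half-space); in the second case the singular term scales out and $U_j$ solves the pure critical $p$-Laplace equation on $\R^N$. Also $\la_{n,j}\dist(x_{n,j},\pa\Om)\to\infty$ in the latter case, otherwise one would again land in a half-space limit. The coefficient $b_j \in (0,1]$ arises from the scaling of $a(x)|u|^{p-2}u$, which disappears in the limit, while the factors $\mu$ and $|x|^{-s}$ acquire coefficients bounded by $1$ after the blow-up.

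Set $v_n^{(i+1)} = v_n^{(i)} - \rho_{x_{n,j},\la_{n,j}}(U_j)$. Using Brezis--Lieb on both critical quantities, together with the continuity of the $p$-Laplace energy with respect to such concentrating profiles (a standard computation, see e.g.\ the Cao--Peng--Yan argument), gives
\[
\|v_n^{(i+1)}\|^p = \|v_n^{(i)}\|^p - \|U_j\|_{\mathcal{D}^{1,p}}^p + o(1),
\]
so each extracted bubble removes a quantum of energy bounded below by the infimum of the $\mathcal{D}^{1,p}$-norm over nontrivial solutions of the limit equation (which is strictly positive by a Pohozaev/Sobolev argument). Hence the procedure terminates after finitely many steps, producing the decomposition with $k$ interior bubbles listed first and $m-k$ bubbles centered at $0$.

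The separation relation
\[
\frac{\la_{n,j}}{\la_{n,i}}+\frac{\la_{n,i}}{\la_{n,j}}+\la_{n,j}\la_{n,i}|x_{n,i}-x_{n,j}|^2 \to \infty \quad (i\ne j)
\]
is extracted at each extraction: if it failed for two indices $i\ne j$, one could rewrite $\rho_{x_{n,j},\la_{n,j}}(U_j)$ in the frame of $\rho_{x_{n,i},\la_{n,i}}$ and recognize a nontrivial weak limit already contained in the previous profile, contradicting the construction of $U_j$ as the weak limit of the $(i+1)$-st rescaling. The main technical obstacle is the boundary case where $x_{n,j}\to 0\in\pa\Om$: one must simultaneously straighten $\pa\Om$ in a $C^1$ chart and blow up, and verify that the rescaled domains converge to a half-space in the sense that both weak limits and energies behave as on $\R^N_+$; this is where the $C^1$ regularity of $\pa\Om$ is essential. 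A secondary difficulty, absent in the Hilbert case, is that Brezis--Lieb for the $p$-Laplace energy requires almost everywhere convergence of the gradients, which one gets from the Boccardo--Murat type argument, or alternatively by working directly with the decomposition at the level of the gradient norms and using the $S^+$ property, as in \cite{CaoPengYan2012}.
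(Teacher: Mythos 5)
Your outline is the same Struwe--Yan global compactness scheme that the paper itself relies on: the paper gives no proof of Proposition \ref{prop:D.1}, saying only that it is ``similar to'' \cite{CaoPengYan2012,CaoYan2010,YanSS1995}, and your iterative extraction (weak limit solves the limit problem, rescaling at concentration points, energy quantization, a.e.\ gradient convergence for the Br\'ezis--Lieb step, dichotomy according to whether $\la_{n,j}|x_{n,j}|$ stays bounded) is precisely that scheme. Two points in your sketch are, however, misstated or left unjustified. First, the coefficient $b_{j}$ does not come from the term $a(x)|u|^{p-2}u$ nor from ``$\mu$ and $|x|^{-s}$ acquiring coefficients'': it comes from the subcritical perturbation $\ep_{n}$ interacting with the dilation. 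The rescaled equation carries the factor $\la_{n,j}^{-\ep_{n}\frac{N-p}{p}}$ in front of the critical terms (and its analogue for the Hardy term), and $b_{j}$ is its limit along a subsequence; one must also argue that $b_{j}\neq0$, since otherwise the limit equation degenerates to $-\De_{p}U_{j}=0$ and $U_{j}$ would be trivial, contradicting the nonvanishing of the extracted profile. Second, you assert that $\la_{n,j}\,d(x_{n,j},\pa\Om)\to\infty$ for $j\le k$ ``otherwise one would again land in a half-space limit,'' but landing in a half-space limit is not by itself a contradiction: what excludes it for the bubbles with $\la_{n,j}|x_{n,j}|\to\infty$ is the Pohozaev/Esteban--Lions-type nonexistence of nontrivial solutions of $-\De_{p}u=b\mu|u|^{p^{*}-2}u$ in ${\cal D}_{0}^{1,p}(\R_{+}^{N})$. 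This nonexistence is exactly why only whole-space profiles occur for $j=1,\dots,k$, while half-space profiles survive only for the bubbles anchored at $0\in\pa\Om$, where the Hardy weight destroys the translation invariance used in that argument. With these two repairs your argument is the standard one that the cited references carry out.
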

\begin{proof}
The proof is similar to \cite{CaoPengYan2012,CaoYan2010,YanSS1995}
and we omit the details.
\end{proof}
\emph{Acknowledgments. }The authors would like to thank Prof. Xiao
Zhong for his guidance in the preparation of this paper. The first
author is partially supported by NSFC (No.11301204; No. 11371159;
No.11101171) and self-determined research funds of Central China Normal
University from the colleges basic research and operation of MOE(CCNU14A05036).
The second author is financially supported by the Academy of Finland,
project 259224.

\end{document}